\documentclass[12pt]{amsart}
\usepackage{tikz}
\usetikzlibrary{arrows.meta}
\usepackage{amssymb, a4, mathrsfs}
\usepackage{cancel}
\newtheorem{them}{Theorem}[section]

\newtheorem{prop}[them]{Proposition}
\newtheorem{lem}[them]{Lemma}
\newtheorem{cor}[them]{Corollary}

\theoremstyle{definition}

\newtheorem{rem}[them]{Remark}
\newtheorem{ex}[them]{Example}
\newtheorem{defn}[them]{Definition}

\newtheorem{question}[them]{Question}
\usepackage{pstricks,pst-node,pst-plot}
\usepackage{verbatim}
\usepackage{pgf}
\usepackage{pifont}
\usepackage{bbding}
\usetikzlibrary{arrows,automata}
\usepackage[latin1]{inputenc}
\usepackage{graphicx,amscd}
\usepackage{CJK}
\usepackage{indentfirst}
\theoremstyle{definition}

\numberwithin{them}{section}
\numberwithin{equation}{section}
\newcommand{\QP}{\mbox{$\mathcal{P}$}}

\newcommand{\QQ}{\mbox{$\mathcal{Q}_\ell$}}

\newcommand{\ar}{\mathcal{R}}

\newcommand{\X}{\mathcal{X}}

\newcommand{\Y}{\mathcal{Y}}

\newcommand{\el}{\mathcal{L}}

\newcommand{\art}{\widetilde{\mathcal{R}}}

\newcommand{\elt}{\widetilde{\mathcal{L}}}

\DeclareMathOperator{\FLA}{FLA}

\begin{document}
\title{Left Ehresmann monoids with a proper basis}

\author{Gracinda M.S. Gomes}

\address{Departamento de Ci\^{e}ncias Matem\'{a}ticas \\ Faculdade de Ci\^{e}ncias\\ Universidade de Lisboa\\ 1749-016\\ Lisboa\\ Portugal}

\email{gmcunha@fc.ul.pt}

\author{Victoria Gould}

\address{Department of Mathematics\\University   of York\\Heslington\\York YO10 5GH\\UK}
\email{victoria.gould@york.ac.uk}

\author{Yanhui Wang}
\address{College of Mathematics and Systems Science\\ Shangdong University of Science and Technology\\ Qingdao\\ 266590\\ PR China}

\email{yanhuiwang@sdust.edu.cn}
\keywords{Left Ehresmann monoid, decomposition, proper basis, semidirect product}
\subjclass[2020]{20M10, 20M30}

\maketitle
\begin{abstract}

Left Ehresmann monoids, and their two-sided counterpart of Ehresmann monoids, were so named by Lawson, who elucidated their connection  to  the work of Ehresmann in  differential geometry. This article is dedicated to 
building a theory for  left Ehresmann monoids inspired by  that  for inverse semigroups; in  order to do so we must develop substantially different  ideas and techniques.

It is known that every left Ehresmann monoid has a cover, that is, a projection separating preimage,  of the form $\mathcal{P}_{\ell}(T,X)$, where  $\mathcal{P}_{\ell}(T,X)$ is a left Ehresmann monoid  constructed from a monoid $T$ and an order-preserving action of $T$ on a semilattice $X$ with identity.
We introduce the notion of a proper basis, and show that $\mathcal{P}_{\ell}(T,X)$, and consequently any free left Ehresmann monoid, possesses a proper basis. We show that any left Ehresmann monoid with a proper basis displays properties close to those of two-sided Ehresmann monoids.
Next, we exhibit a class of  subsemigroups $\mathcal{Q}_{\ell}(T,X,Y)$ (properly, biunary monoid subsemigroups) of the monoids $\mathcal{P}_{\ell}(T,X)$,  which are also  left Ehresmann  with a proper basis. We  prove that any left Ehresmann monoid with a proper basis is isomorphic to some $\mathcal{Q}_{\ell}(T,X,Y)$.

 Our results can be regarded as being analogous to those for proper inverse semigroups, due to McAlister and O'Carroll, the $\mathcal{Q}_{\ell}(T,X,Y)$ playing the role of the $P$-semigroups and the $\mathcal{P}_{\ell}(T,X)$ the role of the semidirect products of a semilattice by a group.
 In the process of proving our main theorems we  present a globalisation result for an order-preserving  partial action of a monoid on a partially ordered set or  semilattice. 
 
\end{abstract}

\section{Introduction}

 A major thrust of algebraic semigroup theory is the study of semigroups determined by properties of their idempotents. This article is a contribution to that area. Historically, one of  the first major suite of results in this direction was for inverse semigroups, where a semigroup $S$ is {\em inverse} if for any element $a$ in $S$ there is a unique element of $S$, denoted by $a^{-1}$, such that $a=aa^{-1}a$ and $a^{-1}=a^{-1}aa^{-1}$. Equivalently, a semigroup $S$ is inverse if for each $a\in S$ there is at least one such $a^{-1}$, that is, $S$ is {\em regular}, and the idempotents  of $S$ commute. The latter condition results in   $E(S)=\{ e\in S:e^2=e\}$ being a commutative semigroup of idempotents, that is, {\em semilattice}; the word semilattice is used since $E(S)$ may equivalently be viewed as a partially ordered set with meets.  The founding results of 
McAlister \cite{M1,M2} and O'Carroll \cite{O} together show that every inverse semigroup $S$ is in some sense close to a semidirect product $X\rtimes G$ of a semilattice $X$ acted on by a group $G$ via {\em morphisms}.
 Specifically, every inverse semigroup has a {\em cover}, that is,  an idempotent separating pre-image, which is {\em proper}. We do not give the technical definition of proper here, since an inverse semigroup is proper precisely when it has the structure of a so-called P-semigroup  precisely when it embeds into  a semidirect product of a group with a semilattice. Consequently,  every inverse semigroup is close to one that is  determined by two coordinates - one from a semilattice and one from a group.  Any free inverse semigroup is proper; here the group $G$ is a free group and the semilattice consists of finite connected subgraphs of its Cayley graph \cite{munn:72}.  These results have had lasting consequences for inverse semigroups and have motivated many extensions to  broader classes consisting of semigroups having a semilattice of idempotents, but which need not be regular. These include  (left) restriction and (left) ample semigroups \cite{F3, FGG, CG, kudryavtseva:2015, lawson:86}: without the involution $^{-1}$ the theory splits into one- and two-sided cases. Descriptions of free objects may here be formulated in terms of  Cayley graphs of free monoids; these descriptions, and other structural results, may be correspondingly phrased in terms of  semidirect products $X\rtimes M$ where here $M$ is a monoid.

This paper is concerned with left Ehresmann semigroups.   The classes of left  Ehresmann semigroups and their two-sided counterpart of Ehresmann semigroups have become a topic of recent interest from several quite different viewpoints \cite{EG, MS, stokes:21, L, KL}.  (Left) Ehresmann  semigroups form a variety of algebras possessing (one) two basic unary operations, in addition to the semigroup binary operation; that is, they are (unary) biunary semigroups. The role of the unary operations is to pick out special idempotents, the set of which form a semilattice of {\em projections}. They  first appear in the semigroup literature in a seminal paper of Lawson \cite{lawson:1991},  who elucidates the way they arise  naturally from the categories appearing in the work of Ehresmann  in differential geometry; essentially, the unary operations correspond to the domain and range in an associated category. The class of (left) Ehresmann semigroups  includes many natural examples, and strictly encompasses those classes just mentioned. In particular, the monoid of binary relations on any set is Ehresmann, but yet it does not belong to any of the classes mentioned, all of which have good representations by partial maps. But relations do not behave like maps. Consequently, the study of (left) Ehresmann semigroups requires substantial new ideas and a paradigm shift from techniques used in the former cases. The technical reason is due to the fact that certain identities - the `ample identities', which are intimately related to semidirect products and the way partial maps compose, do not hold. So here free (left) Ehresmann semigroups are described using  labelled trees \cite{K1, K2} and  semidirect products  are replaced by what may be thought of as iterated versions \cite{BGG, BGG2, BGGW}. In the left Ehresmann case, this iterated notion of a semidirect product, denoted $\mathcal{P}_{\ell}(T,X)$, is obtained from a monoid $T$ acting  on a semilattice $X$ by {\em order-preserving} maps (not necessarily morphisms).
We observe that the (left) adequate semigroups introduced by Fountain \cite{F1} are (left) Ehresmann, and form a quasi-variety of (unary) biunary semigroups, which generate the class of (left) Ehresmann semigroups. Thus the recently introduced Pretzel Monoids of \cite{HKS} are left Ehresmann.

It was known that any left Ehresmann monoid 
has a cover of the form $\mathcal{P}_{\ell}(T,X)$ \cite{GG}. What was missing in completing a theory reflecting the classic theory in the inverse case was an  analogue of the notion of a P-semigroup capturing the abstract notion of proper for inverse semigroups.  The aim of this paper is to fill this gap. 
 For technical reasons  we focus on  monoids, commenting on the semigroup case at the end of the article.   The first step is to show that any monoid of the form $\mathcal{P}_{\ell}(T,X)$ may be endowed with a second unary operation, with respect to which it is close to being Ehresmann; it belongs therefore to a class of biunary monoids we call {\em $*$-left Ehresmann}. The next  step is to show that $\mathcal{P}_{\ell}(T,X)$  has what we call a {\em proper basis}. The elements of a proper basis may be determined by two coordinates, in a way analogous to that for {\em any} element of a proper inverse semigroup. One coordinate is determined by the class of the congruence $\sigma$,
 where $\sigma$ is the least congruence identifying the projections.  What surprised us is that the second coordinate is determined by the `new' unary operation that comes into play, and not the original one given by the left  Ehresmann structure. The basis notion comes from a decomposition of elements of $\mathcal{P}_{\ell}(T,X)$ into unique  forms. This leads us to introduce monoids of the form $\QQ(T,X,Y)$ which may be regarded as 
analogues of P-semigroups. Monoids  $\QQ(T,X,Y)$ are obtained from monoids  $\mathcal{P}_{\ell}(T,X)$ by restricting the action of $T$ on $X$ to a subsemilattice $Y$. Our main result (Theorem~\ref{them:conclusion}) states, in brief:

\vspace{0.5cm}

\noindent{\bf Main Theorem} {\em Let $Q$ be a biunary monoid. Then $Q$ is a left Ehresmann  monoid with a proper basis if and only if $Q$ is isomorphic to some 
$\QQ(T,X,Y)$.}

\vspace{0.5cm}
 Theorem~\ref{them:conclusion} is stated in terms of more technical conditions, for which the motivation and terminology will be addressed subsequently.

The structure of the paper is as follows. 
In Section~\ref{sec:pre} we give the necessary background for monoids and  (left) Ehresmann monoids, including enlarging upon  the notion of $T$-normal form for a left Ehresmann monoid generated  by its projections and  a submonoid, first discussed in \cite{BGG}.
In Section~\ref{sec:*lE} we introduce the class of $*$-left Ehresmann monoids and show that any left Ehresmann monoid with uniqueness of $T$-normal forms is $*$-left Ehresmann.
The paper then takes a different direction in Section~\ref{sec:actions}, which concerns itself with the so-called globalisation of an order-preserving  partial action of a monoid on a poset or semilattice. Section~\ref{sec:actions} essentially stands alone. Although some of our results and  techniques are reminiscent of others in the literature, the extension that we require for our main results is significant enough to require a full exposition. 
Section~\ref{sec:H-proper} introduces the notions of 
 a $*$-subset, an atomic generating set, a   basis and a  proper basis, in a left Ehresmann monoid. We show that any left Ehresmann monoid with a basis is in fact $*$-left Ehresmann. We remark that the notion of a {\em proper element} is
introduced for (two-sided) Ehresmann semigroups in \cite{KL}; the viewpoint and approach of \cite{KL} are quite different from ours, but there are distinct similarities appearing at various stages. We comment further at the end of the paper. 
In Section~\ref{sec:Q} we show that monoids $\mathcal{P}_\ell(T,X)$ are left Ehresmann with a proper basis, and we define biunary monoid subsemigroups $\mathcal{Q}_\ell(T,X,Y)$ that share these properties.
Section~\ref{sec:structtheorem} pulls together many of the steps preceding it in the paper: starting with a  left Ehresmann monoid $Q$ with a proper basis, we show that the monoid $T=Q/\sigma$, where $\sigma$ is the least congruence on $Q$ generated by all pairs of projections from $Y$, has an order-preserving partial action on $Y$. This may therefore  be globalised to an action on a semilattice $X$. We then prove that  $Q$ is isomorphic to $\mathcal{Q}_\ell(T,X,Y)$, and hence sits inside $\mathcal{P}_\ell(T,X)$ as a biunary monoid subsemigroup. 
The final section draws together our results and finishes with 
some remarks and open questions.

\section{Background and preliminaries}\label{sec:pre}

The purpose of this section is to recall some standard notions related to monoids (Subsection~\ref{sub:monoids}); outline the varieties of unary and biunary monoids we will consider here, including left Ehresmann monoids (Subsection~\ref{sub:leftehresmann}); finally, discuss an important  notion of normal form for left Ehresmann monoids (Subsection~\ref{sub:normalforms}) and some consequences of possessing it.  
We refer the reader to \cite{H} for general semigroup background and to \cite{MMT:1987} for notions of universal algebra.  Many definitions we list have their counterparts for semigroups; since this paper is focused on monoids,  our default is to  present them in that context.

\subsection{Monoids and relations}\label{sub:monoids}

A monoid is a pair $(M,\cdot)$ where $M$ is a set and $\cdot$ is an associative binary operation such that $M$ possesses an identity element. We normally refer to the monoid simply as $M$, write $ab$ for $a\cdot b$, and $1$ or $1_M$ for the identity. As a (universal)  algebra, $M$ has signature $(2,0)$. We also consider unary and biunary monoids, which are monoids possessing one or two additional basic unary operations. Thus, a unary monoid has signature $(2,1,0)$ and a biunary monoid has signature $(2,1,1,0)$. Semigroups, unary and biunary semigroups, are defined similarly without requiring the existence of an identity;  we make the convention that a semigroup is non-empty. They are therefore algebras with signature $(2)$, $(2,1)$ or $(2,1,1)$, respectively. 

If $\rho$ is a (binary) relation on a set $X$, we  may write $x\, \rho\,
y$ to indicate that $(x, y) \in \rho$.    A relation $\rho$ on $X$ is an {\it equivalence relation} if it is reflexive, symmetric and transitive. If $\rho$ is an equivalence relation on $X$, then for $x\in X$ the set  $x\rho=\{ y\in X:x\,\rho\, y\}$ is the {\it $\rho$-class} or {\it equivalence class} of $x$ and we may write $x\rho$ as $[x]$. The set of all $\rho$-classes of $X$ is said to be the {\it quotient set of $X$ by $\rho$} and is denoted by $X/\rho$.

 A relation $\leq$ on $X$ is a {\it pre-order} on $X$ if it is reflexive and transitive, and  is a {\it partial order} if, in addition, it is anti-symmetric. A {\it partially ordered set} 
 or {\em poset} $(X, \leq)$ is a set $X$ together with a partial order $\leq$ on $X$; 
 we may abbreviate $(X,\leq)$ by $X$. If $x,y\in X$ possess a greatest lower bound, then this is unique and we refer to it as the {\em meet}
 $x\wedge y$ of $x$ and $y$. From a pre-order $\leq$ on a set $X$ we may construct an equivalence relation $\equiv$ on $X$ by the rule that $x\,\equiv\, y$ if and only if $x\leq y\leq x$. Then $X/\equiv$ is partially ordered by a relation, also denoted by $\leq$, where $\leq$ is defined by the rule that $[x]\leq [y]$ if and only if $x\leq y$. A subset $Y$ of a poset $X$  is an {\it order ideal} of $X$ if for all $x \in X$ and $y \in Y$  with $x \leq y$  we have that $x \in Y$.

We refer to a commutative semigroup of idempotents as a {\em semilattice}. We denote the set of idempotents of a monoid 
$M$  by $E(M)$. If $E\subseteq E(M)$ is a semilattice then we will say simply that $E$ is a {\em semilattice in $M$.} If $ef=fe$ for all $e,f\in E(M)$ then  $E(M)$ is a semilattice in $M$, and the largest such. Any semilattice may be partially ordered by the rule that $e\leq f$ if and only if $ef=e$; in this case, $ef$ is the  meet $e\wedge f$ of $e$ and $f$. On the other hand, if a poset $X$ is such that $x\wedge y$ exists for any pair of elements $x,y\in X$, then $(X,\wedge)$ is a semilattice and $x\leq y$ if and only if $x\wedge y=x$. With this in mind, we pass freely between semilattices and posets  with meets.

A {\em congruence} $\rho$ on a monoid $M$ is an equivalence relation $\rho$ such that for all $a,b,c,d\in M$, if $a\,\rho\, b$ and $c\,\rho \, d$ then $ac\,\rho\, bd$. If $\rho$ is a congruence on $M$ then $M/\rho$ becomes a {\em quotient} semigroup under the rule $[a][b]=[ab]$. 

\begin{defn}\label{defn:sigma} Let $M$ be a monoid and let $E\subseteq E(M)$. Then $\sigma_E$ denotes the least  congruence  on $M$ that contains $E\times E$. 
    \end{defn}

Following \cite[Proposition 1.5.9]{H}, we  observe that for any $a, b \in M$ we have  $a\ \sigma_E\ b$ if and only if $a = b$ or there exists  $n\in\mathbb{N}$ and a sequence\[a = c_1e_1d_1, \,c_1f_1d_1=c_2e_2d_2, \,\dots,\,  c_nf_nd_n =b\]
 where  $c_i, d_i\in M$ and  $(e_i, f_i)  \in E \times E$, for $1\leq i\leq n$.

 One can, of course, also define congruences on unary and biunary monoids, which must be compatible with the unary operation(s) in addition to the binary operation. In this article we are particularly interested in congruences identifying idempotents, and unary operations whose images are idempotents, so it will transpire that considering monoid congruences is sufficient for our purposes.

We will return to some properties of $\sigma_E$
in the next subsection.

\subsection{ Ehresmann and left Ehresmann monoids}\label{sub:leftehresmann}
We now define the classes of monoids in which our interest lies. Our approach is to consider them as  varieties of unary and biunary monoids;  there are other approaches, as we will briefly indicate. 

\begin{defn}\label{defn:lefte}
A {\em left Ehresmann monoid}
is a unary monoid $M$ satisfying the identities
\begin{equation}\label{eqn:le}
    x^{+} x=x,\, (x^+ y^+)^+ = x^+ y^+,\, x^+ y^+=y^+ x^+\mbox{ and }(xy)^+=(xy^+)^+, \end{equation} 
where  $a\mapsto a^+$  is the unary operation.\end{defn} 

It follows from Definition~\ref{defn:lefte} that the class of left Ehresmann monoids is a variety of algebras  with signature $(2,1,0)$.  Note that in a left Ehresmann monoid, we have $1^+=1$. The next lemma is essentially folklore. Since the identities for left Ehresmann monoids appear in different forms in the literature,  we  provide a brief proof for the sake of completeness.

\begin{lem}\label{lem:folk} Let $M$ be a left Ehresmann monoid. Then  $M$  satisfies the identities 
\begin{equation}\label{eqn:lem}
x^+=x^+ x^+\mbox{ and }(x^+)^+ = x^+.\end{equation}  
Consequently,  \[E=\{ a^+:a\in M\}\] is a semilattice in $M$. For any $a\in M$, the element $a^+$ is an idempotent left identity for $a$ from $E$ and is the least such under the partial order on $E$. Finally, for any $a,b,c\in M$, if $a^+=b^+$ then $(ca)^+=(cb)^+$. 
\end{lem}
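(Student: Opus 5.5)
The plan is to derive everything directly from the four identities in \eqref{eqn:le}, proving the two identities of \eqref{eqn:lem} first and then reading off the remaining claims.

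\emph{Idempotency.} Applying $x^+x=x$ to the element $x^+$ gives $(x^+)^+x^+=x^+$. To identify $(x^+)^+$ with $x^+$, put $y=1$ in the second identity $(x^+y^+)^+=x^+y^+$. We first need $1^+=1$, which follows from $1^+\cdot 1=1$ by the first identity. Hence $(x^+)^+=(x^+1^+)^+=x^+1^+=x^+$. Substituting this back into $(x^+)^+x^+=x^+$ gives $x^+x^+=x^+$. This establishes \eqref{eqn:lem}.

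\emph{$E$ is a semilattice in $M$.} Each element of $E$ is idempotent by the previous step, and elements of $E$ commute by the third identity. For closure, given $a^+,b^+\in E$, the second identity gives $a^+b^+=(a^+b^+)^+\in E$. So $E$ is a commutative semigroup of idempotents in $M$.

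\emph{$a^+$ is the least idempotent left identity for $a$ in $E$.} By the first identity, $a^+$ is a left identity for $a$, and it lies in $E$ and is idempotent. Now let $e=c^+\in E$ satisfy $ea=a$. Then $a^+=(ea)^+=(ea^+)^+$ by the fourth identity. Since $ea^+=c^+a^+\in E$ is fixed by ${}^+$, this gives $a^+=ea^+$, that is, $a^+\le e$ in $E$.

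\emph{The final claim.} Suppose $a^+=b^+$. By the fourth identity, $(ca)^+=(ca^+)^+=(cb^+)^+=(cb)^+$.

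No step is a real obstacle. The only point needing care is the order of use: $1^+=1$ must be obtained before $(x^+)^+=x^+$, and that in turn before idempotency, so that the argument does not become circular.
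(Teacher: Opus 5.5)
Your proof is correct and follows the paper's argument essentially step for step: the proof that $a^+$ is the least left identity, via $a^+=(ea)^+=(ea^+)^+=ea^+$, and the final claim, via the fourth identity, are identical to the paper's. The only difference is how you obtain $(x^+)^+=x^+$. You use $1^+=1$ and substitute $y=1$ in the second identity, whereas the paper writes $a^+=(a^+)^+a^+=((a^+)^+a^+)^+=(a^+)^+$; the paper's route does not use the identity element, so it also works for left Ehresmann semigroups.
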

\begin{proof} Let $a\in M$. From the first and second identities of (\ref{eqn:le}) we have that
\[a^+=(a^+)^+a^+=((a^+)^+a^+)^+=(a^+)^+,\]
so that the identities $x^+=x^+ x^+$ and $(x^+)^+ = x^+$ both hold and $E$ is a semilattice in $M$. Now, $a^+a=a$ and if $b^+a=a$ for some $b^+\in E$, then $a^+=(b^+a)^+=(b^+a^+)^+=b^+a^+$, using the second and fourth identities of  (\ref{eqn:le}). Thus $a^+$ is the least left identity for $a$  from $E$.  The final claim is immediate from the fourth identity of  (\ref{eqn:le}). \end{proof}

 {\em Right Ehresmann monoids} are defined dually,   so that again they form a variety of algebras with signature $(2,1,0)$. In this case 
 the unary operation is normally denoted by  $a\mapsto a^\ast$ 
and the corresponding identities are 
$xx^\ast=x$, $(x^\ast y^\ast)^\ast=x^\ast y^\ast=y^\ast x^\ast$ 
and $(xy)^\ast=(x^\ast y)^\ast$.   By duality, a  right Ehresmann monoid $M$  also  satisfies the identities $1^*=1$, $x^*=x^* x^*$ and $(x^*)^* = x^*$. It follows that  $E=\{ a^*:a\in M\}$ is a semilattice in $M$, for any $a\in M$  the element $a^*$ is the least right identity for $a$ from $E$  under the partial order on $E$, and   for any $a,b,c\in M$, if $a^*=b^*$, then $(ac)^*=(bc)^*$.

\begin{defn} \label{defn:ehresmann} An {\em Ehresmann monoid} is a biunary monoid $M$ such that $M$ is a left Ehresmann monoid under  $+$, a right Ehresmann monoid under $*$,    and satisfies the identities
\[(x^*)^+=x^*\mbox{ and }(x^+)^*=x^+.\]
\end{defn}

It follows from Definition~\ref{defn:ehresmann}  that the class of  Ehresmann monoids is a variety of algebras  with signature $(2,1,1,0)$. A consequence of
 the final set of identities is  that in an Ehresmann monoid $M$
\[\{ a^+:a\in M\}=\{ a^*:a\in M\}(=E).\] 
The next definition is therefore  unambiguous.

\begin{defn} \label{defn:proj} Let $M$ be a  (left, right) Ehresmann monoid.  The semilattice $E$ is the {\em semilattice of projections} of $M$. \end{defn}

We remark at this point that one can take a different route to defining a (left, right) Ehresmann monoid $M$, via the relations $\art_E$ and $\elt_E$, where $E$ is a semilattice in $M$. These relations are extensions of the well-known Green's relations $\ar$ and $\el$. In our context, they are respectively the kernels of the unary operations  $+$ and $*$,  respectively. For more details, we refer the reader to \cite{G}.

 The genesis of (left) Ehresmann monoids, motivated by Ehresmann's work on small ordered categories, is described in \cite{lawson:1991}. We now give some examples and special cases of (left) Ehresmann monoids.

\begin{ex}\label{ex:relations} (Binary relations and partial transformation monoids)  Let $X$ be a non-empty set and let $\mathcal{B}_X$ denote the set of binary relations on $X$. Then $\mathcal{B}_X$ is an Ehresmann monoid where the binary operation is composition of relations, the identity is the equality relation
and for $\rho\in  \mathcal{B}_X$  we have
$\rho^+=\{ (a,a):\exists (a,b)\in \rho\}$ and $\rho^*=\{ (a,a): \exists (b,a)\in \rho\}$. 

 Since Ehresmann monoids form a variety of biunary monoids, any biunary submonoid of $\mathcal{B}_X$  will also be Ehresmann. In particular, the partial transformation monoid $\mathcal{PT}_X$, equipped with the unary operations $\alpha\rightarrow \alpha^+$ and $\alpha\rightarrow \alpha^*$, where $\alpha^+$ (respectively, $\alpha^*$) is the identity in the domain (respectively, image) of $\alpha$, is  an  Ehresmann monoid.
\end{ex}

\begin{ex}\label{ex:restriction} (Restriction  monoids) A unary monoid $M$ is {\em left restriction} if it is left Ehresmann and satisfies the additional identity $xy^+=(xy)^+x$. This latter identity is significant enough to have a name: it is   known as the {\em ample identity}. 
Roughly speaking, it allows one to re-arrange products so that the idempotent elements are on the left. A monoid is left restriction if and only if it embeds (as a unary monoid) into a partial transformation monoid $\mathcal{PT}_X$,  where $\mathcal{PT}_X$ is equipped with the unary operation $+$ as defined in Example~\ref{ex:relations}.

{\em Right restriction monoids} are defined dually and a biunary monoid is said to be {\em restriction} if it is both left and right restriction.  We remark that, as in Example~\ref{ex:relations},  $\mathcal{PT}_X$ is Ehresmann but not right restriction under $*$.  \end{ex}
 
\begin{ex}\label{ex:inverse}(Inverse monoids)
 We recall that a  monoid $M$ is {\em inverse} if for every $a\in M$ there is a unique element in $M$, denoted $a^{-1}$,
such that $a=aa^{-1}a$ and $a^{-1}=a^{-1}aa^{-1}$. An inverse monoid is restriction and hence Ehresmann where
$a^+=aa^{-1}$ and $a^*=a^{-1}a$.  Note that in this case, $E=E(M)$. 
A monoid is inverse if and only if it is isomorphic to an inverse submonoid of the symmetric inverse monoid $\mathcal{I}_X$ of partial bijections on a set $X$. \end{ex}

We remark that if $M$ is a (left, right) Ehresmann monoid, then the monoid congruence  $\sigma_E$ is also a unary, or biunary monoid congruence.   Since $E$ is determined by the  basic unary operation(s) on $M$, we make the  following  definition.

\begin{defn}\label{defn:sigmaone}
Let $M$ be a (left, right) Ehresmann monoid.  We define $\sigma$ to be the (monoid) congruence  $\sigma_E$. 

\end{defn}

The least group congruence
 on an inverse semigroup is precisely $\sigma$. 
For an inverse or left restriction monoid $M$, the description of $\sigma$ simplifies to $a\, \sigma\, b$ if and only if $ea = eb$ for some $e  \in E$ (see \cite{G,H}). For left Ehresmann monoids in general, we have no such useful description. Essentially, it is the ability to move idempotents around in products,  guaranteed by the ample identity, that enables this simplification. It is the inability to do so in(left, right)  Ehresmann monoids that, here as elsewhere, gives the theory of Ehresmann monoids a very different flavour.

\begin{defn}\label{defn:reduced} A (left, right) Ehresmann monoid is {\em reduced} if $E=\{ 1\}$.
\end{defn}

We remark that if $M$ is a (left, right) Ehresmann monoid then $M/\sigma$ is reduced. In the study of (left, right)  Ehresmann monoids, reduced (left, right) Ehresmann monoids play the role held by groups for inverse monoids.

\subsection{Normal forms in left Ehresmann monoids}\label{sub:normalforms}

Let $T$ be a submonoid of a left Ehresmann monoid $M$; we mean here that $T$ is a $(2, 0)$-subalgebra of the $(2, 1, 0)$-algebra $M$. It is easy to see that $E \cup T$ generates $M$ as a left Ehresmann monoid if and only if $E \cup T$ generates $M$ as a semigroup, which we denote by $M = \langle E\cup T\rangle_{(2)}$. From now on, when we write $M = \langle E\cup T\rangle_{(2)}$,  we assume that $T$ is a submonoid of $M$.

The notions of  $T$-normal forms, and of uniqueness of $T$-normal forms, were introduced in \cite{BGG}; they are crucial to this article.  We now give the relevant material from \cite{BGG} and continue its  development.

\begin{defn}\label{defn:Tnormal} Let $M=\langle E \cup T \rangle_{(2)}$  be a left Ehresmann monoid and let $a\in M$. If
\[a= t_0e_1t_1\dots e_nt_n,\]
where $n \geq 0$, $e_1, \dots, e_n \in E\setminus \{1\}$, $t_1, \dots, t_{n-1} \in T\setminus \{1\}$, $t_0, t_n \in T$ and for $1 \leq i \leq n$,
\[e_i < (t_ie_{i+1}\dots e_nt_n)^{+}, \]
then we say that $t_0e_1t_1\dots e_nt_n$ is a {\em $T$-normal form } of $a$.

 If every element of $M$ has a unique expression in $T$-normal form, then we say that $M$ has {\em uniqueness of $T$-normal forms}. 
\end{defn}

 Strictly speaking, in Definition~\ref{defn:Tnormal}, we should say that a $T$-normal form is given by a {\em sequence} $t_0,e_1,t_1,\dots, e_n,t_n$.  For ease, we do not do this; when we say, for example, that $t_0e_1t_1\dots e_nt_n$ is a $T$-normal form, it is implicit that it is with respect to the elements mentioned.

\begin{rem}\label{rem:TTN}
The condition in  Definition~\ref{defn:Tnormal} that $e_i < (t_ie_{i+1}\dots e_nt_n)^{+}$ for $1\leq i\leq n$ may be replaced by the equivalent condition that  $e_i < (t_ie_{i+1})^+ $ for $1\leq i< n$ and $e_n<t_n^+$.
\end{rem}

\begin{rem}\label{rem:TE} Let $M=\langle E \cup T \rangle_{(2)}$  be a left Ehresmann monoid. Notice that each $t\in T$ and each $t_0e_1t_1$ with $e_1<t_1^+$ is in $T$-normal form; in particular, $1e1$ is in $T$-normal form for $e\in E\setminus\{ 1\}$. If $M$ 
has  uniqueness of $T$-normal forms, then $E\cap T=\{ 1\}$. For, if $e\in E\cap T$ and $e<1$ then $e$ and $1e1$ would both be $T$-normal forms of $e$. \end{rem}

 An algorithm is provided in \cite[Lemma 3.1]{BGG} to obtain a $T$-normal form of an element $a$ of a left Ehresmann monoid $M=\langle E \cup T \rangle_{(2)}$.  For the purposes of this article, we outline a rather more simple approach, which extracts  some additional information.

\begin{lem}\cite{BGG}\label{bgg}
 Let $M=\langle E \cup T \rangle_{(2)}$ be a left Ehresmann monoid.  
  
 (i) If $a\in M$ has an expression
 \[a=t_0e_1t_1e_2\dots e_nt_n\]
where $n \geq 0$, $e_1, \dots, e_n \in E$, $t_0, t_1,  \dots, t_{n-1}, t_n \in T$,  then $a$ has an expression in $T$-normal form as
 \[a=u_0g_1u_1g_2\dots g_ku_k\]
 where $k\leq n$. Further, if $e_n<t_n^+$, then $u_k=t_n$ and $g_k\leq e_n$.

  (ii) Any $a\in M$ can be written in $T$-normal form.  
\end{lem}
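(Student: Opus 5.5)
Part (ii) follows from (i): since $M=\langle E\cup T\rangle_{(2)}$, every element is a product of elements of $E\cup T$. Inserting $1\in T$ or $1\in E$ between adjacent letters where needed, and multiplying together consecutive elements of $T$ (a submonoid) and of $E$ (a semilattice), every $a$ takes the form $t_0e_1t_1\dots e_nt_n$ of (i).

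For (i) I will argue by induction on $n$ and process the expression from the right, which fits the right-to-left nature of the condition $e_i<(t_ie_{i+1}\dots e_nt_n)^+$.

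\emph{Base case.} If $n=0$, then $a=t_0$ is already in $T$-normal form.

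\emph{Inductive step.} Suppose $n\geq 1$ and write $b=t_1e_2\dots e_nt_n$. This has $n-1$ idempotents, so by induction $b=v_0h_1v_1\dots h_mv_m$ in $T$-normal form with $m\le n-1$. Then
\[a=t_0e_1b=t_0\,(e_1b^+)\,b,\]
using $b=b^+b$. Put $f=e_1b^+\le b^+$. There are three cases.

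\begin{itemize}
\item If $f=b^+$, then $fb=b$ and $a=t_0b=(t_0v_0)h_1v_1\dots h_mv_m$. This is a normal form: the conditions on $h_i$ are unchanged, and if $m=0$ it is just the element $t_0v_0$ of $T$.
\item If $f<b^+$ and $v_0\neq 1$ (or $m=0$), then $a=t_0fv_0h_1\dots h_mv_m$. Here $f\neq 1$ because $f<b^+\le 1$, and $f<b^+=(v_0h_1\dots v_m)^+$, so this is a normal form with $m+1\le n$ idempotents.
\item If $f<b^+$ and $v_0=1$ with $m\ge1$, then $a=t_0(fh_1)v_1\dots h_mv_m$. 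Since $b^+=(h_1v_1\dots)^+=h_1(v_1\dots)^+$, we have $b^+\le h_1$, so $fh_1=f$. Moreover $f<b^+\le (v_1h_2\dots)^+$ and $f\ne 1$. So this is a normal form with $m\le n-1$ idempotents.
\end{itemize}

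In all cases $k\le n$.

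\emph{The refinement, and the main obstacle.} The main difficulty is the extra claim: if $e_n<t_n^+$, then $u_k=t_n$ and $g_k\le e_n$. The bookkeeping of the last factor must survive the induction. I will strengthen the inductive hypothesis to the following statement: when $e_n<t_n^+$, the resulting normal form has $k\ge1$, $u_k=t_n$ and $g_k\le e_n$.

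For $n=1$ the hypothesis $e_1<t_1^+$ means we are in a case with $f=e_1<t_1^+=b^+$ and $b=t_1$, $m=0$. This gives $a=t_0e_1t_1$, so $u_1=t_1$ and $g_1=e_1$.

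For $n\ge2$, induction gives $b$ a normal form with $m\ge1$, $v_m=t_n$ and $h_m\le e_n$. Each of the three cases keeps the final pair $h_mv_m$ unchanged, with one exception: the third case when $m=1$, where the last idempotent becomes $fh_1=f$. In that situation $f\le b^+\le h_1\le e_n$, so the inequality is preserved. Note also that in the first case with $m\ge1$ we keep $k=m\ge1$.

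Throughout, the key facts used are $b^+\le h_1$ when $v_0=1$ (from $(xy)^+=(xy^+)^+$ and the semilattice structure of $E$) and Lemma~\ref{lem:folk}.
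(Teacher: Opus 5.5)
Your proof is correct, and it organizes the argument differently from the paper. The paper first preprocesses the given word into a ``weak $T$-normal form'': it deletes identity idempotents, multiplies together adjacent letters, and discards internal $T$-letters equal to $1$. It then repeatedly repairs the \emph{rightmost} index $i$ with $f_i\not<(s_if_{i+1}\cdots s_m)^+$. Either $f_i$ is deleted, when $(s_i\cdots)^+\le f_i$, with a further concatenation of neighbouring idempotents if $s_{i-1}s_i=1$; or $f_i$ is replaced by $(s_i\cdots)^+f_i$. Termination comes from the number of idempotents together with the leftward-moving position of the rightmost violation.

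You instead use a single structural induction on $n$: normalize the tail $b$, then prepend $t_0e_1$ via $a=t_0(e_1b^+)b$, with three cases. This is essentially the $T$-normal-form analogue of the paper's own Lemma~\ref{h+m-Hormal form} for $H$-canonical forms. Your route has three advantages:
\begin{itemize}
\item No preprocessing is needed. An identity $e_1$ falls into your first case, and a tail whose normal form begins with $v_0=1$ is handled in your third case by merging $f$ into $h_1$, using $b^+=h_1(v_1h_2\cdots v_m)^+\le h_1$.
\item There is only one induction parameter.
\item The refinement becomes a clean strengthened hypothesis. Only the third case with $m=1$ touches the final pair, and there $f\le b^+\le h_1\le e_n$.
\end{itemize}
The paper's in-place rewriting, on the other hand, makes visible that the output arises from the given word purely by shrinking or deleting idempotents and multiplying together $T$-letters. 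It also shows, for instance, that the deletion-with-concatenation step never occurs when $1$ is the only unit of $T$. Both arguments rest on the same two facts: $b=b^+b$, and the identity $(xy)^+=(xy^+)^+$ in the form $(ec)^+=ec^+$ for $e\in E$.
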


\begin{proof}  Let $a\in M$. Since $E\cup T$ generates $M$ as a semigroup, and $E$ and $T$ are both submonoids of $M$,   certainly  $a=t_0e_1t_1e_2\dots e_nt_n$
for some  $n\geq 0$ where $e_i\in E$ and $ t_j\in T$ for $1\leq i\leq n$ and $0\leq j\leq n$.  Thus if we can verify $(i)$, statement $(ii)$ follows. 

Let $a$ be as expressed  in the previous paragraph. We proceed to verify $(i)$.  We may delete any $e_i=1$, noting that if $e_n<t_n^+$, then $e_n$ is not deleted. We  may then concatenate any elements of $T$ that are now adjacent in our product, and then delete any such products that are equal to $1$, except where this is $t_0t_1\dots t_i$ or  $t_jt_{j+1}\dots t_n$. Finally, we concatenate any idempotents that are now adjacent, noticing that the product of non-identity elements of $E$ is again a non-identity element. Again we notice that if $e_n<t_n^+$, then the last term $t_n$ in the product  remains and the last idempotent is  $f=: e_ie_{i+1}\dots e_n$ so that $f\leq e_n<t_n^+$.

At this point, we have $a=s_0f_1s_1\dots f_ms_m$ with  $m\geq 0$,
 $s_1,\dots, s_{m-1}\in T\setminus \{ 1\}$, $s_0,s_m\in T$, and $f_1\dots, f_m\in E\setminus\{ 1\}$ and  $m\leq n$;  for the purposes of this proof, let us call this a {\em weak $T$-normal form}.  Further, if $e_n<t_n^+$, then $t_n=s_m$ and $f_m(=f)<t_n^+=s_m^+$.    If $m=0$ then $a\in T$ and the result is clear. Proceding by induction, suppose  that $m>0$ and the result is true for all shorter expressions of $a$ in weak $T$-normal form.   If the  expression for $a$ is in $T$-normal form, we stop. Otherwise, there is a greatest $i$ with $1\leq i\leq m$ such that $f_i\not< (s_if_{i+1}\dots s_m)^+$. 

If $(s_if_{i+1}\dots s_m)^+\leq f_i$, then
\[\begin{array}{rcl}
a&=&s_0f_1s_1\dots f_ms_m\\
&=&s_0f_1s_1\dots s_{i-1}f_i (s_if_{i+1}\dots s_m)^+s_if_{i+1}\dots s_m\\
&=&
s_0f_1s_1\dots s_{i-1} (s_if_{i+1}\dots s_m)^+s_if_{i+1}\dots s_m\\
&=&
s_0f_1s_1\dots (s_{i-1}s_i)f_{i+1}\dots s_m.\end{array}\]
If $s_{i-1}s_i\neq 1$ then we call upon induction to complete the process. Otherwise, $s_{i-1}s_i=1 $ and we delete $s_{i-1}s_i$, concatenate $f_i$ with $f_{i+1}$,  and again use induction. It is worth remarking that if $T$ has no invertible elements other than 1,  we are never in the second situation. 

Now, if $(s_if_{i+1}\dots s_m)^+\not\leq f_i$,  then
$(s_if_{i+1}\dots s_m)^+ f_i<(s_if_{i+1}\dots s_m)^+ $. We obtain $a=s_0g_1s_1\dots g_ms_m$ where $g_j=f_j$ for all $j\in \{ 1,\dots ,m\}\setminus\{i\}$ and $g_i=(s_if_{i+1}\dots s_m)^+ f_i$. We then continue to apply the above procedure, noting that the greatest $k$ with $1\leq k\leq m$ such that   $g_k\not< (s_kg_{k+1}\dots s_m)^+$ is such that $k<i$. Clearly, this process stops with  $a$ written in $T$-normal form as 
$a=u_0g_1u_1g_2\dots g_ku_k$
 where $k\leq m \leq n$.  In the case where $e_n<t_n^+$, so that also $f_m<s_m^+$, notice that this process yields a $T$-normal form with last term $u_k=s_m=t_n$ and $g_k\leq f_m\leq e_n.$  This completes the proof of the lemma. \end{proof}

 \section{$*$-left Ehresmann monoids}\label{sec:*lE}

In Section~\ref{sec:H-proper} we  recall from \cite{GG}  the structure theorem for  left Ehresmann monoids  having 
uniqueness of $T$-normal forms, which may be thought of as a generalisation of a semidirect product construction; we postpone this until we have introduced actions in Section~\ref{sec:actions}.  Our purpose here is to show that, surprisingly, a left Ehresmann monoid $M$ with uniqueness of $T$-normal forms is very close to being Ehresmann.  Specifically, in such an $M$ 
every element has a minimal right identity from $E$. We do not give the class satisfying the conditions of Lemma~\ref{lem:wra} a name here, since we will only be interested in the left Ehresmann monoids that lie  in this class.  The proof of the following may be extracted from \cite[Corollary 3.12]{G}.

\begin{lem} \label{lem:wra}
Let $M$ be a unary monoid, where the unary operation is written $a\mapsto a^*$. Then $M$ satisfies the identities
\[ xx^*=x, \, (x^*)^*=x^*,\, x^*y^*=y^*x^*\mbox{ and }(xy^*)^*y^*=(xy^*)^*  \]
if and only if  $E=\{ a^*:a\in M\}$ 
 is a semilattice in $M$ and for every $a\in M$ we have that $ a^*$ is  the least  right identity from $E$ for $a$.
\end{lem}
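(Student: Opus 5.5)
We prove the two implications separately, mirroring the argument of Lemma~\ref{lem:folk}.

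\emph{Forward direction.} Assume the four identities hold. From $(x^*)^*=x^*$ and $xx^*=x$ applied to $a^*$ we get $a^*=a^*(a^*)^*=a^*a^*$. So every $a^*$ is idempotent, and $E=\{a^*:a\in M\}$ is a commutative set of idempotents. It is closed under multiplication because, taking $x=a^*$ and $y=b$ in the fourth identity,
\[
a^*b^*=(a^*)^*b^*=\ldots
\]
needs a little care, so we argue more directly. The fourth identity with $x=a^*$, $y=b$ reads $(a^*b^*)^*b^*=(a^*b^*)^*$. We also have $(a^*b^*)(a^*b^*)^*=a^*b^*$, and we want to identify $(a^*b^*)^*$ with $a^*b^*$.

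Since $a^*b^*$ is a product of commuting idempotents, it is idempotent. Write $e=a^*b^*$.
\begin{itemize}
\item From $e\,e^*=e$, commutativity of $E$ and $e^*b^*=e^*$ we get $e^*a^*=e^*$ by symmetry (swap the roles of $a$ and $b$, using $a^*b^*=b^*a^*$).
\item Hence $e^*=e^*a^*b^*=e^*e$.
\item Also $e=ee^*=e^*e$, using that $e^*$ commutes with $a^*$ and $b^*$.
\end{itemize}
Therefore $e=e^*e=e^*$, so $E$ is closed under multiplication and is a semilattice in $M$.

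For minimality, $a^*$ is a right identity for $a$ by the first identity. Suppose $ab^*=a$ for some $b^*\in E$. Then the fourth identity gives $a^*b^*=(ab^*)^*b^*=(ab^*)^*=a^*$, so $a^*\le b^*$.

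\emph{Converse direction.} Assume $E$ is a semilattice and each $a^*$ is the least right identity for $a$ in $E$.
\begin{itemize}
\item $xx^*=x$ holds by assumption.
\item For $e=a^*\in E$, $e$ is itself a right identity for $e$, so $e^*\le e$. Conversely $e=ee^*$ and $e,e^*$ are commuting idempotents, so $e\le e^*$. Hence $(x^*)^*=x^*$.
\item Commutativity of the $x^*$ is immediate since $E$ is a semilattice.
\item For the last identity, put $c=ab^*$. Then $c\,b^*=c$, so $b^*$ is a right identity for $c$ from $E$. Minimality gives $c^*\le b^*$, that is, $c^*b^*=c^*$, which is $(xy^*)^*y^*=(xy^*)^*$.
\end{itemize}

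The only delicate step is closure of $E$ under products in the forward direction. The plan is exactly the idempotent computation above, using the fourth identity twice (once for each of $a^*$ and $b^*$) to obtain $e^*\le a^*,b^*$, and $e=ee^*$ to obtain $e\le e^*$.
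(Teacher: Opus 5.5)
Your proof is correct and complete. The paper does not prove this lemma itself; it only says that a proof can be extracted from \cite[Corollary 3.12]{G}. Your direct check of both implications, in the style of the paper's proof of Lemma~\ref{lem:folk}, is therefore a self-contained replacement for that citation.

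The only thing to tidy is the closure step in the forward direction. Delete the abandoned line beginning $a^*b^*=(a^*)^*b^*$. Also, the equality $e^*a^*=e^*$ comes from the instance $x=b^*$, $y=a$ of the fourth identity together with $b^*a^*=a^*b^*=e$; it does not come from $ee^*=e$. After that, $e^*=e^*e$ and $e=ee^*=e^*e$ give $e=e^*\in E$, as you say.
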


\begin{defn} \label{defn:starE}  A biunary onoid $M$, where the two unary operations are denoted by $+$ and $*$, is 
 $*${\em -left Ehresmann}  if it is a left Ehresmann with respect to  $+$, satisfies the identities in Lemma~\ref{lem:wra} with respect to $*$, and in addition satisfies the identities 
$(x^*)^+=x^*\mbox{ and }(x^+)^*=x^+$. 
\end{defn}

 It follows from Definition~\ref{defn:starE}  that the class of $*$-left   Ehresmann monoids is a variety of algebras of type $(2,1,1,0)$.  The class of $*$-left Ehresmann {\em semigroups} is defined using the same identities as in the monoid case, and forms a variety of algebras of type $(2,1,1)$; on one occasion, where we take a biunary monoid subsemigroup of a $*$-left Ehresmann monoid, we will need this observation.  Note that a $*$-left   Ehresmann monoid satisfies the additional identity $1^*=1$.

\begin{rem}\label{rem:starE} From Definition~\ref{defn:starE} we have that for a $*$-left Ehresmann monoid $M$,
\[E=\{ a^+:a\in M\}=\{ a^*:a\in M\}.\]In view of Lemma~\ref{lem:wra},
the extra condition that a left Ehresmann monoid must satisfy to be $*$-left Ehresmann is that  of $a^*$ being   the least right identity for $a$. However,  {\em  a $*$-left Ehresmann monoid need not be Ehresmann} since we do not have the condition that if $a^*=b^*$, then $(ac)^*=(bc)^*$ for all $c\in M$, that is, the kernel $\widetilde{\mathcal{L}}_E$ of the unary operation $*$ need not be a right congruence. 
\end{rem}

 Examples of $*$-left Ehresmann monoids abound; clearly all Ehresmann monoids, which already form a broad class, are $*$-left Ehresmann. We show in Section~\ref{sec:H-proper} that free left Ehresmann monoids are $*$-left Ehresmann, and belong to a wide class of such given by a particular construction.  However, there certainly are examples of $*$-left Ehresmann monoids that are not Ehresmann; a specific example can be seen from taking $M$ to be a  monoid that is not left cancellative, and applying the construction $\mathcal{S}(M)$ in \cite{GJ} (see Propositions 3.4 and 3.5 of \cite{GJ}).

\begin{them}\label{rightAdequate}
Let $M = \langle E \cup T\rangle_{(2)}$ be a left Ehresmann monoid with uniqueness of $T$-normal forms. Then $M$ is a $*$-left Ehresmann  monoid where  for  $a \in M$ written in $T$-normal form as $a= t_0e_1t_1\dots t_{n-1}e_{n}t_n \in M$ we have
 $$a^*=
\begin{cases}
    e_n & \mbox{if}\ t_n=1\mbox{ and }n\geq 1\\
	1 & \mbox{otherwise}.
\end{cases}$$

\end{them}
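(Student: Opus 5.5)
The plan is to verify the conditions of Lemma~\ref{lem:wra} directly, using uniqueness of $T$-normal forms both to make $a^*$ well defined and to prove its minimality.

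First, since each $a\in M$ has a unique $T$-normal form, the formula for $a^*$ gives a well-defined unary operation, and its values lie in $E$. I will check the two mixed identities of Definition~\ref{defn:starE}. The identity $(x^*)^+=x^*$ holds because $a^*\in E$ and $(e)^+=e$ for $e\in E$ by Lemma~\ref{lem:folk}. For $(x^+)^*=x^+$, let $e=a^+$. If $e=1$, its normal form is $1$, so $e^*=1$. If $e\neq 1$, Remark~\ref{rem:TE} says $1e1$ is a $T$-normal form of $e$, and it is the unique one. Since its last $T$-entry is $1$ and $n=1$, we get $e^*=e$. It follows that $\{a^*:a\in M\}=E$, which is a semilattice. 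So, by Lemma~\ref{lem:wra}, it remains to show that $a^*$ is the least right identity for $a$ from $E$.

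That $a^*$ is a right identity is immediate. If $t_n=1$ and $n\geq 1$, then $a$ ends in the idempotent $e_n$, so $ae_n=a$. Otherwise $a^*=1$.

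The main step is minimality. Let $f\in E$ with $af=a$; I must show $a^*\leq f$. If $f=1$ there is nothing to prove, so assume $f<1$. Write
\[a=af=t_0e_1t_1\dots e_nt_n\,f\,1,\]
an expression of the form in Lemma~\ref{bgg}(i) whose final idempotent $f$ satisfies $f<1=1^+$. Lemma~\ref{bgg}(i) then gives a $T$-normal form $a=u_0g_1u_1\dots g_ku_k$ with $u_k=1$ and $g_k\leq f$. Since normal-form idempotents are non-identity, this forces $k\geq 1$.

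By uniqueness of $T$-normal forms, this coincides with the original form $t_0e_1\dots e_nt_n$. Hence $t_n=u_k=1$, $n=k\geq 1$ and $e_n=g_k\leq f$, so $a^*=e_n\leq f$. In particular, if $t_n\neq 1$ or $n=0$, no $f<1$ can be a right identity for $a$, which is consistent with $a^*=1$ being least.

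The point I expect to need most care is applying Lemma~\ref{bgg}(i) correctly. Its clause ``if $e_n<t_n^+$ then $u_k=t_n$ and $g_k\le e_n$'' must be used with the appended pair $f,1$ playing the role of $e_n,t_n$. One must also note that the original product $t_0e_1\dots t_n$ is allowed in the hypothesis of Lemma~\ref{bgg}(i) even though it is not itself in normal form once $f$ is appended.
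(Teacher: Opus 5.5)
Your proof is correct and is essentially the paper's argument: uniqueness of $T$-normal forms combined with Lemma~\ref{bgg}(i) applied to $a=af=t_0e_1t_1\dots e_nt_nf1$, which is exactly how the paper handles the case $n\geq 1$, $t_n\neq 1$. The differences are minor: you use the clause $u_k=1$, $g_k\leq f$ of Lemma~\ref{bgg}(i) to treat $n=0$ and $t_n=1$ in the same way, where the paper gives separate arguments, and you explicitly verify $(x^*)^+=x^*$ and $(x^+)^*=x^+$, which the paper leaves implicit.
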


\begin{proof} Let
$a=t_0e_1t_1\dots t_{n-1}e_{n}t_n$ be in $T$-normal form. It is enough to show that $a^*$ defined as above is  the least right identity of $a$ from $E$. Clearly, $aa^*=a$.
Now assume that $af = a$ for some $f \in E$.

If $n=0$  we have $a=t_0\in T$ and $a^*=1$. If $1 \neq f \in E$, then $t_0=t_0f1$ is also a $T$-normal form for $t_0$, contradicting the uniqueness of  $T$-normal forms. Thus $1$  is the only right identity for $a$ from $E$.

Suppose now that $n\geq 1$. There are two cases to consider.

If $n\geq 1$ and $t_n=1$, then  $a^*=e_n$, and  
\[a=af =t_0e_1t_1\dots t_{n-1}e_{n}f=t_0e_1t_1\dots t_{n-1}e_{n}f1.\]
It follows that \[ a=t_0(e_1(t_1e_2\dots t_{n-1}e_nf1)^+) t_1 \dots t_{n-2}(e_{n-1}(t_{n-1}e_nf1)^+)t_{n-1}e_nf1,\]
and this must be in $T$-normal form, else, 
by  the algorithm in  Lemma~\ref{bgg},
 we would obtain a $T$-normal form for $a$  of length less than $n$, contradicting the 
uniqueness of $T$-normal forms. Therefore, again by the uniqueness,
$e_n=e_nf$ so that $e_n\leq f$, whence $e_n$ is the least right identity for $a$ from $E$.

Finally, if $n\geq 1$ and $t_n\neq 1$, then certainly $a^*=1$.  Assume that  $f \neq 1$. We have
$$a=af =t_0e_1t_1\dots e_{n}t_nf1,$$
which by uniqueness cannot be in  $T$-normal form due to its length. But  by Lemma~\ref{bgg}, since $f<1=1^+$,  we may reduce this expression to yield a $T$-normal form 
\[a=af=s_0g_1s_1\dots s_{n-1}g1.\]
Thus, once more by the uniqueness, $t_n=1$ a contradiction. Hence  $f=1$, and $1$  is the only right identity for $a$ from $E$. The result follows. 
\end{proof}

 We finish this section by quoting a result connecting left Ehresmann monoids with uniqueness of $T$-normal forms, and the congruence $\sigma$.

\begin{prop}\cite[Proposition 3.4]{BGG}\label{M/Sigma-T} 
 Let $M = \langle E\cup T\rangle_{(2)}$ be a left Ehresmann monoid with uniqueness of $T$-normal forms. Then the map $c_{T}: M \rightarrow T$ given by $c_T(a) = t_0t_1\dots t_n$, where $a = t_0e_1t_1\dots e_nt_n$, with $t_0, \dots, t_n \in T$ and $e_1, \dots, e_n \in E$, is a well-defined onto monoid morphism with ${\rm ker}\, c_T = \sigma$,  so that $M/\sigma \cong T$ as (reduced left Ehresmann) monoids.
\end{prop}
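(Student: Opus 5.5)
The plan is to prove the statement in three stages: well-definedness of $c_T$, the morphism property, and the identification $\ker c_T=\sigma$.

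\textbf{Well-definedness.} The map $c_T$ is defined on arbitrary expressions $a=t_0e_1t_1\dots e_nt_n$, not only on $T$-normal forms, so I must show that $t_0t_1\dots t_n$ depends only on $a$. By uniqueness of $T$-normal forms, $a$ has a unique $T$-normal form $u_0g_1u_1\dots g_ku_k$, and it suffices to show that every expression of $a$ yields the same product as this normal form. For that I would trace the reduction procedure of Lemma~\ref{bgg}, which turns any expression into a $T$-normal form, and check that each step preserves the product of the $T$-entries.
\begin{itemize}
\item Deleting $e_i=1$, concatenating adjacent elements of $T$, deleting $T$-products equal to $1$, and merging adjacent idempotents all leave the product of the $T$-entries unchanged.
\item The step in which $f_i$ is absorbed into $(s_if_{i+1}\dots s_m)^+$ concatenates $s_{i-1}s_i$, which again preserves the product.
\item Replacing $f_i$ by $(s_if_{i+1}\dots s_m)^+f_i$ leaves the $T$-entries untouched.
\end{itemize}
Hence every expression gives the same product as the normal form it reduces to. Uniqueness of that normal form then gives well-definedness.

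\textbf{Morphism and surjectivity.} If $a$ and $b$ are written as alternating expressions, then concatenating them (merging $t_n$ with $s_0$) gives an expression for $ab$. Its $T$-product is $c_T(a)c_T(b)$, so $c_T$ is a monoid morphism. It is onto because $c_T(t)=t$ for every $t\in T$.

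\textbf{Identifying the kernel.} First, $\sigma\subseteq\ker c_T$: for $e\in E$ the expression $1e1$ gives $c_T(e)=1$, so $\ker c_T$ is a congruence containing $E\times E$. For the reverse inclusion, take any $a=t_0e_1t_1\dots e_nt_n$. Since $e_i\,\sigma\,1$, we get $a\,\sigma\,t_0t_1\dots t_n=c_T(a)$. So if $c_T(a)=c_T(b)$, then $a\,\sigma\,c_T(a)=c_T(b)\,\sigma\,b$. Therefore $\ker c_T=\sigma$ and $M/\sigma\cong T$ as monoids. The isomorphism is also one of reduced left Ehresmann monoids, since $E\sigma$ is the identity and $E\cap T=\{1\}$ (Remark~\ref{rem:TE}).

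The main obstacle is the well-definedness step: I must make sure that every rewriting in Lemma~\ref{bgg} preserves the $T$-product, and that the argument really relies on uniqueness of the normal form reached.
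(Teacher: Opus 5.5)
Your proof is correct. It is the natural argument for this result, which the paper quotes from \cite[Proposition 3.4]{BGG} without proof: every rewriting step of the procedure in Lemma~\ref{bgg} preserves the product of the $T$-entries, so uniqueness of $T$-normal forms gives well-definedness, and $c_T(E)=\{1\}$ together with $a\,\sigma\,c_T(a)$ gives $\ker c_T=\sigma$. The only quibble is your final appeal to $E\cap T=\{1\}$, which is not what is needed: ``as reduced left Ehresmann monoids'' means $T$ carries the constant operation $t\mapsto 1$ (in general $T$ is not closed under the $+$ of $M$), and any monoid isomorphism from the reduced monoid $M/\sigma$ to $T$ then automatically respects $+$.
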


\section{Actions and partial actions of monoids on posets and semilattices}\label{sec:actions}

In this section we recall the globalisation process from \cite{meg:2004} and \cite{hollings:2007} for the strong partial action of a monoid on a set. We  show that where the partial action is by order-preserving maps on a poset (respectively, semilattice), such that the domains of the actions are order ideals, then  we may globalise to an action by order-preserving maps again on a poset (respectively, semilattice). This construction will be crucial to the proof of our main result, Theorem~\ref{them:main}.
  At the end of this section we make some remarks on the development of this theory and analogous results.

\begin{defn}\label{defn:paction} 
Let $T$ be a monoid and let $X$ be a set. Then $T$ {\em acts partially} on $X$ (on the left) if there is a partial map $T \times X \rightarrow X$, where $(t, x) \mapsto t \cdot x$, such that for all $s, t \in T$ and $x \in X$
\[\exists 1 \cdot x\mbox{ and } 1 \cdot x = x\]
and 
\[\mbox{if}\ \exists t \cdot x \mbox{ and }\exists s\cdot (t\cdot x) \mbox{ then }\exists st \cdot x\ \mbox{ and  }\ s \cdot (t \cdot x) = st \cdot x.\]

If $T$ acts partially on $X$ via the partial map $\cdot$,  then we may refer to $(X,\cdot)$ as a  {\em partial action} of $T$.  A partial action $(X,\cdot)$ is {\em full} if for all $t\in T$ there is an $x\in X$ such that $\exists t\cdot x$. 
\end{defn}

\begin{defn}\label{defn:action} If in  Definition~\ref{defn:paction} the domain of the partial 
 map $T\times X\rightarrow X$ is $T\times X$, that is, $\exists t\cdot x$   for all $t\in T$ and $x\in X$, then we say that $T$ {\em acts} on $X$ (on the left) and we may refer to $(X,\cdot)$ as an {\em action} of $T$. 
\end{defn}

Clearly,  if $(X,\cdot)$ is an action of a monoid $T$, then for all $s,t\in T$ and $x\in X$ we have $s\cdot (t\cdot x)=st\cdot x$.

\begin{defn} \label{defn:strong} Let $(X,\cdot)$ be a partial action for a monoid  $T$.   Then $(X,\cdot)$  is  {\em strong} if for all $s,t\in T$ and $x\in X$, 
\[\mbox{ if }\exists t\cdot x\mbox{ and }
\exists st\cdot x\mbox{ then }\exists s\cdot (t\cdot x)\] {(and $s \cdot (t \cdot x) = st \cdot x$).} 
\end{defn}

It is also clear that all actions are strong partial actions. The motivation to give the notion for a strong partial action comes from the following notion of globalisation \cite{meg:2004}, see also \cite{hollings:2007}.

\begin{defn} \label{defn:global}  Let $(X,\cdot)$ be a partial action of a monoid  $T$.   A {\em globalisation} of  $(X,\cdot\,)$ is  a triple $(\iota, \mathbf{X}, \ast)$, where  $\mathbf{X}$ is a set, $\iota:X\rightarrow \mathbf{X}$ is an injection 
and $(\mathbf{X},\ast)$ is an action of   $T$  such that for all $t\in T$ and $x\in X$
\[\exists t\cdot x\mbox{ if and only if }t\ast x\iota\in X\iota,\]
and, if this holds, then
\[(t\cdot x)\iota=t\ast x\iota.\] \end{defn}

\begin{them}\cite{meg:2004,hollings:2007}\label{thm:strong} A partial action of a monoid $T$ on a set has a globalisation if and only if the partial action is strong.
\end{them}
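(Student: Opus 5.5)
The argument has two directions. Necessity is a direct calculation. For sufficiency I would use the usual construction of a globalisation as a quotient of $T\times X$, and the only delicate point is to identify which pairs get glued to the image of $X$.

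\emph{Necessity.} Suppose $(\iota,\mathbf{X},\ast)$ is a globalisation of $(X,\cdot)$. Let $s,t\in T$ and $x\in X$, and suppose $\exists\, t\cdot x$ and $\exists\, st\cdot x$. Then
\[ t\ast x\iota=(t\cdot x)\iota\in X\iota \quad\mbox{and}\quad s\ast (t\ast x\iota)=st\ast x\iota=(st\cdot x)\iota\in X\iota. \]
The second display says $s\ast (t\cdot x)\iota\in X\iota$, so by the definition of a globalisation $\exists\, s\cdot (t\cdot x)$. Hence $(X,\cdot)$ is strong.

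\emph{Sufficiency: the construction.} Assume $(X,\cdot)$ is strong. Let $\sim$ be the equivalence relation on $T\times X$ generated by the pairs
\[ (s,\,t\cdot x)\sim (st,\,x)\qquad\mbox{for all } s,t\in T \mbox{ and } x\in X \mbox{ with } \exists\, t\cdot x. \]
Put $\mathbf{X}=(T\times X)/\!\sim$, define $u\ast[(s,x)]=[(us,x)]$, and define $x\iota=[(1,x)]$. The action is well defined: the generating pairs are stable under left multiplication in the first coordinate, because $(us,\,t\cdot x)$ and $(ust,\,x)$ again form a generating pair. Hence $\sim$ is compatible with this multiplication. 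It is then immediate that $(\mathbf{X},\ast)$ is an action of $T$.

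\emph{Key step: identifying the class of $x\iota$.} For $x\in X$ set
\[ C_x=\{(t,y)\in T\times X:\ \exists\, t\cdot y \mbox{ and } t\cdot y=x\}. \]
I claim $[(1,x)]=C_x$. First, $C_x$ is contained in the class: if $(t,y)\in C_x$, the generating pair with $s=1$ gives $(t,y)\sim(1,t\cdot y)=(1,x)$. For the reverse inclusion it suffices to show that $C_x$ is closed under a single generating step in either direction, since $(1,x)\in C_x$.

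\begin{enumerate}
\item Suppose $(s,\,t\cdot y)\in C_x$. Then $s\cdot(t\cdot y)$ exists and equals $x$. By the partial action axiom, $st\cdot y$ exists and equals $x$, so $(st,y)\in C_x$.
\item Conversely, suppose $(st,y)\in C_x$, where $t\cdot y$ exists. Strongness gives that $s\cdot(t\cdot y)$ exists and equals $st\cdot y=x$, so $(s,\,t\cdot y)\in C_x$.
\end{enumerate}

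This closure step is the only place strongness is used, and I expect it to be the crux of the proof.

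\emph{Verifying the globalisation conditions.} Injectivity of $\iota$ follows from the claim: if $[(1,x')]=[(1,x)]$, then $(1,x')\in C_x$, so $x'=1\cdot x'=x$. Now let $t\in T$ and $x\in X$. Then
\[ t\ast x\iota=[(t,x)]\in X\iota \iff [(t,x)]=[(1,z)] \mbox{ for some } z\in X \iff (t,x)\in C_z \mbox{ for some } z\in X, \]
and the last condition holds exactly when $t\cdot x$ exists. In that case $z=t\cdot x$, so $(t\cdot x)\iota=t\ast x\iota$. Therefore $(\iota,\mathbf{X},\ast)$ is a globalisation of $(X,\cdot)$.
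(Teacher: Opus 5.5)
Your proof is correct and takes the same route as the paper. The paper cites this result from \cite{meg:2004,hollings:2007} rather than proving it, but it recalls exactly your construction: $\Sigma=(T\times P)/{\equiv}$ with $\equiv$ generated by the pairs $\big((mn,e),(m,n\cdot e)\big)$, the action $m\ast[n,e]=[mn,e]$, and the embedding $e\mapsto[1,e]$. Your identification of the class of $(1,x)$ with $C_x$, which is where strongness is used, is the special case of Lemma~\ref{actionFacts}$(i)$ needed to check the globalisation conditions.
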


In Section~\ref{sec:structtheorem}    we require the extension  of  Theorem~\ref{thm:strong} to the case where a monoid is acting partially and strongly on a semilattice in a way that preserves the  order.  We begin with a consideration of the partial action of a monoid on a poset.

\begin{defn} \label{defn:opaction}  Let $(P, \cdot)$ be a partial action (or action) of a monoid $T$, where $P$ is a poset.   Then ($P,\cdot)$ is  {\em order-preserving }
if  for all $x,y\in P$ and for all $t\in T$, 
if $x\leq y$ and $\exists t\cdot y$ then $\exists t\cdot x$ and $t\cdot x\leq t\cdot y$.
\end{defn}

 If  ($P,\cdot)$ is an  order-preserving partial action of a monoid $T$ on a poset $P$ then, for any $t\in T$, the {\em domain of the action of $t$}, that is, the set $\{ x\in P:\exists t\cdot x\}$, is an order-ideal of $P$. 

In Definition~\ref{defn:opaction} the poset $P$ may, in fact,  be a semilattice. We first state a globalisation result for order-preserving partial actions on posets and prove this via a series of steps. We remark that some of the steps we take in our constructions  are similar to those used in \cite[Section 4]{kudryavtseva:2015}, where they occur in  a more restricted setting. Subsequently we utilise Theorem~\ref{them:orderglobal} to prove an analogous result for order-preserving partial actions on semilattices.

\begin{them}\label{them:orderglobal} Let $(P,\cdot\,)$ be a strong order-preserving partial action of a monoid $T$ on a poset $P$. 
Then there is a globalisation $(\iota, \mathbf{P}, \diamond)$ of $(P,\cdot\,)$ where $\mathbf{P}$ is a poset, $\iota$ is an order-embedding such that $P\iota$ is an order-ideal of $\mathbf{P}$,  and $(\mathbf{P},\diamond)$ is an order-preserving action of $T$.

\end{them}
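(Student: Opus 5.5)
We follow the standard globalisation construction of \cite{meg:2004,hollings:2007} and equip it with an order. On $T\times P$ define a relation by $(s,x)\to(st,y)$ whenever $\exists t\cdot y$ and $t\cdot y=x$. Let $\equiv$ be the equivalence relation it generates. Put $\mathbf{P}=(T\times P)/\equiv$, $x\iota=[(1,x)]$, and $t\diamond[(s,x)]=[(ts,x)]$. This action is well defined because $\equiv$ is compatible with left multiplication in the first coordinate. Theorem~\ref{thm:strong} and its proof then give us that $\iota$ is injective and that $\exists t\cdot x$ if and only if $t\diamond x\iota\in P\iota$, with $(t\cdot x)\iota=t\diamond x\iota$; strongness is used exactly here. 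The new work is to define the order. We take $\preceq$ on $T\times P$ to be the pre-order generated by $(s,x)\preceq(s,y)$ whenever $x\leq y$ in $P$, together with both directions of the basic steps $\to$ that generate $\equiv$. Then $[(s,x)]\leq[(u,y)]$ if and only if $(s,x)\preceq(u,y)$.

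With this choice most of the verification is routine. Reflexivity and transitivity hold by construction. Antisymmetry holds on the quotient $\mathbf{P}$ provided the equivalence induced by $\preceq$ is exactly $\equiv$; this is a genuine point and part of the main step below. The action is order-preserving because every generating step of $\preceq$ is stable under $(s,x)\mapsto(ts,x)$, so $\diamond$ preserves $\leq$.

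The main obstacle is to show that $\iota$ is an order-embedding onto an order ideal. That is, if $[(u,z)]\leq[(1,y)]$ then $[(u,z)]=[(1,x)]$ for some $x\leq y$, and if $(1,x)\preceq(1,y)$ then $x\leq y$. We handle this with a normalisation lemma proved by induction on the length of a chain of generating steps. The lemma says: if $(u,z)$ is linked to $(1,y)$ by a chain of generating steps each of the form ``$\le$ in $P$'' or $\equiv$-steps, going downward from $(1,y)$, then $\exists u\cdot z$ and $u\cdot z\leq y$. For the inductive step we check each type of step, reading backwards from $(1,y)$:
\begin{enumerate}
\item A step $(s,x')\preceq(s,x)$ with $x'\leq x$ and $\exists s\cdot x$ gives $\exists s\cdot x'$ and $s\cdot x'\leq s\cdot x$. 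This is exactly order-preservation, which needs the domains to be order ideals.
\item A step $(s,t\cdot y')\to(st,y')$ needs strongness: from $\exists st\cdot y'$ we get $\exists s\cdot(t\cdot y')$ with the same value.
\item A step $(st,y')\to(s,t\cdot y')$ uses the partial action axiom: from $\exists t\cdot y'$ and $\exists s\cdot(t\cdot y')$ we get $\exists st\cdot y'$ with the same value.
\end{enumerate}
The lemma yields both claims. If $(1,x)\preceq(1,y)$ then $x=1\cdot x\leq y$. If $[(u,z)]\leq[(1,y)]$ then $[(u,z)]=[(1,u\cdot z)]$ with $u\cdot z\leq y$, so $P\iota$ is an order ideal. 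The same invariant, applied in both directions, controls antisymmetry near $P\iota$. For antisymmetry in general, translating back to base points is not available, so there we instead compare the equivalence induced by $\preceq$ with $\equiv$ step by step.

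If the generated pre-order proves awkward, we fall back on defining $[(s,x)]\leq[(u,y)]$ directly. The condition would be that there exist $v\in T$ and $(v,x')\equiv(s,x)$, $(v,y')\equiv(u,y)$ with $x'\leq y'$, closed transitively. This is the same order in a different presentation, and the same inductive invariant does the work.
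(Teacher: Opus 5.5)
\paragraph{The gap: antisymmetry.} You take $\mathbf{P}=(T\times P)/\equiv$ itself. Your plan then depends on showing that the equivalence induced by your pre-order coincides with $\equiv$. That is false in general, so $\mathbf{P}$ as you define it need not be a poset.

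Here is a counterexample. Let $T$ be the free commutative monoid on $a,b$, and let $P=\{p,q,x,y,u,v\}$ be ordered only by $x<y$ and $u<v$. Let $1$ act as the identity, set $a\cdot p=u$, $a\cdot q=v$, $b\cdot p=y$, $b\cdot q=x$, and leave every other product undefined. This is a strong order-preserving partial action:
\begin{itemize}
\item the domains of $a$ and $b$ are $\{p,q\}$, an order ideal of isolated points;
\item no $s\neq 1$ acts on $u,v,x,y$;
\item $sa$ and $sb$ have nonempty domain only when $s=1$.
\end{itemize}
So both the partial-action axiom and strongness hold trivially. The $\equiv$-classes of $(a,y)$ and $(a,x)$ are exactly $\{(a,y),(ab,p),(b,u)\}$ and $\{(a,x),(ab,q),(b,v)\}$, which are distinct. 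Yet $[a,x]\le[a,y]$ because $x\le y$. Also $[a,y]=[b,u]\le[b,v]=[a,x]$ because $u\le v$. No step-by-step comparison of the two equivalences can rule this out, and your fallback presentation of the order produces the same pre-order, so it fails in the same way.

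\paragraph{The repair.} Quotient once more, by the relation $\alpha\,\tau\,\beta$ if and only if $\alpha\le\beta\le\alpha$; this is what the paper does. The action descends to the quotient because it preserves the pre-order.

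Your normalisation lemma (the paper's Lemma~\ref{lem:firstorderideal}) is exactly what shows this extra collapsing is harmless where it matters. Suppose $[t,e]\,\tau\,[1,f]$.
\begin{itemize}
\item From $[t,e]\le[1,f]$ you get $\exists\, t\cdot e\le f$, and hence $[t,e]=[1,t\cdot e]$.
\item Then $[1,f]\le[1,t\cdot e]$ gives $f\le t\cdot e$, so $t\cdot e=f$ (Lemma~\ref{lem:harderequal}).
\end{itemize}
From this, the following all transfer to the quotient: injectivity and order-reflection of $\iota$, the order-ideal property of $P\iota$, and the globalisation condition ($\exists\, t\cdot e$ if and only if $t\diamond e\iota\in P\iota$, with $(t\cdot e)\iota=t\diamond e\iota$).

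Your invariant does control $\tau$ on classes meeting $P\iota$, as you observed. Away from $P\iota$, however, the collapsing is genuinely necessary. Apart from this point, your construction and inductive lemma match the paper's argument.
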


We begin by recalling the standard globalisation procedure (see \cite{meg:2004} and \cite{hollings:2007}) for the partial action of $T$ on the {\em set} $P$.
The first step is to define a relation  $\equiv$ on $T \times P$ as the equivalence relation generated by the set of pairs
\[\{ \big((mn, e) , (m,n\cdot e)\big):  m,n\in T, e \in P, \exists n\cdot e \}\]
and set 
\[\Sigma := (T \times P)/\equiv.\]

To simplify notation, we write $[(t,e)]$ as $[t,e]$, for all $[(t,e)] \in \Sigma$.

\begin{lem}\label{actionFacts}\cite{meg:2004,hollings:2007}
The following statements hold for $\Sigma$ constructed as above:
\begin{enumerate}
\item[$(i)$] if  $[m,e]=[n,f]$ then $\exists m \cdot e$ if and only if  $\exists n \cdot f$, in which case $ m \cdot e = n \cdot f$;
\item[$(ii)$] there is an embedding of the  set $P$   into $\Sigma$ given by   $e\mu =[1,e] $  for all $e \in P$; 
\item[$(iii)$] there is an action $\ast$ of $T$ on $\Sigma$ given by
\[m \ast [n,e] = [mn,e];\]
\item[$(iv)$] the triple  $(\mu, \Sigma, \ast)$  is a globalisation of $(P,\cdot\, )$.  
\end{enumerate}
\end{lem}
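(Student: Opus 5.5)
The plan is to verify the four parts of Lemma~\ref{actionFacts} in order. Everything hinges on $(i)$. The key point is that $\equiv$ is generated by a very concrete set of elementary pairs, so $(i)$ can be proved by induction on the length of a chain of elementary steps.

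For $(i)$, I first note that the relation ``$\exists m\cdot e$ if and only if $\exists n\cdot f$, and then $m\cdot e=n\cdot f$'' is reflexive, symmetric and transitive on $T\times P$. It therefore suffices to check a single generating pair $\big((mn,e),(m,n\cdot e)\big)$ with $\exists n\cdot e$.
\begin{itemize}
\item If $\exists m\cdot(n\cdot e)$, then the partial action axiom gives $\exists mn\cdot e$ and $mn\cdot e=m\cdot(n\cdot e)$.
\item Conversely, if $\exists mn\cdot e$, then since $\exists n\cdot e$, strongness (Definition~\ref{defn:strong}) gives $\exists m\cdot(n\cdot e)$, and the two values agree.
\end{itemize}
This is the only place where strongness is used, and it is the heart of the lemma.

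For $(ii)$, the map $\mu$ is clearly well defined. Suppose $[1,e]=[1,f]$. Since $\exists 1\cdot e$, part $(i)$ gives $e=1\cdot e=1\cdot f=f$, so $\mu$ is injective.

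For $(iii)$, I must show that $m\ast[n,e]=[mn,e]$ is well defined. It suffices to show that left multiplication by $m$ on the first coordinate maps generating pairs to generating pairs. Indeed, the pair $((kn,e),(k,n\cdot e))$ is sent to $((mkn,e),(mk,n\cdot e))$, which is again a generating pair. Hence the map $(k,e)\mapsto(mk,e)$ preserves $\equiv$. The action axioms $1\ast[n,e]=[n,e]$ and $m\ast(k\ast[n,e])=mk\ast[n,e]$ are then immediate from associativity in $T$.

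For $(iv)$, take $t\in T$ and $x\in P$.
\begin{itemize}
\item If $\exists t\cdot x$, then $t\ast x\mu=[t,x]=[1\cdot t,x]\equiv[1,t\cdot x]=(t\cdot x)\mu$, using the generating pair with $m=1$ and $n=t$.
\item Conversely, suppose $t\ast x\mu=[t,x]=[1,y]$ for some $y\in P$. Since $\exists 1\cdot y$, part $(i)$ gives $\exists t\cdot x$ and $t\cdot x=y$.
\end{itemize}
This establishes both the ``if and only if'' condition and the compatibility equation of Definition~\ref{defn:global}. The main (mild) obstacle is the converse direction in $(i)$, and strongness handles it exactly.
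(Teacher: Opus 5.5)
Your proof is correct and complete. The relation ``$\exists m\cdot e \Leftrightarrow \exists n\cdot f$, with equal values when defined'' is the kernel of the partial map $(m,e)\mapsto m\cdot e$, so it is an equivalence and $(i)$ reduces to the generating pairs, where the partial action axiom and strongness supply the two directions; the paper does not prove this lemma itself but cites \cite{meg:2004,hollings:2007}, and your argument is essentially the standard one given there.
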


Observe that $\mu$ being an embedding says that precisely that $[1,e] = [1,f]$ if and only if $e=f$.

The next step is to define a pre-order on $\Sigma$. Let  $\preceq$ be the relation on $\Sigma$ defined by the rule that for all $\alpha, \beta \in \Sigma$,
\[\alpha \preceq\ \beta\ \mbox{if}\ \alpha = [t, e]\mbox{ and } \beta= [t, f] \mbox{ , for some } t\in T \mbox{ and } e, f \in P \ \mbox{where}\ e \leq f. \]

Let $\leq$ be the pre-order generated by $\preceq$, that is, for all $\alpha, \beta \in \Sigma$,
\[\alpha \leq \beta \ \mbox{if and only if}\ \alpha = \alpha_0 \preceq \alpha_1 \preceq \alpha_2 \preceq \dots \preceq \alpha_n = \beta,\]
for some $\alpha_0, \dots, \alpha_n \in \Sigma$
where $n\geq 0$. 

\begin{lem}\label{lem:preservingorder} Let $[t,e],[s,f]\in \Sigma$ and $m\in T$ such that
$[t,e] \leq [s,f]$. Then \[m\ast [t,e] \leq m \ast [s,f].\]
\end{lem}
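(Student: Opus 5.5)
Since $\leq$ is the reflexive--transitive closure of $\preceq$, it suffices to prove the statement for a single step $\alpha\preceq\beta$; the general case then follows by induction on the length $n$ of a chain $\alpha=\alpha_0\preceq\alpha_1\preceq\dots\preceq\alpha_n=\beta$. Indeed, once we know that $\alpha_{i}\preceq\alpha_{i+1}$ implies $m\ast\alpha_i\preceq m\ast\alpha_{i+1}$, the elements $m\ast\alpha_0,\dots,m\ast\alpha_n$ form a $\preceq$-chain from $m\ast\alpha$ to $m\ast\beta$. The case $n=0$ is trivial, since $m\ast$ is a well-defined map on $\Sigma$ by Lemma~\ref{actionFacts}$(iii)$.

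For the single step, suppose $\alpha\preceq\beta$. By the definition of $\preceq$ there exist $u\in T$ and $e',f'\in P$ with $e'\leq f'$ such that $\alpha=[u,e']$ and $\beta=[u,f']$. Note that these representatives need not be the given $(t,e)$ and $(s,f)$, which is why we work with classes rather than with the representatives appearing in the statement. By Lemma~\ref{actionFacts}$(iii)$, which asserts that $\ast$ is a well-defined action independent of representatives, we have
\[m\ast\alpha=m\ast[u,e']=[mu,e']\quad\mbox{and}\quad m\ast\beta=m\ast[u,f']=[mu,f'].\]
These two classes share the first coordinate $mu$, and $e'\leq f'$, so $m\ast\alpha\preceq m\ast\beta$ directly from the definition of $\preceq$.

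The only point needing care is well-definedness: $m\ast$ must be computed on any representative of the class. This is exactly the content of Lemma~\ref{actionFacts}$(iii)$, which we may assume, so there is no real obstacle. The order-preserving hypothesis on the partial action is not needed at this stage; it will be needed later to show that $\leq$ restricted to $P\mu$ agrees with the order on $P$, and that the resulting quotient poset is suitable.
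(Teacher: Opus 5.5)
Your proof is correct and follows the same route as the paper. You show that $m\ast{-}$ preserves a single $\preceq$-step by computing on the representatives $[u,e'],[u,f']$ that witness it, using the well-definedness of $\ast$. You then extend this to the reflexive--transitive closure $\leq$, which is what the paper asserts in two lines, spelled out in more detail.
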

\begin{proof} It is clear from the definition that $\ast$ preserves $\preceq$, that is, if 
$\alpha\preceq \beta$ then $m\ast \alpha \preceq m\ast \beta$. The lemma now follows by bearing in mind that $\ast$ is certainly {\em well defined}.
\end{proof}

Let $\tau$ be the equivalence relation generated by $\leq$, that is, for all $\alpha, \beta \in \Sigma$,
\[\alpha\ \tau\ \beta \ \mbox{if}\ \alpha \leq \beta\ \mbox{and}\ \beta \leq \alpha.\]
The pre-order $\leq$ on $\Sigma$ now induces a partial order on  $\Sigma/\tau$, which we also denote by $\leq$. That is, for  $\alpha, \beta \in \Sigma$,
\[[\alpha] \leq [\beta] \ \mbox{if}\ \alpha \leq \beta.\]

To simplify notation, we write $\big[[t, e]\big]$ as $(t,e)$ for all $\big[[t, e]\big]\in \Sigma/\tau$.
Note that if $\exists t\cdot e$ then
\[(t,e)=\big[[t, e]\big]=\big[[1, t\cdot e]\big]=(1, t\cdot e).\]

Define  an action $\diamond$ of $T$ on $\Sigma/\tau$ by the rule that
\[m \diamond (t,e) = (mt,e).\]

\begin{lem}\label{star well defined}  The pair $(\Sigma/\tau,\diamond)$ is an order-preserving action of the monoid $T$ on the poset $\Sigma/\tau$.  
\end{lem}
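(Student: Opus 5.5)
The plan has three steps: show that $\diamond$ is well defined on $\Sigma/\tau$, check the action axioms, and check that $\diamond$ preserves the order. Each step reduces to a property of the action $\ast$ on $\Sigma$ from Lemma~\ref{actionFacts}(iii) together with Lemma~\ref{lem:preservingorder}.

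\emph{Well-definedness.} First observe that $m\diamond(t,e)$ is the $\tau$-class of $m\ast[t,e]=[mt,e]$. Suppose $\big[[t,e]\big]=\big[[s,f]\big]$, that is, $[t,e]\leq[s,f]$ and $[s,f]\leq[t,e]$ in the pre-order on $\Sigma$. Lemma~\ref{lem:preservingorder} gives $m\ast[t,e]\leq m\ast[s,f]$ and $m\ast[s,f]\leq m\ast[t,e]$. Hence $[mt,e]\ \tau\ [ms,f]$, so $(mt,e)=(ms,f)$.

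A subtlety here is that $\tau$ is defined on $\Sigma$, whose elements are $\equiv$-classes. Independence from the chosen representative $(t,e)$ of $[t,e]$ is therefore already handled, because $\ast$ is well defined on $\Sigma$ by Lemma~\ref{actionFacts}(iii). This well-definedness is the only step requiring care, and it is exactly what Lemma~\ref{lem:preservingorder} supplies.

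\emph{Action axioms.} Both axioms are inherited directly from $\ast$. For the identity, $1\diamond(t,e)=(1t,e)=(t,e)$. For compatibility, $m\diamond(n\diamond(t,e))=m\diamond(nt,e)=(mnt,e)=mn\diamond(t,e)$. The map is total on $T\times\Sigma/\tau$, so $(\Sigma/\tau,\diamond)$ is an action.

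\emph{Order preservation.} Suppose $(t,e)\leq(s,f)$ in $\Sigma/\tau$. By definition this means $[t,e]\leq[s,f]$ in $\Sigma$. Lemma~\ref{lem:preservingorder} then gives $[mt,e]\leq[ms,f]$, that is, $m\diamond(t,e)\leq m\diamond(s,f)$. Since the action is total, the existence clause in Definition~\ref{defn:opaction} is automatic. Finally, $\leq$ on $\Sigma/\tau$ is a partial order, being the order induced from a pre-order on its associated equivalence classes; this completes the proof.
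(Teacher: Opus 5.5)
Your proposal is correct and takes essentially the same approach as the paper. You prove well-definedness of $\diamond$ by applying Lemma~\ref{lem:preservingorder} in both directions, inherit the action axioms from $\ast$, and obtain order preservation from the same lemma; your remark that independence from the $\equiv$-representative is already covered by the well-definedness of $\ast$ just makes explicit what the paper leaves implicit.
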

\begin{proof} 
We  show that $\diamond$ is well defined. 
Suppose that $m \in T$ and $(t,e) = (s,f)$ for $(t,e),(s,f) \in \Sigma/\tau$. From $(t,e)=(s,f)$ we have  $[t, e] \leq [s, f]$ and $[s,f]\leq [t, e]$
in the pre-ordered set $\Sigma$.  Lemma~\ref{lem:preservingorder} gives that 

\[m\ast [t,e]\leq m\ast [s,f]\leq m\ast[t,e].\] Therefore $[mt, e] \leq [ms, f]\leq [mt,e]$ and consequently
\[m\diamond (t,e)=(mt, e) = (ms, f)=m\diamond (s,f),\] showing that $\diamond$ is well defined.  

It is immediate from the definition that $(\Sigma/\tau,\diamond)$ is an action of the monoid $T$. A similar argument to the above shows that if $(t,e)\leq (s,f)$ then  $m\diamond (t,e)\leq m\diamond(s,f)$, for any $m\in T$ and $(t,e),(s,f)\in \Sigma/\tau$. 
\end{proof}

 We now work towards showing the embedding $e\mapsto [1,e]$ is order-preserving. 
\begin{lem}\label{lem:firstorderideal}
For all $[s,g]\in \Sigma$ and $e \in P$, we have
\[[s,g]\leq [1,e]\Leftrightarrow \exists s\cdot g\mbox{ and }s\cdot g\leq e,\]
and then $[s,g]=[1,s\cdot g]$.

Moreover, 
\[[s,g]\leq [1,e]\Leftrightarrow [s,g]=[1,f]\mbox{ where }f\leq e.\]
\end{lem}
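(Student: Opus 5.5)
The plan is to prove the forward direction of the first equivalence by induction on the length $n$ of a $\preceq$-chain
\[[s,g]=\alpha_0\preceq\alpha_1\preceq\dots\preceq\alpha_n=[1,e].\]
Everything else then follows quickly. The reverse direction is easy. If $\exists s\cdot g$, then $((1\cdot s,g),(1,s\cdot g))$ is one of the generating pairs of $\equiv$, so $[s,g]=[1,s\cdot g]$. If moreover $s\cdot g\leq e$, the definition of $\preceq$ with $t=1$ gives $[1,s\cdot g]\preceq[1,e]$. The claim $[s,g]=[1,s\cdot g]$ is this same observation. The second equivalence is immediate from the first: take $f=s\cdot g$ in one direction, and in the other note that $[1,f]\preceq[1,e]$ when $f\leq e$.

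For the forward direction, the case $n=0$ is $[s,g]=[1,e]$. Lemma~\ref{actionFacts}(i) then gives $\exists s\cdot g$ and $s\cdot g=1\cdot e=e$. For the inductive step it suffices to prove the following one-step claim:
\[\text{if }\alpha\preceq[1,e],\text{ then }\alpha=[1,f]\text{ for some }f\leq e.\]
Applying the claim from the right-hand end of the chain, every $\alpha_i$ has the form $[1,f_i]$ with $f_0\leq f_1\leq\dots\leq e$, using transitivity in $P$. Finally Lemma~\ref{actionFacts}(i) applied to $[s,g]=[1,f_0]$ yields $\exists s\cdot g$ and $s\cdot g=f_0\leq e$.

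To prove the one-step claim, suppose $\alpha=[t,h]$ and $[1,e]=[t,k]$ with $h\leq k$. By Lemma~\ref{actionFacts}(i), $[t,k]=[1,e]$ forces $\exists t\cdot k$ and $t\cdot k=e$. Since the partial action is order-preserving and $h\leq k$, we get $\exists t\cdot h$ and $t\cdot h\leq t\cdot k=e$. Hence $\alpha=[t,h]=[1,t\cdot h]$, and we may take $f=t\cdot h$.

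I expect the main obstacle to be this use of Lemma~\ref{actionFacts}(i) to pass from an equality of classes in $\Sigma$ to definedness of the partial action. It is precisely where strongness enters, through the globalisation of Lemma~\ref{actionFacts}. Once that is secured, order-preservation of the partial action propagates definedness down each $\preceq$ step, and the rest is routine.
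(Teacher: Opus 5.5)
Your proof is correct and follows essentially the same route as the paper. The paper also inducts on the length of the $\preceq$-chain: it applies Lemma~\ref{actionFacts}$(i)$ to $[t_n,f_n]=[1,e]$ and uses order-preservation to rewrite $\alpha_{n-1}$ as $[1,t_n\cdot e_n]$, which is your one-step claim iterated from the right, and it handles the converse and the second equivalence just as you do.
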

\begin{proof}
Suppose that 
$[s,g]\leq [1,e]$. By definition of $\leq$  we have
\[[s, g]= \alpha_0 \preceq \alpha_1 \preceq \alpha_2 \preceq \dots \preceq \alpha_n =[1,e] \]
for some $\alpha_0, \dots, \alpha_{n} \in \Sigma$. 
Now, by the definition of $\preceq$, there are $t_i\in T,
e_i,f_i\in P$ with $e_i\leq f_i$ such that
\[\alpha_0=[t_1,e_1],\, \alpha_i=[t_i, f_i]=
[t_{i+1}, e_{i+1}],\,  1\leq i\leq n-1\mbox{ and } \alpha_n=[t_n,f_n].\]  
We show by induction on $n$ that $\exists s\cdot g$ and $s\cdot g\leq e$. If $n=0$, then
$[s,g]=[1,e]$ and by Lemma~\ref{actionFacts} $(i)$ and  $(ii)$  we obtain 
$\exists s\cdot g$ and $s\cdot g=e$.

Suppose for induction that the claim is true for $n-1$. From $[t_n,f_n]=[1,e]$,  just as before 
$\exists t_n \cdot f_n$ and $t_n \cdot f_n = e$. As $e_n \leq f_n$ and $(P,\cdot)$ is an order-preserving action of $T$,  we obtain   $\exists t_n \cdot e_n$ and $t_n \cdot e_n \leq t_n \cdot f_n$. Further,
\[\alpha_{n-1}=[t_{n-1},f_{n-1}]=[t_n,e_n]=[1, t_n\cdot e_n].\] Our inductive hypothesis now
yields that $\exists s\cdot g$ and $s\cdot g\leq t_n\cdot e_n$. Hence $s\cdot g\leq t_n\cdot f_n=e$. It follows that 
 $[s,g]=[1, s\cdot g]$.

Conversely, if $\exists s\cdot g$ and $s\cdot g\leq e$, then 
\[[s,g]=[1,s\cdot g]\preceq[1,e]\]
hence $[s,g]\leq [1,e]$ as required.
This completes the proof of the first statement, and the second follows. 
\end{proof}

\begin{cor}\label{cor:equal} For any $e,f\in P$,
we  have $[1,f]\leq [1,e]$ if and only if $f\leq e$. 
\end{cor}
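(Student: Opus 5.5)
This is a direct consequence of Lemma~\ref{lem:firstorderideal}, applied with $[s,g]=[1,f]$.

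For the forward direction, suppose $[1,f]\leq[1,e]$. Take $s=1$ and $g=f$ in the first statement of Lemma~\ref{lem:firstorderideal}. It gives $\exists\, 1\cdot f$ and $1\cdot f\leq e$. By Definition~\ref{defn:paction} we have $1\cdot f=f$, so $f\leq e$.

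For the converse, suppose $f\leq e$. The definition of $\preceq$, with $t=1$, gives $[1,f]\preceq[1,e]$ directly. Hence $[1,f]\leq[1,e]$ in the pre-order generated by $\preceq$. Equivalently, the converse half of Lemma~\ref{lem:firstorderideal} applies, since $\exists\, 1\cdot f=f\leq e$.

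No step is an obstacle here; all the work was done in Lemma~\ref{lem:firstorderideal}. The only point to check is the identity axiom $1\cdot f=f$ for partial actions, which is what makes the specialisation $s=1$ legitimate.

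This corollary, together with Lemma~\ref{actionFacts}$(ii)$, will later show that $e\mapsto(1,e)$ is an order-embedding into $\Sigma/\tau$. The map is injective because $[1,e]\,\tau\,[1,f]$ gives $e\leq f\leq e$, hence $e=f$.
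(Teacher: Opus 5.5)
Your proof is correct and is the intended argument. The paper states this corollary without proof, as an immediate consequence of Lemma~\ref{lem:firstorderideal}: you specialise to $[s,g]=[1,f]$ and use $1\cdot f=f$, exactly as you do. Your converse via $[1,f]\preceq[1,e]$ is an equally valid shortcut.
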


We now examine the induced partial order on $\Sigma/\tau$.

\begin{lem}\label{lem:harderequal} For any $(t,e), (1,f)\in \Sigma/\tau$,
we  have $(t,e)=(1,f)$ if and only if
$\exists t\cdot e$ and $t\cdot e= f$.
\end{lem}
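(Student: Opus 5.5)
Unwinding the definitions, $(t,e)=(1,f)$ in $\Sigma/\tau$ means precisely that $[t,e]\leq[1,f]$ and $[1,f]\leq[t,e]$ in the pre-ordered set $\Sigma$. So the plan is to handle the two inequalities separately: Lemma~\ref{lem:firstorderideal} deals with the first, and the real work is a companion statement for the second.

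For the converse direction, suppose $\exists t\cdot e$ and $t\cdot e=f$. Then Lemma~\ref{actionFacts} (or directly the generating pairs of $\equiv$, with $m=1$, $n=t$) gives $[t,e]=[1,t\cdot e]=[1,f]$ in $\Sigma$. Hence certainly $(t,e)=(1,f)$.

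For the forward direction, assume $[t,e]\leq[1,f]\leq[t,e]$. From $[t,e]\leq[1,f]$, Lemma~\ref{lem:firstorderideal} gives that $\exists t\cdot e$, that $t\cdot e\leq f$, and that $[t,e]=[1,t\cdot e]$. Substituting into the second inequality gives $[1,f]\leq[t,e]=[1,t\cdot e]$. Corollary~\ref{cor:equal} then yields $f\leq t\cdot e$. Antisymmetry of the order on $P$ gives $t\cdot e=f$, as required.

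I do not expect a genuine obstacle here. The only subtle point is that the inequality $[1,f]\leq[t,e]$ cannot be treated directly, since elements below $[t,e]$ need not lie in the image of $P$. The way round this is to first rewrite $[t,e]$ as $[1,t\cdot e]$, which is legitimate because of the first inequality together with Lemma~\ref{lem:firstorderideal}. After that rewriting, Corollary~\ref{cor:equal} applies.
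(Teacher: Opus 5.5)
Your proof is correct and is essentially the paper's own argument. For the converse you use $[t,e]=[1,t\cdot e]=[1,f]$. For the forward direction you apply Lemma~\ref{lem:firstorderideal} to $[t,e]\leq[1,f]$, rewrite $[t,e]$ as $[1,t\cdot e]$, obtain $f\leq t\cdot e$ from Corollary~\ref{cor:equal}, and finish by antisymmetry.
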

\begin{proof} If $\exists t\cdot e$ and $t\cdot e=f$ then as noted above, 
\[(t,e)=(1, t\cdot e)=(1,f).\] 

Conversely, let $(t,e)=(1,f)$.
Then $[t,e]\leq [1,f]$ so that by Lemma~\ref{lem:firstorderideal} we have that
$\exists t\cdot e$ and $t\cdot e\leq f$. Also 
$(1,f)=(t,e)=(1, t\cdot e)$ implies 
$[1,f]\leq [1, t\cdot e]$ giving that $f\leq t\cdot e$. Thus $t\cdot e=f$.
    \end{proof}
    
 Next we show the analogue of Lemma~\ref{lem:firstorderideal}  for $\Sigma/\tau$.
\begin{lem}\label{lem:secondorderideal}
For all $(s,g)\in \Sigma/\tau$ and $e \in P$, we have
\[(s,g)\leq (1,e)\Leftrightarrow \exists s\cdot g\mbox{ and }s\cdot g\leq e,\]
and then $(s,g)=(1,s\cdot g)$.

Moreover, 
\[(s,g)\leq (1,e)\Leftrightarrow (s,g)=(1,f)\mbox{ where }f\leq e.\]
\end{lem}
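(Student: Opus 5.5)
The plan is to reduce everything to Lemma~\ref{lem:firstorderideal} by unwinding the definition of the order on $\Sigma/\tau$. Recall that $(s,g)\leq (1,e)$ in $\Sigma/\tau$ means, by definition, that $[s,g]\leq [1,e]$ in the pre-ordered set $\Sigma$. This does not depend on the chosen representatives, since $\leq$ on $\Sigma/\tau$ is the partial order induced by the pre-order. So the first step is to fix the representative $[s,g]$ of $(s,g)$ and note that $(s,g)\leq(1,e)$ holds if and only if $[s,g]\leq[1,e]$.

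For the first equivalence, suppose $(s,g)\leq (1,e)$. Then $[s,g]\leq [1,e]$, and Lemma~\ref{lem:firstorderideal} gives that $\exists s\cdot g$, that $s\cdot g\leq e$, and that $[s,g]=[1,s\cdot g]$. Applying $\tau$-classes, we get $(s,g)=(1,s\cdot g)$. This equality also follows from the remark preceding the definition of $\diamond$, or from Lemma~\ref{lem:harderequal}. Conversely, suppose $\exists s\cdot g$ and $s\cdot g\leq e$. Then $[s,g]=[1,s\cdot g]\preceq[1,e]$, so $[s,g]\leq[1,e]$, and hence $(s,g)\leq(1,e)$.

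One subtlety needs care: the statement quantifies over the element $(s,g)$ of $\Sigma/\tau$, and the condition ``$\exists s\cdot g$ and $s\cdot g\leq e$'' is phrased in terms of a representative. I will check that this condition is independent of the representative. Suppose $(s,g)=(s',g')$. Then $[s,g]\leq[s',g']$ and $[s',g']\leq[s,g]$. If $[s,g]\leq[1,e]$, then transitivity of the pre-order gives $[s',g']\leq[1,e]$, so Lemma~\ref{lem:firstorderideal} applies equally to the other representative. This is the only point where some thought is needed; the rest is routine.

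For the second equivalence, the forward direction is immediate from the first part: take $f=s\cdot g\leq e$, so that $(s,g)=(1,f)$. For the reverse direction, suppose $(s,g)=(1,f)$ with $f\leq e$. Corollary~\ref{cor:equal} gives $[1,f]\leq[1,e]$, so $(s,g)=(1,f)\leq(1,e)$.
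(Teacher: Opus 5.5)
Your proof is correct and follows the paper's argument: both reduce the first equivalence directly to Lemma~\ref{lem:firstorderideal} via the definition of the induced order on $\Sigma/\tau$. The only small deviation is in the converse of the second equivalence. You obtain $(1,f)\leq(1,e)$ from Corollary~\ref{cor:equal} and transport it along $(s,g)=(1,f)$, whereas the paper uses Lemma~\ref{lem:harderequal} to extract $\exists s\cdot g$ with $s\cdot g=f$ and then reapplies the first part. Both routes work, and your representative-independence check is harmless but not needed, since the statement concerns the given pair $s,g$.
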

\begin{proof} 
From definition, $(s,g)\leq (1,e)$ if and only if $[s,g]\leq [1,e]$, hence by Lemma~\ref{lem:firstorderideal} we get $(s,g)\leq (1,e)$ if and only if $\exists s\cdot g$ and $s\cdot g\leq e$. In this case, $(s,g)=(1, s\cdot g)$ follows.

Certainly then, if $(s,g)\leq (1,e)$, we obtain  $(s,g)=(1,f)$ where $f\leq e$. Conversely, if $(s,g)=(1,f)$ with $f\leq e$, then by Lemma~\ref{lem:harderequal} it follows that $\exists s\cdot g$ and $s\cdot g=f$, so that by the above,  $(s,g)\leq (1,e)$.
\end{proof}

\begin{cor}\label{cor:vhardequal}  For any $(1,e), (1,f)\in \Sigma/\tau$,
we  have $(1,e)\leq (1,f)$ if and only if
$ e\leq f$. In particular, $(1,e)=(1,f)$ if and only if $ e= f$.
    \end{cor}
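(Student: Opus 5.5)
The statement follows almost immediately from Lemma~\ref{lem:secondorderideal}, which already describes exactly when an element of $\Sigma/\tau$ lies below an element of the form $(1,e)$. I will derive the first claim from it and the equality claim from antisymmetry.

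First suppose $(1,e)\leq (1,f)$. I apply Lemma~\ref{lem:secondorderideal} with $s=1$ and $g=e$. This gives that $\exists 1\cdot e$ and $1\cdot e\leq f$. By Definition~\ref{defn:paction} we have $1\cdot e=e$, so $e\leq f$.

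Conversely, suppose $e\leq f$. Then $[1,e]\preceq [1,f]$ directly from the definition of $\preceq$, taking $t=1$. Hence $[1,e]\leq[1,f]$ in $\Sigma$, and so $(1,e)\leq(1,f)$ in $\Sigma/\tau$ by the definition of the induced order. Equivalently, the converse direction of Lemma~\ref{lem:secondorderideal} gives this at once, since $\exists 1\cdot e$ and $1\cdot e=e\leq f$.

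For the equality statement, suppose $(1,e)=(1,f)$. Then $(1,e)\leq(1,f)$ and $(1,f)\leq(1,e)$, so the first part gives $e\leq f$ and $f\leq e$. Since $\leq$ is a partial order on the poset $P$, antisymmetry gives $e=f$. The reverse implication is trivial. Alternatively, this also follows from Lemma~\ref{lem:harderequal} with $t=1$.

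I do not expect any real obstacle here. The substantive work, controlling chains of $\preceq$ that pass through $\equiv$-classes, was already done in Lemma~\ref{lem:firstorderideal}, and everything above just specialises the earlier lemmas to $s=t=1$.
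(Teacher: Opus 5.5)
Your proposal is correct and follows essentially the paper's route: the paper states this corollary without a separate proof, as an immediate consequence of Lemma~\ref{lem:secondorderideal} with $s=1$ and $1\cdot e=e$. The equality part then follows by antisymmetry in $P$, or directly from Lemma~\ref{lem:harderequal} with $t=1$, exactly as you describe.
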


\begin{cor}\label{cor:iota} The map
$\iota:P\rightarrow \Sigma/\tau$ given by
$e\iota=(1,e)$ is an order-embedding. 
\end{cor}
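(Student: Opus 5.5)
The map $\iota\colon e\mapsto (1,e)$ is well defined because $(1,e)$ is simply the $\tau$-class of the $\equiv$-class of $(1,e)$. We then check that $\iota$ is injective and that it preserves and reflects the order. All of this is already contained in Corollary~\ref{cor:vhardequal}, and the proof consists of reading that corollary correctly.

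Injectivity comes from the second clause of Corollary~\ref{cor:vhardequal}. If $e\iota=f\iota$, then $(1,e)=(1,f)$, and so $e=f$. The order condition comes from the first clause: $e\leq f$ holds in $P$ if and only if $(1,e)\leq(1,f)$ in $\Sigma/\tau$, that is, if and only if $e\iota\leq f\iota$. An order-embedding requires exactly this, namely that $\iota$ both preserves and reflects $\leq$. Injectivity also follows from this directly, since $\leq$ on $\Sigma/\tau$ is a partial order.

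All the real work was done earlier. The key input is Lemma~\ref{lem:firstorderideal}, which shows that any $\preceq$-chain ending at $[1,e]$ can be pulled back into $P$ using the order-preserving property of the partial action. Lemma~\ref{lem:harderequal} and Lemma~\ref{lem:secondorderideal} then transfer this from the pre-order on $\Sigma$ to the partial order on $\Sigma/\tau$. Given those results, there is no obstacle left at this stage. The one point to keep in mind is that the order on $\Sigma/\tau$ is defined through representatives, with $[\alpha]\leq[\beta]$ exactly when $\alpha\leq\beta$. This is what allows Corollary~\ref{cor:equal} to carry over unchanged to the classes $(1,e)$.

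As a preview of how this fits into Theorem~\ref{them:orderglobal}, Lemma~\ref{lem:secondorderideal} also shows that $P\iota$ is an order ideal of $\Sigma/\tau$. That property is needed for the theorem, although Corollary~\ref{cor:iota} itself does not ask for it.
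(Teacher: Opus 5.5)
Your proposal is correct and matches the paper. The paper states Corollary~\ref{cor:iota} without a separate proof, as an immediate consequence of Corollary~\ref{cor:vhardequal}, which in turn comes from Corollary~\ref{cor:equal} via the definition of $\leq$ on $\Sigma/\tau$ through representatives; reading order preservation and reflection off the first clause and injectivity off the second (or off antisymmetry in $P$) is exactly what is intended.
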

 We are now in a position to complete the proof of Theorem~\ref{them:orderglobal}.
\begin{proof} 
Let  $\mathbf{P}=\Sigma/\tau$.  From  Lemma~\ref{star well defined}, it follows that $(\mathbf{P},\diamond)$ is an order-preserving action of $T$ and by Corollary ~\ref{cor:iota}, we have that  $\iota:P\rightarrow \mathbf{P}$  is an order-embedding. Moreover, from Lemma~\ref{lem:secondorderideal}, we obtain that $P\iota$ is an order ideal of $\mathbf{P}$. Let $t\in T$ and $e\in P$.  We now check the remaining condition required for $(\iota,\mathbf{P}, \diamond)$ to be a globalisation.  Using Lemma~\ref{lem:harderequal} we get

\[\begin{array}{rcl}
t\diamond e\iota\in P\iota&\Leftrightarrow&
t\diamond (1,e)=(1,f)\mbox{ for some }f\in P\\
&\Leftrightarrow&(t,e)=(1,f)\mbox{ for some }f\in P\\
&\Leftrightarrow& \exists t\cdot e\mbox{ and }t\cdot e=f.
    \end{array}\]
    It then follows that 
    \[(t\cdot e)\iota=(1, t\cdot e)= (t,e)=  t\diamond (1,e)=t\diamond e\iota.\]
  Thus $(\iota,\mathbf{P}, \diamond)$ is a globalisation.
 \end{proof}

 Next we state and prove a partial analogue of Theorem~\ref{them:orderglobal} for the order-preserving partial action of a monoid on a {\em semilattice} and again construct our proof in a series of steps.  Note that the difference  in the statement of Theorem~\ref{them:slglobal}, compared to that of Theorem~\ref{them:orderglobal}, is that  we are not claiming that $Y\kappa$ is an order-ideal of $\mathcal{X}$. 

\begin{them}\label{them:slglobal} Let $(Y,\cdot\,)$ be a strong order-preserving partial action of a monoid $T$ on a semilattice $Y$. 
Then there is a globalisation $(\kappa, \mathcal{X}, \bullet)$ where $\mathcal{X}$ is a semilattice  with identity,  $\kappa$ is a semilattice embedding,   and the action $(\mathcal{X},\bullet)$ of $T$ is order-preserving.
\end{them}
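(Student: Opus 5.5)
First apply Theorem~\ref{them:orderglobal} to the underlying poset of $Y$. This yields a globalisation $(\iota,\mathbf{P},\diamond)$ in which $\mathbf{P}=\Sigma/\tau$ is a poset, $\iota$ is an order-embedding, $Y\iota$ is an order ideal of $\mathbf{P}$, and $\diamond$ is an order-preserving action of $T$. In general $\mathbf{P}$ need not have meets, so we embed it into a semilattice with identity. The natural choice is the semilattice of finitely generated order ideals: let $\mathcal{X}$ be the set of all order ideals of $\mathbf{P}$ that are finite unions of principal ideals ${\downarrow}p$ with $p\in\mathbf{P}$, together with $\mathbf{P}$ itself as a top element. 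The operation on $\mathcal{X}$ is intersection. The problem is that ${\downarrow}p\cap{\downarrow}q$ need not be finitely generated. To avoid this, define $\mathcal{X}$ instead as the closure of $\{{\downarrow}p : p\in\mathbf{P}\}\cup\{\mathbf{P}\}$ under finite intersections. This is a meet-semilattice under $\cap$ with identity $\mathbf{P}$.

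Next we define the action. For $t\in T$ and an order ideal $I$ of $\mathbf{P}$, let $t\bullet I={\downarrow}(t\diamond I)$, the order ideal generated by the image. Then $t\bullet{\downarrow}p={\downarrow}(t\diamond p)$, because $\diamond$ is order-preserving. We set $t\bullet\mathbf{P}=\mathbf{P}$, as we must if the identity is to be fixed; this needs a separate convention, since ${\downarrow}(t\diamond\mathbf{P})$ may be smaller. Three things then have to be checked. The first is that $\bullet$ is an action, i.e.\ $s\bullet(t\bullet I)=st\bullet I$; this holds because ${\downarrow}(s\diamond{\downarrow}J)={\downarrow}(s\diamond J)$. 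The second is that $\bullet$ is order-preserving, which is clear for inclusion. The third, and the main obstacle, is that $\mathcal{X}$ is closed under $\bullet$. The map $t\bullet-$ preserves $\subseteq$, but it need not preserve intersections, so $t\bullet({\downarrow}p\cap{\downarrow}q)$ may fail to be an intersection of principal ideals. I would try first to take $\mathcal{X}$ to be the smallest family of order ideals containing the principal ideals and $\mathbf{P}$ that is closed under finite intersections and under every $t\bullet-$. This family is a semilattice under $\cap$ with top $\mathbf{P}$, and $\bullet$ is an order-preserving action on it by construction. Because only order-preservation, and not morphism, is required, this works.

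The embedding is $\kappa:Y\to\mathcal{X}$, $y\kappa={\downarrow}(y\iota)$. Since $Y\iota$ is an order ideal of $\mathbf{P}$ and $\iota$ is an order-embedding, ${\downarrow}(y\iota)=(\,{\downarrow}_Y y)\iota$. Hence $(y\wedge z)\kappa=y\kappa\cap z\kappa$, using that $Y$ has meets and that ideals below $Y\iota$ correspond to ideals of $Y$. This is the point where the order-ideal conclusion of Theorem~\ref{them:orderglobal} is essential. It also shows that $\kappa$ is injective and is a semilattice morphism.

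Finally we verify the globalisation condition: $\exists t\cdot y$ if and only if $t\bullet y\kappa\in Y\kappa$, in which case $(t\cdot y)\kappa=t\bullet y\kappa$. Now $t\bullet y\kappa={\downarrow}(t\diamond y\iota)$. Suppose this equals ${\downarrow}(z\iota)$ for some $z\in Y$. Then $t\diamond y\iota=z\iota\in Y\iota$, since principal ideals determine their generators in a poset. By the globalisation property of $\diamond$ this gives $\exists t\cdot y$ and $(t\cdot y)\iota=z\iota$. The converse is immediate. So $(\kappa,\mathcal{X},\bullet)$ is the required globalisation. $Y\kappa$ need not be an order ideal of $\mathcal{X}$, because $\mathcal{X}$ contains non-principal ideals inside ${\downarrow}(y\iota)$ generated via the action; this is consistent with the remark preceding the theorem.
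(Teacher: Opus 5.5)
\paragraph{The gap: the convention $t\bullet\mathbf{P}=\mathbf{P}$.} The theorem does not require the action on $\mathcal{X}$ to fix the identity of $\mathcal{X}$. Nor does it require $\kappa$ to send a top of $Y$ to $\mathbf{P}$: it asks only for a semilattice embedding. Indeed, in $\mathcal{P}_{\ell}(T,X)$ one has $t^+=t\cdot 1_X$, which in general differs from $1_X$. The convention is therefore unnecessary, and it actually breaks your construction.

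Take $Y=\{0<1\}$ and $T=\{1,s,t\}$ with $xy=x$ for $x,y\in\{s,t\}$. Let $T$ act globally by $s\cdot y=0$ and $t\cdot y=1$. This is a strong order-preserving partial action, and here $\mathbf{P}=\{0\iota<1\iota\}$. So $\mathbf{P}={\downarrow}(1\iota)=1\kappa$ is principal, and your two rules disagree on it:
\begin{itemize}
\item the formula gives $s\bullet{\downarrow}(1\iota)={\downarrow}(s\diamond 1\iota)={\downarrow}(0\iota)$;
\item the convention gives $s\bullet\mathbf{P}=\mathbf{P}$.
\end{itemize}
If the convention wins, two things fail. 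The globalisation condition fails, since $s\bullet 1\kappa=1\kappa\in Y\kappa$ while $(s\cdot 1)\kappa=0\kappa\neq 1\kappa$. The action law fails too. Indeed $t\bullet 0\kappa={\downarrow}(t\diamond 0\iota)={\downarrow}(1\iota)=\mathbf{P}$, so $s\bullet(t\bullet 0\kappa)=\mathbf{P}$. On the other hand $st\bullet 0\kappa=s\bullet 0\kappa=0\kappa$.

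Two of your verifications silently use the uniform formula rather than the convention:
\begin{itemize}
\item the action check via ${\downarrow}(s\diamond{\downarrow}J)={\downarrow}(s\diamond J)$;
\item the final step $t\bullet y\kappa={\downarrow}(t\diamond y\iota)$.
\end{itemize}
So as written the argument is inconsistent.

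\paragraph{The fix, and how it compares with the paper.} Delete the convention and set $t\bullet I={\downarrow}(t\diamond I)$ for every order ideal $I$, including $\mathbf{P}$. Then all of your remaining checks go through:
\begin{itemize}
\item $\bullet$ is an order-preserving action;
\item $\kappa$ is a semilattice embedding, via the order-ideal property of $Y\iota$;
\item the globalisation condition holds, because principal ideals determine their generators.
\end{itemize}
This is essentially the paper's proof. The only remaining difference is the choice of $\mathcal{X}$. The paper takes all order ideals of $\mathbf{P}$ under intersection, which is automatically closed under finite intersections and under every $t\bullet-$. Your closure construction, and the detour through finitely generated ideals, are therefore unnecessary, though harmless. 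Note also that in the paper $1_Y\kappa$ need not be the identity of $\mathcal{X}$. This is exactly why $\mathcal{Q}_\ell(T,X,Y)$ is only a monoid subsemigroup of $\mathcal{P}_\ell(T,X)$, rather than a submonoid.
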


From Theorem~\ref{them:orderglobal} there is a globalisation
$(\iota,\mathbf{Y},\diamond)$ of $(Y,\cdot\, )$, where $\mathbf{Y}$ is a poset, $\iota$ is an order-embedding such that $Y\iota$ is an order-ideal of $\mathbf{Y}$,  and the action $\diamond$ of $T$ on $\mathbf{Y}$ is order-preserving. We retain the notation of that theorem.

 Recall that the set of order-ideals of any poset $P$ forms a semilattice under intersection. For a subset $A$ of $P$,  we denote by $A^{\omega}$ 
 the order ideal of $P$ generated by $A$, that is,
 \[A^{\omega}=\{ p\in P:p\leq q\mbox{ for some }q\in A\} \]
 and for $q\in P$ we abbreviate $\{q\}^{\omega}$
 by $q^{\omega}$. It is immediate that for any $p,q\in P$ we have $p^\omega\subseteq q^\omega$ if and only if $p\leq q$ and so $p^\omega= q^\omega$ if and only if $p=q$.

Let ${\mathcal{X}}$ be the semilattice of order ideals of  ${\mathbf{Y}}$ under intersection;  the identity of ${\mathcal{X}}$ is ${\mathbf{Y}}$. 
Put
\[{\mathcal{Y}} = \{(1, e)^\omega: e\in Y\}.\]

\begin{lem}\label{EYiso}
The set $\mathcal{Y}$ is a subsemilattice  of $\mathcal{X}$ and the map $\kappa:Y \rightarrow \mathcal{Y}$ given by $e \kappa = (1,e)^\omega$ is a semilattice isomorphism. 
\end{lem}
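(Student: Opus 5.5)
The plan is to show first that $\kappa$ is an injective map, then that it preserves meets. Once both are in hand, its image $\mathcal{Y}$ is closed under intersection, so it is a subsemilattice of $\mathcal{X}$ and $\kappa$ is a semilattice isomorphism onto it.

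Injectivity and order-preservation come from facts already recorded. For any $p,q\in\mathbf{Y}$ we have $p^\omega\subseteq q^\omega$ if and only if $p\leq q$. Hence, for $e,f\in Y$, Corollary~\ref{cor:vhardequal} gives
\[e\kappa\subseteq f\kappa \iff (1,e)\leq (1,f)\iff e\leq f.\]
In particular $e\kappa=f\kappa$ forces $e=f$, so $\kappa$ is an order-isomorphism of $Y$ onto $\mathcal{Y}$.

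The main step, and the one that needs the order-ideal property, is to show
\[(1,e)^\omega\cap(1,f)^\omega=(1,ef)^\omega \quad\mbox{for all } e,f\in Y.\]
Since $ef\leq e$ and $ef\leq f$, the inclusion $\supseteq$ follows from the order-embedding. For $\subseteq$, take $\alpha\in\mathbf{Y}$ with $\alpha\leq(1,e)$ and $\alpha\leq(1,f)$. By Lemma~\ref{lem:secondorderideal}, the first inequality gives $\alpha=(1,g)$ for some $g\in Y$ with $g\leq e$. Applying Corollary~\ref{cor:vhardequal} to the second inequality then gives $g\leq f$. Therefore $g\leq ef$ in the semilattice $Y$, so $\alpha=(1,g)\leq(1,ef)$ and $\alpha\in(1,ef)^\omega$.

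The key point is that $Y\iota$ is an order ideal of $\mathbf{Y}$ (Lemma~\ref{lem:secondorderideal}). This forces every lower bound of $(1,e)$ to lie in the image of $Y$, where meets are computed in $Y$ itself. Without that, a common lower bound of $(1,e)$ and $(1,f)$ could fail to lie below $(1,ef)$.

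With meets preserved, $\mathcal{Y}$ is closed under intersection and is therefore a subsemilattice of $\mathcal{X}$. The map $\kappa$ is then a bijective semilattice morphism $Y\to\mathcal{Y}$, which is exactly the claimed isomorphism.
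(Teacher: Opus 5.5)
Your proof is correct and follows the paper's argument: injectivity comes from $p^\omega=q^\omega\iff p=q$ together with Corollary~\ref{cor:vhardequal}, and meets are preserved via the identity $(1,e)^\omega\cap(1,f)^\omega=(1,ef)^\omega$, where Lemma~\ref{lem:secondorderideal} forces every common lower bound into $Y\iota$. The only cosmetic difference is that you get the inclusion $\supseteq$ directly from the order-embedding, whereas the paper invokes Lemma~\ref{lem:secondorderideal} a second time.
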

\begin{proof} Let $e,f\in Y$ and suppose that $e\kappa=f\kappa$. Then $(1,e)^\omega=(1,f)^\omega$ so that by an earlier comment, 
$(1,e)=(1,f)$. But this says that
$e\iota=f\iota$, and so $e=f$. 

Let $e,f\in Y$. We wish to show that $(1,e)^\omega\cap (1,f)^\omega = (1, e\wedge f)^\omega$. If
$(s,g)\in (1,e)^\omega\cap (1,f)^\omega$, then
$(s,g)\leq (1,e)$ and $(s,g)\leq (1,f)$. By Lemma~\ref{lem:secondorderideal} we have 
$(s,g)=(1,h)$ where $s\cdot g=h\leq e$ and $h\leq f$. Since $Y$ is a semilattice, we get $h\leq e\wedge f$, so that
 by Corollary~\ref{cor:vhardequal} 
we obtain
$(s,g)=(1,h)\in (1, e\wedge f)^\omega$. 

Conversely, if $(t,k)\in (1,e\wedge f)^\omega$, then  by Lemma~\ref{lem:secondorderideal}, we have 
$(t,k)=(1,u)$ where  $u=t\cdot k\leq e\wedge f$. Thus $u\leq e$ and $u\leq f$, giving that
$(t,k)=(1,u)\leq (1,e)$ and $(t,k)\leq (1,f)$, that is, $(t,k)\in (1,e)^\omega\cap (1,f)^\omega$. Consequently, 
\[(1,e)^\omega\cap (1,f)^\omega=
(1,e\wedge f)^\omega.\] 
The lemma now follows.
\end{proof}

Next we define  $\bullet$ by setting, for any $m\in T$ and $I\in\mathcal{X}$,
\[m\bullet I = \{m \diamond (t,e): (t,e) \in I\}^\omega.\]

\begin{lem}\label{lem:slaction}  The pair 
  $(\mathcal{X},\bullet)$ is an order-preserving action of $T$.
 Further, for any $m\in T$ and $(t,e)\in \mathbf{Y}$
\[m\bullet (t,e)^\omega = (mt,e)^\omega.\]
\end{lem}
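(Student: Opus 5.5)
The plan is to verify directly the four things the statement needs: $m\bullet I$ is an order ideal of $\mathbf{Y}$; $1\bullet I=I$ and $m\bullet(n\bullet I)=mn\bullet I$; $I\subseteq J$ implies $m\bullet I\subseteq m\bullet J$; and the formula $m\bullet (t,e)^\omega=(mt,e)^\omega$. The first point is immediate, since $m\bullet I$ is by definition the order ideal generated by a subset of $\mathbf{Y}$, so $m\bullet I\in\mathcal{X}$ and $\bullet$ is a well-defined map $T\times\mathcal{X}\to\mathcal{X}$. Monotonicity is equally immediate: if $I\subseteq J$, then the generating set $\{m\diamond \alpha:\alpha\in I\}$ is contained in $\{m\diamond\alpha:\alpha\in J\}$, and taking order ideals generated preserves inclusion. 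Since the order on $\mathcal{X}$ is inclusion (the meet is intersection), this gives order-preservation.

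For the action axioms, $1\bullet I=\{1\diamond\alpha:\alpha\in I\}^\omega=I^\omega=I$, since $I$ is already an order ideal and $\diamond$ is an action by Lemma~\ref{star well defined}. For composition, I will first prove the auxiliary fact that, for any subset $A\subseteq\mathbf{Y}$,
\[\{m\diamond\alpha:\alpha\in A^\omega\}^\omega=\{m\diamond\alpha:\alpha\in A\}^\omega.\]
The inclusion $\supseteq$ is trivial because $A\subseteq A^\omega$. For $\subseteq$: if $\alpha\leq\beta$ with $\beta\in A$, then $m\diamond\alpha\leq m\diamond\beta$ because $\diamond$ is order-preserving (Lemma~\ref{star well defined}), so every $m\diamond\alpha$ lies below some element of $\{m\diamond\beta:\beta\in A\}$. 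This auxiliary fact is the only point where order-preservation of $\diamond$ is genuinely used, and I expect it to be the crux of the proof. Applying it with $A=\{n\diamond\alpha:\alpha\in I\}$ gives
\[m\bullet(n\bullet I)=\{m\diamond(n\diamond\alpha):\alpha\in I\}^\omega=\{mn\diamond\alpha:\alpha\in I\}^\omega=mn\bullet I.\]

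Finally, the formula in the statement follows from the same auxiliary fact applied with $A=\{(t,e)\}$:
\[m\bullet (t,e)^\omega=\{m\diamond(t,e)\}^\omega=(mt,e)^\omega,\]
using the definition $m\diamond(t,e)=(mt,e)$. I expect no real obstacle beyond keeping the notation straight, in particular distinguishing the ideal $(t,e)^\omega$ from the element $(t,e)$ of $\mathbf{Y}=\Sigma/\tau$.
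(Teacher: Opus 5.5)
Your proposal is correct and follows essentially the same route as the paper. It checks well-definedness, shows $1\bullet I=I^\omega=I$, proves the two inclusions for $mn\bullet I=m\bullet(n\bullet I)$ using that $\diamond$ is order-preserving, gets monotonicity from inclusion of generating sets, and derives the formula for $m\bullet(t,e)^\omega$. The only difference is that you isolate the repeated step $\{m\diamond\alpha:\alpha\in A^\omega\}^\omega=\{m\diamond\alpha:\alpha\in A\}^\omega$ as one auxiliary fact, whereas the paper verifies the corresponding inclusions elementwise separately for composition and for the final formula.
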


\begin{proof} By definition,  if $m\in T$ and $I\in\mathcal{X}$, then $m\bullet I\in\mathcal{X}$.

Let $I\in\mathcal{X}$. Since $1 \diamond (t,e)= (t,e)$ for all $(t,e)\in \mathbf{Y}$,  we have that
\[1\bullet I = \{ 1  \diamond (t,e): (t,e) \in I\}^\omega = 
 \{ (t,e): (t,e) \in I\}^\omega= I^\omega=I,\] 
because $I$ is an order-ideal.

Let $m,n\in T$. If
$(t,e)\in mn\bullet I$, then $(t,e)\leq mn\diamond (s,f)=m\diamond(n\diamond (s,f))$ for 
some $(s,f)\in I$. Certainly $(ns,f)=n\diamond (s,f)\in n\bullet I$ and $(t,e)\leq m\diamond (ns,f)$, whence
$(t,e)\in m \bullet (n\bullet I)$. Conversely, if
$(t,e)\in m \bullet (n\bullet I)$, then $(t,e)\leq m\diamond (u,p)$ for some $(u,p)\in n\bullet I$. Now 
$(u,p)\leq n\diamond (v,q)$ for some $(v,q)\in I$, and  
since the action of $T$ on $\mathbf{Y}$ is order-preserving, we get $(t,e)\leq m\diamond (n\diamond (v,q))=mn\diamond (v,q)$, whence $(t,e)\in mn\bullet I$. It follows that
$mn\bullet I=m \bullet (n\bullet I)$ and we deduce that  $(\mathcal{X}, \bullet)$ is an action of $T$ on  the underlying set of the poset $\mathcal{X}$.  
Moreover, it  is clear from the definition that  the action is order-preserving.

For the final statement, let   $m\in T$ and $(t,e)\in \mathbf{Y}$.  If
 $(s,f)\in (mt,e)^\omega$  then
 $(s,f)\leq (mt,e)=m\diamond (t,e)$ and certainly
 $(t,e)\in (t,e)^\omega$, hence $(s,f)\in m\bullet (t,e)^\omega$. Conversely, if
 $(s,f)\in m\bullet (t,e)^\omega$, we have 
$(s,f)\leq m\diamond (u,p)$ for some $(u,p)\in (t,e)^\omega$. Thus
$(u,p)\leq (t,e)$, and so
$(s,f)\leq m\diamond (t,e)=(mt,e)$. Therefore  $(s,f)\in (mt,e)^\omega$. Consequently, 
$m\bullet (t,e)^\omega = (mt,e)^\omega$. 
\end{proof}

We can now complete the proof of Theorem~\ref{them:slglobal}. 

\begin{proof} 
From Lemmas~\ref{EYiso} and \ref{lem:slaction},  The pair 
  $(\mathcal{X},\bullet)$ is an order-preserving action of $T$ and  (with some abuse of notation) $\kappa:Y\rightarrow \mathcal{X}$ is a semilattice embedding. 
Let $t\in T$ and $x\in Y$. Using Lemma~\ref{lem:slaction} 
and then Lemma~\ref{lem:harderequal},  we obtain
\[\begin{array}{rcl}
t\bullet x\kappa\in Y\kappa&\Leftrightarrow&
t\bullet (1,x)^\omega=(1,y)^\omega\mbox{ for some }y\in Y\\
&\Leftrightarrow&(t,x)^\omega=(1,y)^\omega\mbox{ for some }y\in Y\\
&\Leftrightarrow&(t,x)=(1,y)\mbox{ for some }y\in Y\\
&\Leftrightarrow& \exists t\cdot x\mbox{ and }t\cdot x=y \mbox{ for some }y\in Y.
    \end{array}\]
    It then follows that 
    \[(t\cdot x)\kappa=(1, t\cdot x)^\omega=(t,x)^\omega=t\bullet (1,x)^\omega=t\bullet x\kappa.\]
  Thus $(\kappa,\mathcal{X}, \bullet)$ is indeed a globalisation as required.  This completes the proof of the theorem.
 \end{proof}

 As promised, we give a  few comments on the development of the theory of globalisation of partial actions.  The notion of a partial action of a group was formally introduced in \cite{exel:1998} and further developed in \cite{kellendonk:2004}. The partial action of a group is always {\em strong}, in the sense that it can be globalised, and is locally invertible. In \cite{kellendonk:2004} it is shown that an order-preserving partial action of a group on a semilattice may be globalised to an action of a group on a semilattice by order-preserving maps (which must then be semilattice morphisms). A similar result, starting with a locally invertible partial action of a monoid on a semilattice, and globalising to an order-preserving action on a poset, is given by Kudryavtseva in \cite{kudryavtseva:2015}, where it is noted that the genesis of the construction may be seen as far back as \cite{munn:1976}. Our approach shares several aspects of those quoted, but is necessarily  more involved: we start with a poset and with no assumption of invertibility in our action.

Occasionally in later sections we also require the notion of a semigroup action. For convenience we state it here.

\begin{defn}\label{defn:saction} Let $S$ be a semigroup and let $X$ be a set. Then $S$ {\em acts} on $X$ (on the left) if there is a map $S\times X\rightarrow X $, such that for all $s, t \in S$ and $x \in X$ we have
$s\cdot (t\cdot x)=st\cdot x$.\end{defn}

The notion of an order-preserving semigroup action may similarly be obtained from that for a monoid.

\section{ Decompositions of  left Ehresmann monoids} \label{sec:H-proper}

In this section we introduce the class of left Ehresmann monoids with a proper basis,  which are the focus of this article. It will transpire that these monoids are in fact $*$-left Ehresmann.  The motivation comes from looking at how a proper inverse semigroup embeds into a semidirect product of a group with a semilattice. As in that case,  we must examine the action of a left  Ehresmann monoid  $M$ on the semilattice $E$ of projections, and also the partial action of the monoid
$M/\sigma$ on $E$. The difference here is that the (partial) actions are only order-preserving, rather than by morphisms. This leads us to  submonoids of  monoids of the form $\mathcal{P}_{\ell}(T,E)$,  defined in Section~\ref{sec:Q}. (The simpler notation $\mathcal{P}(T,X)$ is used in \cite{GG}. In view of subsequent work on (two-sided) Ehresmann monoids using the same convention, we prefer here  $\mathcal{P}_{\ell}(T,X)$.) The latter play the role held by semidirect products in the case of inverse semigroups, and are an  analogy thereof. The next result is well known;  a proof may be found in \cite{BGG}.  

\begin{lem}\label{lem:naturalaction}   Let $M$ be a left Ehresmann monoid with submonoid $T$. Then $T$ acts on $E$ on the left via order
preserving maps by
\[(t, e)\mapsto t \cdot e = (te)^+.\] \end{lem}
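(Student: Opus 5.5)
We have to check three things: that $t\cdot e=(te)^+$ lands in $E$, that it defines a monoid action of $T$ on $E$, and that each map $e\mapsto t\cdot e$ is order-preserving. All of these should follow from the identities (\ref{eqn:le}) and Lemma~\ref{lem:folk}.

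Since $(te)^+$ is of the form $a^+$, it lies in $E$, so the map $T\times E\to E$ is well defined. For the identity of $T$ (which is the identity of $M$, as $T$ is a submonoid), we have $1\cdot e=e^+=e$. Here we use $(x^+)^+=x^+$ from Lemma~\ref{lem:folk}, together with the fact that every $e\in E$ has the form $e=a^+$.

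For compatibility, take $s,t\in T$ and $e\in E$. Then
\[s\cdot(t\cdot e)=(s(te)^+)^+=(ste)^+=st\cdot e.\]
The middle equality is the fourth identity $(xy)^+=(xy^+)^+$ of (\ref{eqn:le}), applied with $x=s$ and $y=te$. This is the only place where the left-congruence property of the kernel of $+$ enters, and it is exactly the final claim of Lemma~\ref{lem:folk}.

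For order preservation, suppose $e\le f$ in $E$, so that $e=ef$. Then
\[t\cdot e=(tef)^+=(te\,f)^+.\]
We want to compare this with $(tf)^+$, and we do so via the left identity property. We have $(tf)^+tf=tf$, so
\[(tf)^+te=(tf)^+tfe=tfe=te,\]
using $e=fe$ (projections commute). Thus $(tf)^+$ is a left identity for $te$ lying in $E$. By Lemma~\ref{lem:folk}, $(te)^+$ is the least such left identity, so $(te)^+\le (tf)^+$, that is, $t\cdot e\le t\cdot f$.

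None of these steps should be a real obstacle. The only point needing care is the order-preservation step, where one must use minimality of $a^+$ among left identities from $E$ rather than try to manipulate $+$ directly. The action is generally not by semilattice morphisms, since the ample identity is absent, but only order preservation is claimed.
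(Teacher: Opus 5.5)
Your proof is correct and complete. The paper gives no proof of its own here and simply cites \cite{BGG}; your direct check is the standard one: the action axioms come from $(x^+)^+=x^+$ and $(xy)^+=(xy^+)^+$, and order preservation comes from showing that $(tf)^+$ is a left identity for $te$ and then using the minimality of $a^+$ from Lemma~\ref{lem:folk}.
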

Note that in Lemma~\ref{lem:naturalaction}  the underlying set of the monoid $T$ could be that of $M$.

\subsection{Atomic subsets}

Let $M = \langle E \cup T \rangle_{(2)}$ be a left Ehresmann monoid. Since every element of $M$ is a product of elements from $E$ and $T$, and  the identity of $M$ lies in  both $E$ and $T$, we may write any element of $M$ as an alternating product of elements of $E$ and $T$, choosing to start and end with elements of $E$ or $T$. It follows that  $M=\langle H\rangle_{(2)}=\langle H^r\rangle_{(2)}$
where 
\[H=\{te: t \in T, e \in E\} \mbox{ and }H^r=\{et: t \in T, e \in E\}. \] 
It transpires, somewhat surprisingly given the motivation from semidirect products, that it is the first formulation that is the most useful. Suppose now that in addition $M$  admits uniqueness of $T$-normal forms, so that from Theorem~\ref{rightAdequate}, $M$ is also $*$-left Ehresmann. 
It will follow from a more general result in Section~\ref{sec:Q} that the subset $H$ not only generates $M$ (as a semigroup) but satisfies a number of special conditions: we call such subsets {\em atomic}. We choose the term since elements of $M$ will also admit a unique decomposition as a product of elements of $H$ satisfying conditions akin to those in the definition of $T$-normal form;  we will later refer to an atomic generating set with this latter property as a basis.

\begin{defn}\label{*-subset} Let $H\subseteq M$ where $M$ is a left Ehresmann  monoid.  We say that  $H$ is a $*$-{\it subset} of $M$ if  for every $h\in H$ there is an element in $E$, denoted by $h^*$, such that $h^*$ is the minimal right identity for $h$ from $E$.
\end{defn}

In Definition~\ref{*-subset}  we have that $hh^*=h$ and if $he=h$, where $e\in E$, then $h^*e=h^*$. Our aim is to show that with certain conditions on $H$, we can extend the $^*$-operation to $M$, in such a way that $M$ becomes $*$-left Ehresmann. Notice that $e^*=e$ for all $e\in E\cap H$. 
\begin{defn}\label{goodset}
Let $M$ be a left Ehresmann  monoid. A $*$-subset $H$ of  $M$ is said to be {\it pre-atomic}   if  the following conditions hold:
\begin{enumerate}
\item[(H1)] $E \subseteq H$;
\item[(H2)]  if $h \in H$ and $e \in  E$, then $he \in H$  and $(he)^\ast = h^\ast e$;
\item[(H3)]  if $h \in H$ and $k \in H\setminus E$, then  $h^{\ast} \geq k^{+}$ implies that $hk\in H$ and  $(hk)^\ast = k^\ast$;
\end{enumerate}

and it is said to be
 {\it atomic}  if  the next conditions also  hold:
\begin{enumerate}
\item[(H4)] for all $m \in M$ there exists $h \in H$ such that $m\sigma = h\sigma$;
\item[(H5)] if $h, k, w \in H$ with $hk\ \sigma\ w$ and $k^\ast = w^{\ast}$ then there exists $u \in H$ such that $u\ \sigma\ h$ and $u^{\ast} \geq k^+$.
\end{enumerate}
\end{defn}
 We illustrate this notion with  examples  from the theory of inverse monoids and left restriction monoids, with the usual $*$-operation.

\begin{ex}\label{ex:inverse2} Let $M$ be an inverse monoid. Then $M$ is an  atomic  subset of $M$.  To see this, recall from Example~\ref{ex:inverse} that $M$ is Ehresmann with $E=E(M)$
 and $m^*=m^{-1}m$.  Clearly (H1)  and (H4) hold. Since $M$ satisfies the identity $(xy)^*=(x^*y)^*$  we also have that (H2) and (H3) hold. Recalling that $M/\sigma$ is a group, if $h,k,w\in M$ satisfy the hypotheses of (H5), then $h\ \sigma \, hk^+=hkk^{-1}\,\sigma\, wk^{-1}$ and
$(wk^{-1})^*=kw^*k^{-1}=k^+$, so that (H5) holds with $u=wk^{-1}$.
\end{ex}

\begin{ex} \label{ex:spsd} Let $M$ be a left cancellative monoid and consider the { {\em subset expansion} $\mathcal{S}(M)$  \cite{GJ}, which is the semidirect product} obtained from the pointwise action of $M$ on the set   $\mathcal{P}_{\ell}(M)$ of subsets of $M$. We have 
\[\mathcal{S}(M)=\{ (A,a):A\subseteq M, a\in M\}\]
and with operations
\[(A,a)(B,b)=(A\cup aB,ab),\, (A,a)^+=(A,1)\mbox{ and }(A,a)^*=(a^{-1}A,1),\]
where $a^{-1}A=\{ c:ac\in A\}$. 
From \cite{GJ}, the monoid  $\mathcal{S}(M)$ is  $*$-left  Ehresmann (indeed, it is left restriction),  with $E=\mathcal{P}_{\ell}(M) \times \{1\}$ and further,
\[(A,a)\,\sigma\, (B,b)\mbox{ if and only if }a=b.\]

Again we claim that $M$ is  an  atomic  subset of $M$. We need only check (H2), (H3) and (H5). 

Let  $(A,a),(B,b)\in \mathcal{S}(M)$ so that
$((A,a)(B,b))^*=((ab)^{-1}(A\, \cup aB),1)$. Suppose that
$(A,a)^*\geq (B,b)^+$, that is, $(a^{-1}A,1)\geq (B,1)$, equivalently  $a^{-1}A\subseteq B$. Then,
 using the left cancellativity of $M$,  we have
\[\begin{array}{rcl}
c\in (ab)^{-1}(A\cup aB)&\Leftrightarrow& abc\in A\cup aB\\
&\Leftrightarrow& abc\in A\mbox{ or }abc\in aB\\
&\Leftrightarrow& bc\in a^{-1}A\mbox{ or }bc\in B\\
&\Leftrightarrow& bc\in B\\
&\Leftrightarrow& c\in b^{-1}B.\end{array}
\]
It follows that $((A,a),(B,b))^*=(B,b)^*$ and then (H3) holds. A similar calculation with $b=1$ gives (H2). To prove (H5), let  $(A,a),(B,b)\in M$  and 
consider $(aB,a) $. Then $(aB,a)\,\sigma\,  (A,a)$ and
$(aB,a)^*=(B,1)=(B,b)^+$.
\end{ex}

 \begin{rem}\label{rem:otherex} Any monoid biunary subsemigroup $S$ of $\mathcal{S}(M)$ will also satisfy (H1) to (H4) with $H=S$.  Examples abound such as 
\[\begin{array}{rcl}
\mathcal{S}_f(M)&=&\{ (A,a)\in \mathcal{S}(M):|A|<\infty\}\\
$Sz$(M)&=&\{ (A,a)\in \mathcal{S}(M): 1,a\in A,\, |A|<\infty\}\\
\FLA(X)&=&\{ (A,a)\in \mathcal{S}(X^*): a\in A,\, |A|<\infty \mbox{ and }A=A^{\downarrow}\}.\end{array}\]
Here for a set  $X$ and for $U\subseteq X^*$ we denote by $U^{\downarrow}$  the set of prefixes of elements of $U$. The monoid Sz$(M)$ is the {\em Szendrei expansion} of $M$ (see \cite{Sz}) and $\FLA(X)$ is the free left ample monoid on $X$ (see \cite{F3}). We note that the identity of 
 $\FLA(X)$ is $(\{ 1\},1)$. 

  It is easy to see that in these three cases  we have subsemigroups closed under $+$, and the first two share the identity of $\mathcal{S}(M)$. A straightforward calculation gives that  $a^{-1}A$ is finite if $A$ is, contains $1$ if $a\in A$ and is prefix closed if $M=X^*$ and $A$  is. It then follows that in each case,  the given semigroup is closed under $*$ too. 
  
  It is also clear that $\mathcal{S}_f(M)$ satisfies (H5), using $(aB,a)$ again.
When $M$ is free, 
then Sz$(M)$ satisfies (H5), and so does $\FLA(X)$ where  we replace  
$(aB,a)$  by $(a^{\downarrow}\cup aB,a)$ in both cases. 
\end{rem}

We now utilise the structure of $\FLA(X)$ to a greater degree, to exhibit the  behaviour we will see for  FLE$(X)$.

\begin{ex}\label{ex:frela}  We  consider $\FLA(X)$ for some set $X$. For $a\in X^*$ we put
$\overline{a}=(a^{\downarrow},a)$ and we abbreviate
$(B,1)\in E$ by $\overline{B}$. Let 
\[H=\{ \overline{a}\, \overline{B}: a\in X^*, (B,1)\in E\}\subseteq \FLA(X),\]
noting that $\overline{a}\, \overline{B}=(a^{\downarrow}\cup aB,a)$ and
$X^*\cong \{ \overline{a}:a\in X^*\}$.
We claim that $H$ is  an    atomic  subset of $FLA(X)$. 

  It is easy to check that $\overline{a}\, \overline{B}=\overline{c}\, \overline{D}$ 
if and only if $a=c$ and $B=D$, so that elements of $H$   have a unique decomposition in the given form.  To compute $((a^{\downarrow},a)(B,1))^*$ notice that  
$a^{-1}(a^{\downarrow}\cup aB)=a^{-1}(aB)=B$ so that $(\overline{a}\, \overline{B})^*=\overline{B}$. 

Clearly (H1) holds since
$\overline{B}=\overline{1}\, \overline{B}$. To see that (H3) holds, let $\overline{a}\, \overline{B}\in H$ and  $\overline{c}\, \overline{D} \in H\setminus E$,
so that $c\neq 1$. If $\overline{B} =(\overline{a}\, \overline{B})^*\geq (\overline{c}\, \overline{D})^+$
then $(\overline{a}\, \overline{B})(\overline{c}\, \overline{D})=\overline{a}\, \overline{c}\overline{D}=
\overline{ac}\, \overline{D}\in H$ and
$(\overline{ac}\, \overline{D})^*=\overline{D}=(\overline{c}\, \overline{D})^*$.
The argument for (H2) is similar and (H4) follows from the description of $\sigma$ in Example~\ref{ex:spsd} (inherited by $\FLA(X)$). To see that (H5) holds, it is sufficient to notice that for any 
$\overline{a}\, \overline{B}\in H$, we have
$\overline{a}\, \overline{B} \,\sigma\, \overline{a}$ and $\overline{a}^*=1_{\FLA(X)}\geq (\overline{c}\, \overline{D})^*=\overline{D}$ 
for any $\overline{c}\, \overline{D}\in H$. 
\end{ex}

\subsection{$H$-proper}\label{sub:proper}

 We introduce the concept of $H$-proper for a left Ehresmann monoid $M$ with generating $*$-subset $H$, by analogy with the concept of proper for inverse and (left) restriction monoids. The surprise here is that we use $*$ and not $+$.

\begin{defn}\label{defn:hproper} 
Let $M$ be a left Ehresmann monoid and  let $H$ be $*$-subset of $M$. Then $H$ is {\em proper} if for all  $h, k \in H$,  \[h^*=k^*\mbox{ and }h\,\sigma\, k\mbox{ if and only if }h=k.\]
We say that  $M$ 
is {\em $H$-proper} if $M=\langle H\rangle_{(2)}$ for some proper subset $H$.\end{defn}

The reader should compare the above with \cite[Definition 3.4]{KL} in the two-sided case, where the aim is to find `matching factorisations'. In this one-sided case, we must take a different approach.

\begin{rem} \label{ex:proper} We return to Examples~\ref{ex:inverse2}, \ref{ex:spsd} and \ref{ex:frela}. An inverse monoid with $H=M$ is proper if and only if it is proper (or, $E$-unitary), as an inverse monoid. For $\mathcal{S}(M)$ with $M$ left cancellative and $H=\mathcal{S}(M)$ we can certainly find pairs $(A,a),(B,b)$ such that  
$(A,a)\,\sigma\, (B,b)$, that is, $a=b$, and
$(A,a)^*=(B,b)^*$ without $A=B$ (take $A,B$ with $A\neq B$ and $a^{-1}A=a^{-1}B=\emptyset$), whence $\mathcal{S}(M)$ is not $H$-proper. However, these examples are all proper as {\em left restriction monoids}, that is if $(A,a)\,\sigma\, (B,b)$ and $(A,a)^+=(B,b)^+$, then $(A,a)=(B,b)$.  On the other hand,
in Example~\ref{ex:frela} with $H$ as given, if  
$\overline{a}\, \overline{B}\,\sigma\,\overline{c}\, \overline{D}$, that is, $a=c$, and $(\overline{a}\, \overline{B})^*=(\overline{c}\, \overline{D})^*$, that is, $B=D$, then $\overline{a}\, \overline{B}=\overline{c}\, \overline{D}$, so that $\FLA(X)$ is $H$-proper.
\end{rem}

\begin{lem}\label{lem:proper}   Let $M$ be  a left Ehresmann monoid with a $*$-subset $H$ that satisfies condition (H2).
Then $H$ is proper if and only if for all $h,k\in H$, 
\[h\,\sigma\, k\mbox{ if and only if }hk^*=kh^*.\] 
\end{lem}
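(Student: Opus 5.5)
We prove the two implications separately. Both use (H2) to get $hk^*,kh^*\in H$ with
\[(hk^*)^*=h^*k^*=k^*h^*=(kh^*)^*,\]
together with the fact that $\sigma$ identifies all projections. So $hk^*\,\sigma\, h$ and $kh^*\,\sigma\, k$ for all $h,k\in H$.

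Suppose first that $H$ is proper, and let $h,k\in H$. If $h\,\sigma\,k$, then $hk^*\,\sigma\,h\,\sigma\,k\,\sigma\,kh^*$. Both $hk^*$ and $kh^*$ lie in $H$ and have the same $*$-value $h^*k^*$, so properness gives $hk^*=kh^*$. Conversely, if $hk^*=kh^*$, then $h\,\sigma\,hk^*=kh^*\,\sigma\,k$, so $h\,\sigma\,k$. This gives the required equivalence.

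Now suppose that for all $h,k\in H$ we have $h\,\sigma\,k$ if and only if $hk^*=kh^*$. Let $h,k\in H$ with $h^*=k^*$ and $h\,\sigma\,k$. Then $hk^*=kh^*$. Since $h^*=k^*$, the left side is $hk^*=hh^*=h$ and the right side is $kh^*=kk^*=k$. Hence $h=k$. The converse direction of properness, that $h=k$ implies $h^*=k^*$ and $h\,\sigma\,k$, is trivial. So $H$ is proper.

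The only delicate point is the use of (H2) in the first direction: it is what puts $hk^*$ and $kh^*$ inside $H$ and computes their $*$-values, so that properness applies to them. The equality $h^*k^*=k^*h^*$ holds because $E$ is a semilattice (Lemma~\ref{lem:folk}). Beyond this the argument is a direct computation.
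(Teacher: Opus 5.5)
Your proof is correct and follows essentially the same route as the paper: (H2) places $hk^*$ and $kh^*$ in $H$ with equal $*$-values $h^*k^*$, properness then forces them to be equal, and the converse uses $h=hh^*=hk^*=kh^*=kk^*=k$. Your explicit justification that $hk^*\,\sigma\,h$, because $k^*\,\sigma\,1$, is the detail the paper leaves implicit.
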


\begin{proof}  Suppose that $H$ is proper. Let $h,k\in H$. If $hk^*=kh^*$ then clearly $h\,\sigma\, k$.
Assume now that $h\,\sigma\, k$. Then $hk^*\,\sigma\, kh^*$. From (H2)  we have $hk^*,kh^*\in H$ and
$(hk^*)^*=h^*k^*=k^*h^*=(kh^*)^*$. As $H$  is proper,  we obtain $hk^*=kh^*$. 

Conversely, assume  that for all $h,k\in H$ we have
$h\,\sigma\, k$ if and only if $hk^*=kh^*$. Suppose that $h,k\in H$, $h^*=k^*$ and $h\,\sigma\, k$. 
Then $h=hh^*=hk^*=kh^*=kk^*=k$, and so $H$ is proper. 
    \end{proof}

\begin{cor}\label{cor:hSigma1_M}   Let $M$ be  a left Ehresmann monoid with a $*$-subset $H$ that  is proper and   satisfies condition (H2).
Then for all $h \in H$,  \[h\, \sigma\, 1\mbox{ if and only if }h \in E.\]
\end{cor}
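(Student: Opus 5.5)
The plan is to apply Lemma~\ref{lem:proper} with $k=1$. For this we need $1\in H$ with $1^*=1$.

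By (H1) we have $E\subseteq H$, so $1\in E\subseteq H$. For an element of $E\cap H$ we noted after Definition~\ref{*-subset} that $e^*=e$. Indeed, $e$ is a right identity for itself, and if $ef=e$ with $f\in E$ then $e\leq f$. Hence $e$ is the least right identity for $e$ from $E$, and in particular $1^*=1$. Since $H$ is proper and satisfies (H2), Lemma~\ref{lem:proper} gives, for every $h\in H$,
\[h\,\sigma\,1 \iff h1^*=1h^* \iff h=h^*.\]

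For the forward direction, suppose $h\,\sigma\,1$. Then $h=h^*\in E$.

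For the converse, let $h\in H\cap E$. Then $(h,1)\in E\times E\subseteq\sigma$ by the definition of $\sigma=\sigma_E$, so $h\,\sigma\,1$. (Alternatively, $h^*=h$ and Lemma~\ref{lem:proper} gives the same conclusion.)

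The only point needing care is the identity $1^*=1$ and the membership $1\in H$, both of which follow from (H1); everything else is immediate.

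Strictly, the hypotheses listed in the corollary do not include (H1). If (H1) is unavailable, I would argue directly instead. Given $h\,\sigma\,1$, apply Lemma~\ref{lem:proper} to $h$ and $h^*\in H$. Membership $h^*\in H$ needs some justification; it would come from (H2) applied to $h$ and $e=h^*$ only if $E\subseteq H$. So I would read the corollary as implicitly assuming (H1), i.e. $1\in H$, which is the intended context.
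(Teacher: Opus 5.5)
Your proof is the paper's: apply Lemma~\ref{lem:proper} with $k=1$ to get $h=h1^*=1h^*=h^*$, and the converse follows at once from $E\times E\subseteq\sigma$. You are also right that this tacitly needs $1\in H$ (e.g.\ via (H1)), which the paper's proof uses without comment and which cannot be dropped. For instance, let $T=\{1,a\}$ with $a^2=1$ act trivially on the chain $X=\{0<e<1\}$; in $\mathcal{P}_\ell(T,X)$ the set $H=\{aea,e,0\}$ is a proper $*$-subset satisfying (H2), yet $aea\,\sigma\,1$ and $aea\notin E$.
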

\begin{proof}
Suppose that $h \in H$. Clearly if $h \in E$ we have  $h\, \sigma\, 1$. Conversely, if $h\, \sigma\, 1$, then from Lemma~\ref{lem:proper} as $H$ is proper we have $h1^* = 1h^*$, that is, $h =h^*$. So $h \in E$.
\end{proof}

\subsection{$H$-canonical forms  and bases}\label{sub:h-simple}
The following observation lies at the root of our next concept.
Let $M = \langle E \cup T \rangle_{(2)}$  be a left Ehresmann monoid which admits uniqueness of $T$-normal forms, so that $M$ is $*$-left Ehresmann by Theorem ~\ref{rightAdequate}.
 
 Any $m \in M$ can be written in $T$-normal form, that is $m=t_0e_1t_1\dots e_nt_n\in M$ with $n \geq 0$, $e_1, \dots, e_n \in E\setminus \{1\}$, $t_1, \dots, t_{n-1} \in T\setminus \{1\}$, $t_0, t_n \in T$ and $e_i<(t_ie_{i+1}\dots e_nt_n)^{+}$ for $1 \leq i \leq n$. Thus putting
\[H=\{te: t \in T, e \in E\}\]
we get $M=\langle H\rangle_{(2)}$.

Also, recall from Remark~\ref{rem:TTN} that the latter condition is equivalent to saying that $e_n<t_n^+$ and   $e_i < (t_ie_{i+1})^+$, for all $1 \leq i < n$. With $m$ as given, note that  $t_{i-1}e_i1$ is in   $T$-normal form for $1\leq i\leq n$. By Theorem~\ref{rightAdequate}, we have
$(t_{i-1}e_i)^* = e_i$ for $1\leq i\leq n$.
Hence 
\[(t_0e_1)^* < (t_1e_2)^+,\ (t_1e_2)^* < (t_2e_3)^+,\ \dots,\ (t_{n-1}e_n)^* < t_n^+ = (t_n1)^+.\]

 Consider $t_ie_{i+1}$ where $1 \leq i \leq n-1$. If $t_ie_{i+1}\in E$ then $t_i\,\sigma \, t_ie_{i+1}\, \sigma\,  1$, so that $t_i=1$ from Proposition~\ref {M/Sigma-T}, contradicting the fact that $t_i\neq 1$. Hence $t_ie_{i+1}\notin E$.

The above considerations lead us to present the  next notion.

\begin{defn}\label{defn:hnormal}  Let $M=\langle H\rangle_{(2)}$  be a left Ehresmann monoid  where  $H$ is a $*$-subset.
 An element $m = h_1\dots h_n$  of $M$ is in {\em $H$-canonical form} (with length $n$) if $h_1, \dots, h_n \in H$,  
\[h_1^{\ast} < h^{+}_2, \dots, h^{\ast}_{n-1} < h^{+}_n\]
and  $h_i\notin E$ for  $2 \leq i \leq n$. 
 If $n$ is least for the element $m$  then we say $ h_1\dots h_n$ is a {\em short} $H$-canonical form for $m$.
\end{defn}

In Definition~\ref{defn:hnormal}  if $H$ is 
 pre-atomic  and proper then  Corollary~\ref{cor:hSigma1_M} tells us the condition that  $h_i\in E$ is equivalent to the condition that $h_i\,\sigma\, 1$. 
Similarly to our convention for $T$-normal form, strictly speaking  we should distinguish the element $h_1\dots h_n$ of $M$ from the sequence $h_1,h_2,\dots,h_n$; however, we do not take this step and no ambiguity should arise.

{\begin{defn}\label{defn:hcanonical} Let $M=\langle H\rangle_{(2)}$  be a left Ehresmann monoid  where  $H$ is a  $*$-subset.  
 If an element  of $M$ has a unique expression in $H$-canonical form, this is called  its {\em $H$-canonical form}.  When every  element of $M$ has such a unique expression, we say that $M$ has {\em $H$-canonical forms}. 
\end{defn}

{\begin{defn}\label{defn:basis}  Let $M=\langle H\rangle_{(2)}$  be a left Ehresmann monoid  where  $H$ is a  $*$-subset. 
  If $H$ is (pre-)atomic and $M$ has {\em $H$-canonical forms} then we say that $H$ is a {\em (pre-)basis} for $M$.
\end{defn}

\begin{rem}\label{ex:notgood}  We briefly return to Examples~\ref{ex:inverse2}, \ref{ex:spsd} and \ref{ex:frela} in the light of $H$-canonical forms. 
If $H=M$ as in Examples~\ref{ex:inverse2} and \ref{ex:spsd}
then we do not have   $M$-canonical forms (although we trivially have uniqueness of short $M$-canonical forms). 
In Example~\ref{ex:frela}, for $\FLA(X)$, we observe that we do not even have that   short $H$-canonical forms are unique. To see this, consider $X=\{ x,y\}$ and
$(A,x^2)\in \FLA(X)$ where $A=\{ 1,x,x^2,xy\}$.
Then $(A,x^2)=(\overline{1}\, \overline{A})(\overline{x^2}\, \overline{1})$ and $(\overline{1}\, \overline{A})^*=\overline{A}< \overline{\{1,x,x^2\}}
=(\overline{x^2}\overline{1})^+$.  But also  $(A,x^2)=(\overline{x}\overline{\{ 1,x,y\}})(\overline{x}\overline{\{ 1\}})$ and 
$(\overline{x}\overline{\{ 1,x,y\}})^*=\overline{\{ 1,x,y\}}< \overline{\{ 1,x\}}=(\overline{x}\overline{\{ 1\}})^+$.   We will see that in the corresponding case of  FLE$(X)$ we do have uniqueness of $H$-canonical forms for a directly corresponding $H$; essentially this is because the ample identity is not working behind the scenes. We comment further in Remark~\ref{rem:ample}. 
\end{rem}

We now explain a process to produce an $H$-canonical form from a product of elements in an pre-atomic subset $H$.

\begin{lem}\label{h+m-Hormal form}
 Let $M=\langle H\rangle_{(2)}$  be a left Ehresmann monoid  where  $H$ is a  pre-atomic $*$-subset. 
 Suppose that $h \in H$ and $m = h_1h_2\dots h_n\in M$ is in $H$-canonical form. Then $hm$ can be written as an expression in $H$-canonical form as follows:
$$ hm=
	\begin{cases}
        (hh_1)h_2\dots h_n & \mbox{if\ either}\ h_1\in E\mbox{ or}\ (h_1\notin E\ \text{and}\ h^*h_1^+= h_1^+)\\
        (hh_1^+)h_1\dots h_n & \mbox{if}\  h_1\notin E\mbox{ and } h^*h_1^+< h_1^+.\\
	\end{cases}$$
\end{lem}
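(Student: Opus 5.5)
The plan is a direct case check. In each case I verify two things: the displayed product equals $hm$, and it satisfies the conditions of Definition~\ref{defn:hnormal}. The latter means every factor lies in $H$, each non-initial factor lies outside $E$, and each consecutive pair satisfies the strict inequality $(\text{left factor})^*<(\text{right factor})^+$. Throughout, $h_2,\dots,h_n\notin E$ and $h_i^*<h_{i+1}^+$ for $1\le i<n$ are given by the hypothesis that $m$ is in $H$-canonical form. So only the new first factor, and its link with the next factor, need attention.

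\textbf{Case $h_1\in E$.} Here (H2) gives $hh_1\in H$ with $(hh_1)^*=h^*h_1\le h_1=h_1^*<h_2^+$ when $n\ge 2$. Hence $(hh_1)h_2\cdots h_n$ is in $H$-canonical form, and it clearly equals $hm$. If $n=1$ there is nothing further to check.

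\textbf{Case $h_1\notin E$ and $h^*h_1^+=h_1^+$.} The equality says $h^*\ge h_1^+$. Since $h_1\in H\setminus E$, (H3) gives $hh_1\in H$ and $(hh_1)^*=h_1^*<h_2^+$. Again the product is in $H$-canonical form. The first factor is permitted to lie in $E$, so nothing more is required.

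\textbf{Case $h_1\notin E$ and $h^*h_1^+<h_1^+$.} Since $h_1=h_1^+h_1$, we have $hm=(hh_1^+)h_1h_2\cdots h_n$. By (H1) and (H2), $hh_1^+\in H$ with $(hh_1^+)^*=h^*h_1^+<h_1^+$. The factor $h_1$ now sits in position two and is not in $E$, as required. The remaining links are inherited from $m$, so this is again an $H$-canonical form.

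The only delicate point, which I expect to be the nearest thing to an obstacle, is the edge case $n=0$, where $m=1$. The empty-product convention must be handled: here I would take $h_1=1\in E$ if $m$ is written as $1$, so that the first case applies and gives $h\cdot 1=h$. Beyond that, the three cases are exhaustive. This holds because $h^*h_1^+\le h_1^+$ always, so either equality or strict inequality occurs when $h_1\notin E$.
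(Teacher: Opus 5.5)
Your proposal is correct and follows the paper's proof essentially verbatim. It uses the same three-case split, with (H2) handling the cases $h_1\in E$ and $h^*h_1^+<h_1^+$, and (H3) handling $h^*\geq h_1^+$; the $n=0$ edge case needs no separate treatment, since $H$-canonical forms have length at least one ($1\in E\subseteq H$), but your handling of it is harmless.
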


\begin{proof} We have three cases, as follows.

(i) If $h_1\in E$ then by  (H2), we get $hh_1 \in H$ and $(hh_1)^*=h^*h_1\leq h_1=h_1^* < h_2^+$. So $(hh_1)h_2\dots h_n$ is in $H$-canonical form.

 (ii) If  $h_1\notin E $ and $h^*h_1^+ = h_1^+$, i.e. $ h^*\geq h_1^+ $,  then by (H3) we get  $hh_1 \in H$  and $(hh_1)^*=h_1^* < h_2^+$. So  in this case we again have $(hh_1)h_2\dots h_n$ is in $H$-canonical form.
 
 (iii) If $h_1\notin E$ and $h^*h_1^+ < h_1^+$,  then again using (H2) we have $(hh_1^+)h_1\dots h_n$ in $H$-canonical form.
\end{proof}

Let $M=\langle H\rangle_{(2)}$ be a left Ehresmann monoid with pre-atomic $*$-subset $H$.  Suppose that  $m = h_1h_2\dots h_n \in M$ where $h_1, h_2, \dots, h_n \in H$. \, Lemma~\ref{h+m-Hormal form} effectively gives  an algorithm for reducing $m$ to an expression in $H$-canonical form.

Clearly $h_n$ is in $H$-canonical form. Working from right to left, assuming we have written $h_i\dots h_n$ in $H$-canonical form, say as
$k^i_1\dots k^i_{n(i)}$, then we use the above process to write $h_{i-1}k^i_1\dots k^i_{n(i)}$ in $H$-canonical form as $k^{i-1}_1\dots k^{i-1}_{n(i-1)}$.

The next corollary follows directly from Lemma~\ref{h+m-Hormal form} 
 using induction on $n$.

\begin{cor}\label{cor:short}  
Let $M=\langle H\rangle_{(2)}$ be a left Ehresmann monoid where $H$ is a  pre-atomic $*$-subset. Suppose that  $m = h_1h_2\dots h_n$ where $h_i\in H$ for $1\leq i\leq n$. Then $m$ has an $H$-canonical form $m=k_1\dots k_p$ where $p\leq n$. 
\end{cor}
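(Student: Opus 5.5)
We argue by induction on $n$, using Lemma~\ref{h+m-Hormal form} repeatedly as in the right-to-left algorithm described just before the statement. The invariant we carry along is the following. For $1\leq i\leq n$, the suffix $h_i h_{i+1}\dots h_n$ has an $H$-canonical form $k^i_1\dots k^i_{n(i)}$ with $n(i)\leq n-i+1$.

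For the base case $i=n$, the single element $h_n\in H$ is trivially in $H$-canonical form of length $1$. There are no conditions of the form $h_j^*<h_{j+1}^+$ to check, and the requirement that $h_j\notin E$ applies only for $j\geq 2$.

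For the inductive step, suppose $h_i\dots h_n=k^i_1\dots k^i_{n(i)}$ is in $H$-canonical form with $n(i)\leq n-i+1$. We apply Lemma~\ref{h+m-Hormal form} with $h=h_{i-1}$ and $m=k^i_1\dots k^i_{n(i)}$. This gives an $H$-canonical form for $h_{i-1}h_i\dots h_n$ that is one of the following:
\begin{itemize}
\item the expression $(h_{i-1}k^i_1)k^i_2\dots k^i_{n(i)}$, of length $n(i)$;
\item the expression $(h_{i-1}(k^i_1)^+)k^i_1\dots k^i_{n(i)}$, of length $n(i)+1$.
\end{itemize}
In either case the length is at most $n(i)+1\leq n-(i-1)+1$, so the invariant is preserved. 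Taking $i=1$ gives an $H$-canonical form $m=k_1\dots k_p$ with $p=n(1)\leq n$.

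There is no real obstacle here, since Lemma~\ref{h+m-Hormal form} does all the work: it guarantees that each new first factor lies in $H$ (by (H2) or (H3)) and that the inequalities $k_j^*<k_{j+1}^+$ and the conditions $k_j\notin E$ for $j\geq 2$ are maintained. The only point to check is the length bookkeeping. A single step can lengthen the expression by one, but we never exceed the number of original factors.
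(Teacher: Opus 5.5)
Your proof is correct and follows essentially the same route as the paper, which states that the result follows directly from Lemma~\ref{h+m-Hormal form} by induction on $n$, processing the factors right to left. Your invariant $n(i)\leq n-i+1$ is exactly the length bookkeeping that this induction needs.
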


\begin{cor}\label{cor:short2}    
Let $M=\langle H\rangle_{(2)}$ be a left Ehresmann monoid with  pre-basis $H$. 
Suppose that  
$m\in M$  has $H$-canonical form  of length n. 
 If  $m=h_1\dots h_n$ where $h_i\in H$ for $1\leq i\leq n$,
 then 
\[m=(h_1(h_2\dots h_n)^+)(h_2(h_3\dots h_n)^+)\dots (h_{n-1}h_n^+)h_n\]
 is its $H$-canonical form. 
\end{cor}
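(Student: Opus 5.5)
We would prove Corollary~\ref{cor:short2} by running the reduction algorithm of Lemma~\ref{h+m-Hormal form} on the product $h_1\cdots h_n$ from right to left, as in Corollary~\ref{cor:short}, and using the fact that the $H$-canonical form of $m$ is unique and has length $n$. Put $m_i=h_i\cdots h_n$ for $1\le i\le n$. Each $m_i$ has some $H$-canonical form of length $p_i\le n-i+1$ by Corollary~\ref{cor:short}. By Lemma~\ref{h+m-Hormal form}, passing from $m_{i+1}$ to $m_i=h_im_{i+1}$ either keeps the length or increases it by exactly one. Since $p_n=1$ and $p_1=n$ (by uniqueness of $H$-canonical forms), we must have $p_i=n-i+1$ for every $i$. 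So at every step the second case of Lemma~\ref{h+m-Hormal form} occurs: the first factor $k$ of the canonical form of $m_{i+1}$ lies outside $E$, $h_i^*k^+<k^+$, and the canonical form of $m_i$ is $(h_ik^+)$ followed by the canonical form of $m_{i+1}$.

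We then argue by downward induction on $i$ that the $H$-canonical form of $m_i$ is
\[(h_i m_{i+1}^+)(h_{i+1}m_{i+2}^+)\cdots(h_{n-1}h_n^+)h_n.\]
For $i=n$ this is immediate. For the inductive step, the first factor of the canonical form of $m_{i+1}$ is $k=h_{i+1}m_{i+2}^+$ (or $h_n$ when $i+1=n$). We need $k^+=m_{i+1}^+$. Here $k\,m'=m_{i+1}$, where $m'$ is the remaining product, and the canonical-form condition gives $k^*<(m')^+$. Hence $m_{i+1}^+=(km')^+=(k(m')^+)^+=(kk^*)^+=k^+$, using $k^*\le (m')^+$ and the identity $(xy)^+=(xy^+)^+$. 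The new first factor is therefore $h_ik^+=h_im_{i+1}^+$, which is in $H$ by (H2).

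Taking $i=1$ gives the displayed form for $m$, and uniqueness of $H$-canonical forms identifies it as \emph{the} $H$-canonical form. The main obstacle is the length-counting step, which forces case two of the lemma at every stage. This needs the algorithm to never shrink the length by more than it can recover, i.e.\ that the lengths satisfy $p_i\le p_{i+1}+1$. That holds because each application of Lemma~\ref{h+m-Hormal form} adds at most one factor, so $p_i\le n-i+1$ with equality forced throughout.
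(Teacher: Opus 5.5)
Your proof is correct, but it takes a different route from the paper's.

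\textbf{The paper's route.} The paper never runs the reduction algorithm. It writes down the displayed product and checks the canonical-form requirements one at a time, each by contradiction. It shows $h_n\notin E$, then that the middle factors $h_i(h_{i+1}\cdots h_n)^+$ are not in $E$, then that the inequality $(h_i(h_{i+1}\cdots h_n)^+)^*\le (h_{i+1}(h_{i+2}\cdots h_n)^+)^+$ supplied by (H2) is strict. In each case a failure would let (H2) or (H3) merge two adjacent factors. Corollary~\ref{cor:short} together with uniqueness would then give an $H$-canonical form of length less than $n$.

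\textbf{Your route.} You treat Lemma~\ref{h+m-Hormal form} as a black box. Each left multiplication changes the length by $0$ or $1$, and $p_n=1$, $p_1=n$, so the length-increasing case is forced at every step. As a result, all the canonical-form conditions (the strict inequalities and the non-membership in $E$) are inherited from the lemma rather than re-verified. (H3) enters only implicitly, through the lemma's second case, which your count rules out. The price is the downward induction that identifies the factors.

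\textbf{One step to tighten.} The canonical-form condition gives $k^*<k_2^+$, where $k_2$ is the next factor. It does not literally give $k^*<(m')^+$, and in general $(m')^+\le k_2^+$ is the only inequality available for free. This is easy to repair in either of two ways. You can carry the statement ``the first factor has the same $+$ as the element'' along the induction. More simply, $k=h_{i+1}m_{i+2}^+$ gives $k^+=(h_{i+1}m_{i+2}^+)^+=(h_{i+1}m_{i+2})^+=m_{i+1}^+$ directly, by the identity $(xy)^+=(xy^+)^+$. With that fix, your counting argument is a clean and fully valid alternative to the paper's case-by-case verification.
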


\begin{proof} 

We  make frequent use of Corollary~\ref{cor:short} and the fact that $H$-canonical forms are unique, 
which  together tell us that we cannot write $m$ as a  product of {\em fewer} than $n$ elements of $H$.  
If $m=h_1\dots h_n$ it is clear that 
\[m=(h_1(h_2\dots h_n)^+)(h_2(h_3\dots h_n)^+)\dots (h_{n-1}h_n^+)h_n\]
The result is obvious if $n=1$. Suppose therefore that $n>1$. Notice first that $h_n\notin E$, 
otherwise $h_{n-1}h_n\in H$  by (H2) and 
as $m=h_1\dots h_{n-2}(h_{n-1}h_n)$,  the length of its $H$-canonical form would be less or equal to $n-1$ by Corollary~\ref{cor:short}, a contradiction. 
Further,  
 $h_i(h_{i+1}\dots h_n)^+\in H$ but $h_i(h_{i+1}\dots h_n)^+\notin E$ for  $2\leq i\leq n-1$, else we  could consider the element  $h_{i-1}h_i(h_{i+1}\dots h_n)^+$,  use  (H2) once more,
 and shorten the length of $H$-canonical form of $m$ again.

For $1\leq i\leq n-1$, using the second condition of (H2), we get 
\[(h_i(h_{i+1}\dots h_n)^+)^*=h_i^*(h_{i+1}\dots h_n)^+\leq (h_{i+1}\dots h_n)^+=(h_{i+1}(h_{i+2}\dots h_n)^+)^+,\] for
$1\leq i\leq n-1$, 
where the factor $(h_{i+2}\dots h_n)^+$ is  empty  for $i=n-1$. If $(h_i(h_{i+1}\dots h_n)^+)^*=(h_{i+1}(h_{i+2}\dots h_n)^+)^+$, then  by (H3) 
\[(h_i(h_{i+1}\dots h_n)^+)(h_{i+1}(h_{i+2}\dots h_n)^+)\in H,\] and a  reduction in the number of elements of $H$ in the expression of $m$ can be made, giving a contradiction again. 
So $(h_i(h_{i+1}\dots h_n)^+)^*<(h_{i+1}(h_{i+2}\dots h_n)^+)^+$.  The result follows.\end{proof}

\begin{lem}\label{some help}
Let $M=\langle H\rangle_{(2)}$ be a left Ehresmann monoid with pre-basis $H$. Let $m=h_1\dots h_n$ be in $H$-canonical form and let $e\in E$.  Suppose that $m=me$.  Then the last term of the $H$-canonical form of $me$ is $h_ne$, so that $h_ne=h_n$. 
\end{lem}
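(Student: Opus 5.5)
The plan is to write $me$ as the product $h_1\dots h_{n-1}(h_ne)$, check that this is already an expression in $H$-canonical form, and then invoke uniqueness of $H$-canonical forms (available because $H$ is a pre-basis). By (H2), $h_ne\in H$ and $(h_ne)^*=h_n^*e$. Hence the only conditions to verify for the sequence $h_1,\dots,h_{n-1},h_ne$ to be in $H$-canonical form are these:
\[h_{n-1}^*<(h_ne)^+ \quad\text{and}\quad h_ne\notin E \mbox{ when } n\geq 2.\]
All earlier inequalities $h_i^*<h_{i+1}^+$ for $i\leq n-2$, and the conditions $h_i\notin E$ for $2\leq i\leq n-1$, are inherited unchanged from the canonical form of $m$.

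The case $n=1$ is trivial: $h_1e$ is a single element of $H$, hence in $H$-canonical form. Uniqueness then gives $h_1e=h_1$, since $h_1$ is also a canonical form of $m=me$.

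For $n\geq 2$, I use $m=me$ to control $(h_ne)^+$. First, $h_ne\notin E$: otherwise, by (H2), $h_{n-1}(h_ne)\in H$, and then $m=me=h_1\dots h_{n-2}(h_{n-1}h_ne)$ is a product of $n-1$ elements of $H$. Corollary~\ref{cor:short} would then give an $H$-canonical form of length at most $n-1$, contradicting uniqueness.

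Next, I need $h_{n-1}^*<(h_ne)^+$. Since $(h_ne)^+\leq h_n^+$ (as $h_n^+h_ne=h_ne$ and Lemma~\ref{lem:folk} applies), we have $h_{n-1}^*(h_ne)^+\leq (h_ne)^+$. I then run the reduction algorithm of Lemma~\ref{h+m-Hormal form} on $h_{n-1}\cdot(h_ne)$, where $h_ne\notin E$.

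If $h_{n-1}^*\geq (h_ne)^+$, then (H3) puts $h_{n-1}h_ne\in H$, and again $me$ is a product of $n-1$ elements of $H$, which is a contradiction.

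Otherwise $h_{n-1}^*(h_ne)^+<(h_ne)^+$. Then Lemma~\ref{h+m-Hormal form} gives $(h_{n-1}(h_ne)^+)(h_ne)$ in canonical form. Continuing leftwards through the algorithm produces a canonical form of $me$ whose last factor is $h_ne$, of length at most $n$, and in fact exactly $n$ by uniqueness. This is essentially Corollary~\ref{cor:short2} applied to the length-$n$ product $h_1\cdots h_{n-1}(h_ne)$. That corollary directly yields the canonical form
\[(h_1(h_2\dots h_{n-1}h_ne)^+)\cdots(h_{n-1}(h_ne)^+)(h_ne),\]
whose last term is $h_ne$.

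So the cleanest route is to apply Corollary~\ref{cor:short2} with the factorisation $me=h_1\cdots h_{n-1}(h_ne)$. This is legitimate because $me=m$ has canonical form of length $n$, and $h_ne\in H$ by (H2). The corollary gives that the $H$-canonical form of $me$ ends in $h_ne$. Since $me=m$ also has canonical form $h_1\dots h_n$, uniqueness forces the last terms to coincide, so $h_ne=h_n$.

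The main subtlety is just confirming the hypotheses of Corollary~\ref{cor:short2}: a pre-basis, canonical length exactly $n$, and a factorisation into exactly $n$ elements of $H$. All of these hold immediately.
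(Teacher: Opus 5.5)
Your proof is correct and, in its final form, is exactly the paper's argument: you apply Corollary~\ref{cor:short2} to the length-$n$ factorisation $me=h_1\cdots h_{n-1}(h_ne)$, with $h_ne\in H$ by (H2), to see that the $H$-canonical form of $me$ ends in $h_ne$, and then compare with $m=h_1\cdots h_n$ using uniqueness. The preliminary direct checks that $h_ne\notin E$ and $h_{n-1}^*<(h_ne)^+$ are sound but unnecessary, since that corollary already packages them.
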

\begin{proof}
 By (H2), the element $h_ne$ is in $H$  and so the decomposition 
$h_1\dots h_{n-1}(h_ne)$ of  $m=me$  has  $n$ elements of $H$. Using Corollary~\ref{cor:short2}
and the fact that  $H$ is a pre-basis, we may conclude that the last term of the $H$-canonical form of $me$ is $h_ne$. 
 
\end{proof}

\begin{lem}\label{prop}\label{prop:extra} 
Let $M=\langle H\rangle_{(2)}$ be a left Ehresmann monoid with 
pre-basis $H$. Then the following conditions hold:

\begin{enumerate}
    \item[$(i)$] if  $h_1\dots h_n$ is  in $H$-canonical form, then $(h_1\dots h_n)^+=h_1^+$.
   
    \item[$(ii)$]  if $h \in H$ and $k \in H\setminus E$, then  $h^{\ast} \geq k^{+}$ if and only if  $hk\in H$, and then $(hk)^\ast = k^\ast$.
\end{enumerate}
\end{lem}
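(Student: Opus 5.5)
Both parts reduce to facts already available. Part $(i)$ is a direct computation using the left Ehresmann identity $(xy)^+=(xy^+)^+$. Part $(ii)$ is (H3) in one direction and uniqueness of $H$-canonical forms in the other.

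For $(i)$ I will argue by induction on $n$. The case $n=1$ is trivial. For $n\geq 2$, the product $h_2\dots h_n$ is again in $H$-canonical form, since the defining conditions of Definition~\ref{defn:hnormal} are inherited by suffixes. By induction, $(h_2\dots h_n)^+=h_2^+$. Using the identity $(xy)^+=(xy^+)^+$ from (\ref{eqn:le}), we get $(h_1h_2\dots h_n)^+=(h_1(h_2\dots h_n)^+)^+=(h_1h_2^+)^+$. Now $h_1=h_1h_1^*$ and $h_1^*<h_2^+$ in $E$, so
\[h_1h_2^+=h_1h_1^*h_2^+=h_1h_1^*=h_1.\]
Hence $(h_1\dots h_n)^+=h_1^+$. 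This part uses only that $H$ is a $*$-subset, not uniqueness of canonical forms.

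For $(ii)$, the forward implication, together with $(hk)^*=k^*$, is exactly (H3), so only the converse needs proof. Suppose $h\in H$, $k\in H\setminus E$ and $hk\in H$, but $h^*\not\geq k^+$. Then $h^*k^+<k^+$.

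The key step is to apply Lemma~\ref{h+m-Hormal form} with $m=k$, which is trivially in $H$-canonical form of length $1$. Since $k\notin E$ and $h^*k^+<k^+$, we are in the second case of that lemma. So $hk=(hk^+)k$ is an $H$-canonical form of length $2$; here $hk^+\in H$ by (H2), and $(hk^+)^*=h^*k^+<k^+$. On the other hand, $hk\in H$ is itself an $H$-canonical form of length $1$. These two expressions have different lengths, so they are distinct, which contradicts $M$ having $H$-canonical forms (as $H$ is a pre-basis). Therefore $h^*\geq k^+$, and (H3) then gives $(hk)^*=k^*$.

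I expect no serious obstacle. The only point needing care is checking that $(hk^+)k$ really meets all the conditions of Definition~\ref{defn:hnormal}, namely that its second factor $k$ is not in $E$. This is precisely the hypothesis $k\in H\setminus E$, and it explains why that hypothesis appears in the statement.
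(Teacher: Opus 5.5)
Your proposal is correct and follows essentially the same route as the paper. For $(i)$ you use the same induction with $(xy)^+=(xy^+)^+$ and $h_1^*<h_2^+$. For $(ii)$ you use (H3) for the forward direction, and for the converse you compare the two $H$-canonical forms $hk$ and $(hk^+)k$ (the paper writes these down directly from (H2) rather than citing Lemma~\ref{h+m-Hormal form}), contradicting uniqueness.
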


\begin{proof} $(i)$ Suppose that $m=h_1\dots h_n$ is in $H$-canonical form. To see that $m^+=h_1^+$ we use induction, noticing that if $h_1\dots h_n$ is in $H$-canonical form, then so is $h_i\dots h_n$ for any $1\leq i\leq n$. If $n=1$ it is clear that $m^+=h_1^+$. Suppose for induction that
$(h_2\dots h_n)^+=h_2^+$. Then 
\[m^+=(h_1\dots h_n)^+=(h_1(h_2\dots h_n)^+)^+
=(h_1h_2^+)^+=(h_1h_1^*h_2^+)^+=(h_1h_1^*)^+=h_1^+,\]
using the facts that $M$ satisfies the identity $(xy)^+=(xy^+)^+$ and as $m$ is in $H$-canonical form we have that $h_1^*<h_2^+$.

$(ii)$ Take $h\in H$ and $k\in H\setminus E$. The direct implication is given by condition (H3). Assume that
$hk\in H$. By (H2) we have  $hk^+\in H$ with $(hk^+)^*=h^*k^+\leq k^+$. If $h^*k^+ < k^+$ then 
 $(hk)=(hk^+)k$  where both expressions are  $H$-canonical forms, contradicting uniqueness of such. Therefore $h^*k^+ =k^+$, i.e. $k^+\leq h^*$.
    \end{proof}

Next we show that  for a left Ehresmann monoid $M=\langle H\rangle_{(2)}$, with pre-basis $H$   we can extend the $*$-operation defined on $H$ to the whole of $M$, obtaining a $*$-left Ehresmann monoid.

\begin{them}\label{prop:total} 
Let $M=\langle H\rangle_{(2)}$ be a left Ehresmann monoid with 
pre-basis $H$.  Then $M$ is a $*$-left  Ehresmann monoid, where  for  $m=h_1\dots h_n$  in $H$-canonical form, 
    $$(h_1\dots h_n)^\ast = h_n^\ast.$$
  \end{them}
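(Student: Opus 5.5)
Since $M$ has $H$-canonical forms, the rule $m^*=h_n^*$ for $m=h_1\dots h_n$ in $H$-canonical form is a well-defined unary operation on $M$, with values in $E$. It agrees with the given $*$ on $H$: each $h\in H$ is its own $H$-canonical form of length one. By Lemma~\ref{lem:wra} and Definition~\ref{defn:starE}, it then suffices to prove two things. First, for every $m\in M$ the element $m^*$ is the least right identity of $m$ from $E$. Second, the identities $(x^*)^+=x^*$ and $(x^+)^*=x^+$ hold.

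The second part is immediate. Every element $e\in E$ lies in $H$ by (H1), so $e$ is its own $H$-canonical form and $e^*$ is the minimal right identity of $e$ from $E$. That element is $e$ itself, so $e^*=e$. Hence $(a^+)^*=a^+$. Also $(a^*)^+=a^*$, since $a^*\in E$ and $E$ consists of projections.

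For the first part, let $m=h_1\dots h_n$ be in $H$-canonical form. Clearly $mm^*=h_1\dots h_{n-1}h_nh_n^*=m$, so $m^*$ is a right identity. Now suppose $me=m$ for some $e\in E$. By Lemma~\ref{some help}, the last term of the $H$-canonical form of $me$ is $h_ne$. Since $me=m$ and $H$-canonical forms are unique, this forces $h_ne=h_n$. As $h_n^*$ is the minimal right identity of $h_n$ from $E$, we get $h_n^*\le e$, that is, $m^*\le e$. So $m^*$ is the least right identity of $m$ from $E$. Lemma~\ref{lem:wra} then gives the identities $xx^*=x$, $(x^*)^*=x^*$, $x^*y^*=y^*x^*$ and $(xy^*)^*y^*=(xy^*)^*$.

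The one point requiring care is the use of Lemma~\ref{some help}. It relies on Corollary~\ref{cor:short2}, which says that any decomposition of $m$ into $n$ elements of $H$, with $n$ the canonical length, yields the $H$-canonical form whose last term is the last factor. The decomposition $h_1\dots h_{n-1}(h_ne)$ of $me$ has exactly $n$ factors, all in $H$ by (H2), so the corollary applies. Beyond that, the argument is only a matter of checking that the definition of $*$-left Ehresmann from Definition~\ref{defn:starE} is fully covered by the two parts above.
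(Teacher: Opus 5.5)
Your proof is correct and follows essentially the same route as the paper. The key step in both is Lemma~\ref{some help}: from $me=m$ one gets $h_ne=h_n$, and hence $h_n^*\le e$ by minimality of $h_n^*$ for elements of $H$. The paper uses this only to verify $(xy^*)^*y^*=(xy^*)^*$ directly, taking $e=v^*$ and $u v^*=h_1\dots h_n$, and calls the remaining identities immediate. You instead prove the least-right-identity property for every $m$ and then invoke Lemma~\ref{lem:wra}, whose hypothesis $\{a^*:a\in M\}=E$ you secure through $e^*=e$.
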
  
\begin{proof} With $*$ as in the statement,   we
need to show that the identities of Definition ~\ref {defn:starE} hold, namely

\[ xx^*=x, \, (x^*)^*=x^*,\, x^*y^*=y^*x^*\mbox{ and }(xy^*)^*y^*=(xy^*)^*  \] and 
$$(x^*)^+=x^*\mbox{ and }(x^+)^*=x^+.$$

Except for the identity $(xy^*)^*y^*=(xy^*)^*$, all the others are clear consequences of  the definition 
of a $*$-subset. To prove the identity $(xy^*)^*y^*=(xy^*)^*$,  let $u,v\in M$ and let $e=v^*$. Write
$ue=h_1\dots h_n$  in $H$-canonical form. As $(ue)e=ue$ Lemma ~\ref{some help} gives that $h_ne=h_n$ and from the definition of $*$-subset, we therefore have $h_n^*e=h_n^*$. Then 
$$(uv^*)^* = 
(ue)^*=h_n^*=h_n^*e=(uv^*)^*v^*,$$
 as required.
\end{proof}

\begin{rem}\label{rem:ample}(The left ample identity) We will see in the remainder of the paper 
canonical examples of
$*$-left Ehresmann monoids with $H$-canonical forms for a suitable atomic generating set $H$. These  include FLE$(X)$, where the $H$ taken corresponds directly to that for $\FLA(X)$. The reason that $\FLA(X)$ does {\em not} have $H$-canonical forms stems from the fact that it satisfies the {\em ample identity} 
$xy^+=(xy)^+x$. If a left  Ehresmann monoid $M=\langle E\cup T\rangle_{(2)}$ satisfies this identity, then every element of $M$ can be written in the form   $es$, where $e\in E$ and $s\in T$. Further, if also $f\in E$ and $t\in T$, then $(es)(ft)=(e(sf)^+)st$.  Inverse, left ample and left restriction monoids all satisfy the ample  identity and this leads to descriptions involving semidirect products. In general, left Ehresmann monoids do not satisfy the ample identity. Nevertheless, for $*$-left Ehresmann monoids, we may use $H$-canonical forms to determine their structure using semigroups $\mathcal{P}_{\ell}(T,X)$ in the place of semidirect products. 
\end{rem}

\section{Monoids $\mathcal{P}_\ell(T,X)$ and $\QQ(T, X, Y)$} \label{sec:Q}

The first aim of this section is  to recall the construction of the left Ehresmann monoids $\mathcal{P}_{\ell}(T,X)$. Since any such monoid has uniqueness of $T$-normal forms, it follows that $\mathcal{P}_{\ell}(T,X)$ is in fact $*$-left Ehresmann. We show that $\mathcal{P}_{\ell}(T,X)$ has  a  proper  basis.  We then find certain monoid subsemigroups of 
$\mathcal{P}_{\ell}(T,X)$, denoted $\QQ(T, X, Y)$,  that also have a  proper  basis.  Notice that monoids $\QQ(T, X, Y)$  are determined by data strongly reminiscent of that used to construct a McAlister $P$-semigroup \cite{M1,M2}. 

\subsection{The monoids $\mathcal{P}_{\ell}(T,X)$ }\label{sub:PTX}
 First, we outline the construction of the  {\em semigroup free product}  $U\ast  V$ of semigroups $U$ and $V$. We view   $U\ast V$  as a union of two  disjoint monoids (parts) and then set 
$$U\ast V:= \{ a_1 \dots a_m: m \geq 1 \mbox{ and } \forall i,  a_i \in U\cup V  \mbox{ and }  a_i, a_{i+1} \mbox{not in the same part} \},$$
where the product of elements $u=a_1 \dots a_m$ and $v=b_1 \dots b_k$ is given  by \, $a_1 \dots a_mb_1 \dots b_k$ if $a_m$ and $b_1$ are in different parts, or by $a_1 \dots a_{m-1}cb_2 \dots b_k$, where $c=a_mb_1$ if $a_m$ and $b_k$ are in the same part. For further details, see \cite[Chapter 8]{H}.

 Let $T$ be a monoid,   
 $X$  a semilattice with identity
and  $(X,\cdot)$  an order-preserving action of $T$. Our convention will be that $1_T$ (respectively $1_X$) denotes the identity of $T$ (respectively of $X$).   
Let $T * X$ be the  {\em semigroup} free product of $T$ and $X$. Since $T$ acts on the left of $X$ via order-preserving maps, there is a monoid morphism
\[\phi: T \rightarrow {\mathcal{O}}^{\ast}_X, \ (t\phi)(y) = t \cdot y,\]
where ${\mathcal{O}}^{\ast}_X$ is the monoid of order-preserving maps of $X$ and a $ ^{\ast} $ denotes the dual of a monoid, so that in ${\mathcal{O}}^{\ast}_X$, maps are composed from right to left. The monoid semilattice $X$ acts on the left of itself by order-preserving maps via multiplication, so that there is a monoid morphism, also denoted $\phi$, given by
\[\phi: X \rightarrow {\mathcal{O}}^{\ast}_X,\ (z\phi)(y) = zy.\]
By the universal property of free products, we have a semigroup morphism
\[\phi: T * X \rightarrow {\mathcal{O}}^{\ast}_X\]
defined by
\[(s_1\dots s_n)\phi = s_1\phi \dots s_n\phi,\]
where each $s_i \in T \cup X$. We thus have  an order-preserving  action of the semigroup $T * X$ on $X$, which we may also without ambiguity denote by $\cdot$, so that
\[s_1\dots s_n \cdot y = s_1 \cdot (s_2 \cdot (\dots (s_n \cdot y))).\]
 Now for $\omega \in T * X$, we define
\[\omega^{+} = \omega \cdot 1_X,\]
so that $e^{+} = e$, for all $e \in X$.

\begin{them}\cite[Theorem 2.2, Corollary 3.4]{GG}\label{P1} 
Let $T$ be a monoid and $(X,\cdot)$  an order-preserving action of $T$ on the left of a semilattice $X$ with identity. Let the unary operation  ${+}$ be defined on $T * X$ as above. Consider the semigroup congruence $\sim$ on $T * X$ generated by
\[K = \{(\alpha^{+}\alpha, \alpha): \alpha \in T * X\} \cup \{(1_T, 1_X)\}.\]
Let $ \QP_{\ell}(T, X) = (T * X)/\sim$. Then $\QP_{\ell}(T, X)$ is a left Ehresmann monoid with $[\alpha]^{+} = [\alpha^{+}]$, identity $[1_X]$ and semilattice of projections
\[\overline{X} = \{[y]: y \in X\}.\]
Let $\overline{T} = \{[t]: t \in T\}$. Then $X$  is isomorphic to $\overline{X}$ and $T$ is isomorphic to $\overline{T}$ under restrictions of the natural morphisms. Further, 
\[\mathcal{P}_{\ell}(T,X)=\langle \overline{X}\cup\overline{T}\rangle_{(2)}\] and  $\QP_{\ell}(T, X)$ has uniqueness of $\overline{T}$-normal forms. Finally, we have
$\mathcal{P}_{\ell}(T,X)/\sigma=\overline{T}/\sigma$, that is, every $\sigma$-class of $\QP_{\ell}(T, X)$ contains a unique element of $\overline{T}$.

Conversely, if $M=\langle E\cup T\rangle_{(2)}$ is a left Ehresmann monoid  such that $M$ has uniqueness of $T$-normal forms, then $M\cong\mathcal{P}_{\ell}(T,E)$.
\end{them}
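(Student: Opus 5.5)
This theorem is quoted from \cite[Theorem 2.2, Corollary 3.4]{GG}, so in the paper it is imported. If I had to prove it, I would split the work into four parts: the left Ehresmann structure on the quotient, the embeddings of $X$ and $T$, uniqueness of $\overline{T}$-normal forms together with the description of $\sigma$, and the converse.

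\textbf{The left Ehresmann structure.} First I check that $+$ is compatible with $\sim$. Since $\sim$ is generated by $K$, it suffices that $\phi$ (the action of $T*X$ on $X$) identifies the generating pairs.

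For $(\alpha^+\alpha,\alpha)$ and $y\in X$,
\[\alpha^+\alpha\cdot y=(\alpha\cdot 1_X)\wedge(\alpha\cdot y)=\alpha\cdot y,\]
because $\alpha$ acts order-preservingly and $y\le 1_X$. Also $1_T$ and $1_X$ both act as the identity. Hence $\alpha\sim\beta$ implies $\alpha\cdot y=\beta\cdot y$ for all $y$, and in particular $\alpha^+=\beta^+$. So $[\alpha]^+=[\alpha^+]$ is well defined, and the action of $\mathcal{P}_\ell(T,X)$ on $X$ factors through the quotient.

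The identities of (\ref{eqn:le}) then follow by direct computation:
\begin{itemize}
\item $x^+x=x$ holds because of the relation in $K$;
\item the commutation and idempotence identities for projections come from $X$ being a semilattice;
\item $(xy)^+=(xy^+)^+$ holds because $xy^+\cdot 1_X=x\cdot(y\cdot 1_X)$.
\end{itemize}
The restrictions $X\to\overline{X}$ and $T\to\overline{T}$ are injective. For $X$, use $[y]^+=[y]$ together with $y=y\cdot 1_X$, which is an invariant of the class. For $T$, I would postpone injectivity to the normal-form step.

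\textbf{Uniqueness of $\overline{T}$-normal forms.} This is the main obstacle. Existence is given by Lemma~\ref{bgg}. For uniqueness, the plan is to construct an explicit model $N$: the set of formal sequences $(t_0,e_1,t_1,\dots,e_n,t_n)$ satisfying the conditions of Definition~\ref{defn:Tnormal}, with respect to the action $t\cdot e$ and with $(t e\cdots)^+$ computed through the action.

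On $N$ I define a left action of the generators, $T\cup X$, by reduction. Left multiplication by $t$ merges $t$ into $t_0$, with care for the case $t_0$ becomes $1$. Left multiplication by $y\in X$ follows the reduction rules in the proof of Lemma~\ref{bgg}: compare $y$ with $(t_0e_1\cdots)^+$, and either absorb $y$, or insert $y\wedge(\cdot)^+$ as a new $e$, or merge it with an adjacent projection.

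The key points to verify are:
\begin{itemize}
\item these maps generate a semigroup action of $T*X$ on $N$;
\item every pair in $K$ acts identically on $N$.
\end{itemize}
The second check is the delicate one, and I would carry it out by induction on the length of $\alpha$. Once it holds, $[\alpha]\mapsto\alpha\cdot(1_T)$ is well defined on the quotient and sends each normal form to itself. That gives uniqueness, and also the injectivity of $T\to\overline{T}$.

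\textbf{The $\sigma$ statement.} This follows from Proposition~\ref{M/Sigma-T}. The morphism $c_{\overline{T}}$ has kernel $\sigma$, and it is the identity on $\overline{T}$, so every $\sigma$-class contains exactly one element of $\overline{T}$.

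\textbf{The converse.} Let $M=\langle E\cup T\rangle_{(2)}$ have uniqueness of $T$-normal forms. Lemma~\ref{lem:naturalaction} gives an order-preserving action of $T$ on $E$ by $t\cdot e=(te)^+$. The identity maps on $T$ and $E$ induce a morphism $T*E\to M$ that respects $+$, so it kills $K$ and factors through a surjection $\mathcal{P}_\ell(T,E)\to M$.

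This surjection sends $\overline{T}$-normal forms to $T$-normal forms. Normal forms in $\mathcal{P}_\ell(T,E)$ are unique by the previous step, and they map to distinct normal forms in $M$, which are distinct elements by hypothesis. Hence the map is injective, and so it is an isomorphism.
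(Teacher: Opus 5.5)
The paper gives no proof of this theorem. It is imported from \cite{GG}, with the normal-form material from \cite{BGG}, so there is no internal argument to compare yours with. Judged on its own, your outline is correct and is a reasonable self-contained route. Killing $K$ under the action of $T*X$ on $X$ gives the well-definedness of $+$, the left Ehresmann identities and the embedding of $X$. The model $N$ of formal normal forms, a van der Waerden--type device, is the right tool for uniqueness. The converse is argued correctly; there you actually need only existence of $\overline{T}$-normal forms in $\mathcal{P}_{\ell}(T,E)$ together with uniqueness in $M$.

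Two points would tighten the argument.

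\emph{The step you call delicate is easy once you isolate the right invariant.} The invariant is
\[(\alpha\cdot w)^+=\alpha\cdot w^+\qquad(\alpha\in T*X,\ w\in N).\]
On the left, $^+$ of a sequence is computed through the action; on the right, $\alpha$ acts on $X$. For a generator $t$ this is just $(tt_0,e_1,\dots)^+=t\cdot w^+$. For $y\in X$, each of your three rules (absorb, insert, merge) produces a sequence whose $^+$ is $y\wedge w^+$. The general case follows by induction on the length of $\alpha$. Then $(\alpha\cdot w)^+=\alpha\cdot w^+\le \alpha\cdot 1_X=\alpha^+$, so $\alpha^+$ is absorbed and $\alpha^+\alpha$ and $\alpha$ act identically on $N$.

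What does need a short case check is associativity, $y\cdot(z\cdot w)=yz\cdot w$ for $y,z\in X$. You need this before invoking the universal property of $T*X$, and it does hold. Also, no special care is needed when $tt_0=1_T$, since $t_0$ may equal $1_T$ in a normal form.

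\emph{You need not postpone injectivity of $T\to\overline{T}$, nor rely on Proposition~\ref{M/Sigma-T}.} The morphism $T*X\to T$ with $t\mapsto t$ and $x\mapsto 1_T$ kills $K$, so it induces $c:\mathcal{P}_{\ell}(T,X)\to T$ with $c([t])=t$.
\begin{itemize}
\item Since $c([t])=t$, the map $t\mapsto[t]$ is injective.
\item Since $c$ sends every projection to $1_T$, we have $\sigma\subseteq\ker c$.
\item Each letter $x$ satisfies $[x]\,\sigma\,[1_X]=[1_T]$, so $[\alpha]\,\sigma\,[c(\alpha)]$ for every $\alpha$.
\end{itemize}
Hence $\ker c=\sigma$ and $\overline{T}$ is a transversal of the $\sigma$-classes. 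This makes your argument independent of another imported result.
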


Note that from Theorem~\ref{rightAdequate}, any left Ehresmann monoid of the form $\mathcal{P}_{\ell}(T,X)$ is in fact $*$-left Ehresmann. Observe also that from \cite[Theorem 4.5]{GG}, the free left  Ehresmann monoid FLE$(X)$ on any set $X$  is of the form $\mathcal{P}_{\ell}(X^*,E)$. 

\begin{cor}\label{cor:free} The free left  Ehresmann monoid FLE$(X)$ on any set $X$  is $*$-left  Ehresmann.
\end{cor}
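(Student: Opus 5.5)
The corollary is a chaining of two earlier results, so I will just assemble them. First, by \cite[Theorem 4.5]{GG} (recalled just before the statement), the free left Ehresmann monoid FLE$(X)$ is isomorphic, as a left Ehresmann monoid, to $\mathcal{P}_{\ell}(X^*,E)$ for a suitable semilattice with identity $E$ carrying an order-preserving action of the free monoid $X^*$. Next, Theorem~\ref{P1} tells us that $\mathcal{P}_{\ell}(X^*,E)=\langle \overline{E}\cup\overline{X^*}\rangle_{(2)}$ has uniqueness of $\overline{X^*}$-normal forms, where $\overline{X^*}$ is a submonoid and $\overline{E}$ is its semilattice of projections. This places us exactly in the hypotheses of Theorem~\ref{rightAdequate}.

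Theorem~\ref{rightAdequate} then yields that $\mathcal{P}_{\ell}(X^*,E)$ is $*$-left Ehresmann. The unary operation $*$ is given explicitly: $a^*=e_n$ if the $\overline{X^*}$-normal form of $a$ ends $\dots e_n 1$ with $n\geq 1$, and $a^*=1$ otherwise.

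The only point needing a word is transport along the isomorphism. Define $*$ on FLE$(X)$ by pulling back through the left Ehresmann isomorphism $\theta:\mathrm{FLE}(X)\to\mathcal{P}_{\ell}(X^*,E)$, that is, $a^*=((a\theta)^*)\theta^{-1}$. Since $\theta$ preserves the binary operation, the identity and $+$, every identity in Definition~\ref{defn:starE} transfers. Alternatively, by Lemma~\ref{lem:wra}, being $*$-left Ehresmann amounts to each element having a least right identity from the semilattice of projections, and this property is invariant under isomorphisms of left Ehresmann monoids, since these map $E$ onto $E$ order-isomorphically.

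No step is a genuine obstacle. The one thing to check is that the isomorphism in \cite{GG} is indeed one of unary monoids (preserving $+$), so that projections correspond; this is how that theorem is stated.
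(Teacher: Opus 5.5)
Your proposal is correct and matches the paper's argument: FLE$(X)\cong\mathcal{P}_{\ell}(X^*,E)$ by \cite[Theorem 4.5]{GG}, Theorem~\ref{P1} gives uniqueness of normal forms, and Theorem~\ref{rightAdequate} then makes it $*$-left Ehresmann. Your remark about pulling $*$ back along the isomorphism of left Ehresmann monoids makes explicit a step the paper leaves implicit.
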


\subsection{Proper bases for monoids $\mathcal{P}_{\ell}(T,X)$}\label{sub:PTXagain}  

Let  $\mathcal{P}_{\ell}(T,X)$ be a  monoid as defined as in Subsection~\ref{sub:PTX}. We recall from Theorem~\ref{P1} and the remark following that $\mathcal{P}_{\ell}(T,X)=\langle \overline{X}\cup\overline{T}\rangle_{(2)}$ is a $*$-left Ehresmann monoid with 
$\overline{T}$-normal forms,
 where $\overline{X}\cong X$ and $\overline{T}\cong T$. 
Conversely, if $M=\langle E\cup T\rangle_{(2)}$ is a left Ehresmann monoid with  $T$-normal forms, then $M\cong\mathcal{P}_{\ell}(T,E)$  and  $M$ is $*$-left Ehresmann. For ease of notation, in this section we denote an element of $\mathcal{P}_{\ell}(T,X)$ as $\alpha$  (where $\alpha \in T\ast X)$  rather than $[\alpha]$, so that  $[x]\in \overline{X}$ and $[t]\in \overline{T}$ are denoted by $x$ and $t$, respectively. Consequently, we  identify $\overline{X}$ with $X$ and $\overline{T}$ with $T$.

\begin{defn}\label{defn:H} For a monoid $\mathcal{P}_{\ell}(T,X)$ we set
\[H^{\mathcal{P}}=: \{ te:t\in T, e\in X\}.\]\end{defn}

 Following similar abuses of notation, whenever we write expressions such as $te\in H^{\mathcal{P}}$, it will be understood that $t\in T$ and $e\in X$. 
Since $\mathcal{P}_{\ell}(T,X)=\langle X\cup T\rangle_{(2)}$  
and, with our notational convention, both $X$ and $T$ contain the identity of  $\mathcal{P}_{\ell}(T,X)$, we get
$\mathcal{P}_{\ell}(T,X)=\langle H^{\mathcal{P}}\rangle_{(2)}$. 
We  show that $H^{\mathcal{P}}$ is a proper   basis for $\mathcal{P}_{\ell}(T,X)$. We begin with  the key properties of  elements of $H^{\mathcal{P}}$. 

\begin{lem}\label{lem:HH}  Let  $h=te\in H^{\mathcal{P}}$. Then
\begin{enumerate} 
\item[$(i)$]
$h^+=t\cdot e\mbox{ and }h^*=e$;
\item[$(ii)$] $h\,\sigma\, 1_X$ if and only if $t=1_T$;
\item[$(iii)$] 
$h\in X$ if and only if $t=1_T$;
\item[$(iv)$]   $te=sf\in H^{\mathcal{P}}$ if and only if $t=s$ and $e=f$.\end{enumerate} 
\end{lem}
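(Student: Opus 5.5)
The whole argument runs through the uniqueness of $T$-normal forms in $\mathcal{P}_{\ell}(T,X)$ (Theorem~\ref{P1}), together with the description of $*$ in Theorem~\ref{rightAdequate} and of $\sigma$ in Proposition~\ref{M/Sigma-T}. The plan is to write each $h=te$ explicitly in $T$-normal form and then read off each claim. There are three cases. If $e=1_X$, then $h=t$ is in $T$-normal form with $n=0$. If $e\neq 1_X$ and $t\cdot e=1_X$, then $h=(te)^+te=t\cdot e\,\cdot te$; more directly, I would show $te=t$ via $t=t^+t$ and the identity $(te)^+ = t\cdot e$ together with the defining relations. I expect this needs care, so instead I would rely on the general normal form algorithm. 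Finally, if $e\neq 1_X$, then $1_T e 1_T$ satisfies $e<1=1_T^+$, so $te=t\,e\,1$ is in $T$-normal form whenever $t\in T$. Indeed Remark~\ref{rem:TE} notes that $t_0e_1t_1$ with $e_1<t_1^+$ is in normal form, and here $t_1=1$ and $e<1$. So in all cases $te$ has normal form $t$ (if $e=1_X$) or $t\,e\,1$ (if $e\neq1_X$).

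For $(i)$: $h^+=(te)^+=t\cdot e$ follows from Lemma~\ref{lem:naturalaction} and the construction in Theorem~\ref{P1}, where $[\alpha]^+=[\alpha\cdot 1_X]=[t\cdot(e\cdot 1_X)]=[t\cdot e]$. For $h^*$, Theorem~\ref{rightAdequate} applied to the normal form $t e 1$ gives $h^*=e$ (last factor $t_n=1$, $n=1$). When $e=1_X$ the normal form is $t$ and $h^*=1=e$.

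For $(ii)$: by Proposition~\ref{M/Sigma-T}, $c_T(te)=t$ and $\ker c_T=\sigma$, so $h\,\sigma\,1_X$ iff $t=c_T(1)=1_T$.

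For $(iii)$: if $t=1_T$ then $h=e\in X$. Conversely, if $h\in X$ then $h\,\sigma\,1_X$, so $t=1_T$ by $(ii)$.

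For $(iv)$: if $te=sf$, applying $c_T$ gives $t=s$, and applying $*$ using $(i)$ gives $e=f$. The converse is trivial.

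The main obstacle is the case $e\neq 1_X$: checking that $t\,e\,1$ is genuinely a $T$-normal form (this needs $t\neq 1$ only for interior factors, and here $t$ is $t_0$, so there is no restriction). Once that is settled, the rest is bookkeeping with uniqueness.
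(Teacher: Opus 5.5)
Your argument is essentially the paper's proof: you write $te$ in $T$-normal form as $t$ (if $e=1_X$) or $t\,e\,1_T$ (if $e\neq 1_X$), read off $h^*$ from Theorem~\ref{rightAdequate}, get $h^+=t\cdot e$ from the definition of $^+$ in Theorem~\ref{P1}, and use that $\sigma$ identifies distinct elements of $T$ only trivially (you phrase it via $\ker c_T=\sigma$; the paper says each $\sigma$-class contains a unique element of $T$) for $(ii)$--$(iv)$. The one thing to fix is your abandoned aside suggesting $te=t$ when $e\neq 1_X$ and $t\cdot e=1_X$: this is false, since $t$ and $t\,e\,1_T$ are distinct $T$-normal forms (indeed $(te)^*=e\neq 1_X=t^*$), so delete it and let the case $e\neq 1_X$ cover everything, as the rest of that paragraph already does.
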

\begin{proof}  The fact that $h^+=t\cdot e$ follows from Theorem~\ref{P1} and the definition of $^+$ used there. If $e=1_X$, then $h=t$  is in $T$-normal form so that by Theorem~\ref{rightAdequate} we have $h^*=1_X=e$. If $e\neq 1_X$ then $h=te1_T$ is in $T$-normal form and again Theorem~\ref{rightAdequate} gives that $h^*=e$. Thus $(i)$ holds.

As  $h\,\sigma\, t$ we get $h\,\sigma\, 1_X$ if and only if $t\,\sigma\, 1_X$, but this is equivalent to $t=1_T$.  Thus $(ii)$ holds and $(iii)$ is a consequence.

Considering $(iv)$, if $te=sf$, then $t\,\sigma\, s$ and so $t=s$, and  $e=(te)^*=(sf)^*=f$. The converse is clear. 
\end{proof}

\begin{cor}\label{cor:proper} The subset $H^{\mathcal{P}}$ of $\mathcal{P}_{\ell}(T,X)$ is proper.
\end{cor}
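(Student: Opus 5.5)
The plan is to verify Definition~\ref{defn:hproper} directly. First, $H^{\mathcal{P}}$ is a $*$-subset of $\mathcal{P}_{\ell}(T,X)$. Indeed, $\mathcal{P}_{\ell}(T,X)$ has uniqueness of $T$-normal forms by Theorem~\ref{P1}, so it is $*$-left Ehresmann by Theorem~\ref{rightAdequate}. Hence every element, and in particular every $h=te\in H^{\mathcal{P}}$, has a least right identity from $X$, namely $h^*$. By Lemma~\ref{lem:HH}$(i)$ this least right identity is $h^*=e$.

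It remains to check the defining equivalence: for $h,k\in H^{\mathcal{P}}$, $h^*=k^*$ and $h\,\sigma\,k$ together hold if and only if $h=k$. The direction from $h=k$ to the two conditions is trivial.

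For the converse, write $h=te$ and $k=sf$ with $t,s\in T$ and $e,f\in X$. Then $h^*=k^*$ gives $e=f$ by Lemma~\ref{lem:HH}$(i)$. Next, $h\,\sigma\,t$ and $k\,\sigma\,s$, since $e\,\sigma\,1_X$ and $f\,\sigma\,1_X$ and $\sigma$ is a congruence. Thus $h\,\sigma\,k$ yields $t\,\sigma\,s$.

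By Theorem~\ref{P1}, equivalently Proposition~\ref{M/Sigma-T}, each $\sigma$-class contains a unique element of $T$, so $t=s$. Therefore $h=te=sf=k$, as required. Alternatively, once $t=s$ and $e=f$ are known, one can cite Lemma~\ref{lem:HH}$(iv)$.

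There is no real obstacle here. The only point needing care is confirming that $^*$ on $H^{\mathcal{P}}$, in the sense of Definition~\ref{*-subset}, coincides with the $^*$ computed in Lemma~\ref{lem:HH}, and Theorem~\ref{rightAdequate} supplies exactly this.
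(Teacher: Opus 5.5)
Your proof is correct and takes essentially the same route as the paper: $h^*=k^*$ gives $e=f$ by Lemma~\ref{lem:HH}, and $h\,\sigma\,k$ gives $t\,\sigma\,s$, hence $t=s$ by Theorem~\ref{P1}. Your explicit check, via Theorem~\ref{rightAdequate}, that $H^{\mathcal{P}}$ is a $*$-subset is a point the paper leaves implicit.
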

\begin{proof} Let $te,sf\in H^{\mathcal{P}}$.
If $te\,\sigma\, sf$ and $(te)^*=(sf)^*$, then $t\,\sigma\, s$ so that $t=s$ by Theorem~\ref{P1}, also $e=f$ by Lemma~\ref{lem:HH} so that $te=sf$.   \end{proof}

We now present a technical lemma that we can subsequently use to leverage  the fact that
$\mathcal{P}_{\ell}(T,X)$ has $T$-normal forms to  deduce that it has $H^{\mathcal{P}}$-canoncial forms.

\begin{lem}\label{T-normal vs T-Y notmal}
Let  $\alpha = h_1\dots h_n\in \mathcal{P}_{\ell}(T,X)$ where $h_i=t_{i-1}e_i$, for $ 1\leq i\leq n$. Then $\alpha$ is in $H^{\mathcal{P}}$-canonical form if and only if either $e_n\neq 1_X$ and $t_0e_1t_1e_2\dots e_n1_T$   is in $T$-normal form, or $e_n = 1_X$  and $t_0e_1t_1e_2\dots e_{n-1}t_{n-1}$ is in $T$-normal form, where if $n \geq 2$ then $t_{n-1} \neq 1_T$.
\end{lem}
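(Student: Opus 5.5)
The plan is to translate each condition in Definition~\ref{defn:hnormal} into a condition on the $T$-normal form, using Lemma~\ref{lem:HH}. For $h_i=t_{i-1}e_i$ we have $h_i^*=e_i$ and $h_i^+=t_{i-1}\cdot e_i=(t_{i-1}e_i)^+$. Moreover $h_i\in X$ if and only if $t_{i-1}=1_T$. Hence $\alpha$ is in $H^{\mathcal{P}}$-canonical form exactly when the following hold:
\[e_1<(t_1e_2)^+,\ \dots,\ e_{n-1}<(t_{n-1}e_n)^+,\quad\mbox{and}\quad t_1,\dots,t_{n-1}\neq 1_T.\]
We then compare this with Remark~\ref{rem:TTN}. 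That remark says that $s_0f_1s_1\dots f_ms_m$ is in $T$-normal form if and only if
\[f_i\neq 1_X\ (1\le i\le m),\quad s_1,\dots,s_{m-1}\neq 1_T,\quad f_i<(s_if_{i+1})^+\ (i<m),\quad f_m<s_m^+ .\]

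\emph{Case $e_n\neq 1_X$.} Consider $t_0e_1t_1\dots e_n1_T$. Its final condition reads $e_n<1_T^+=1_X$, which holds. The middle conditions coincide with the canonical-form inequalities and with $t_i\neq 1_T$ for $1\le i\le n-1$. The only extra requirement is $e_i\neq 1_X$ for $i<n$, and this already follows from $e_i<(t_ie_{i+1})^+\le 1_X$. So both directions are immediate.

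\emph{Case $e_n=1_X$.} Here $h_n=t_{n-1}$ and $\alpha=t_0e_1\dots e_{n-1}t_{n-1}$. The canonical-form inequality for $i=n-1$ is $e_{n-1}<(t_{n-1}1_X)^+=t_{n-1}^+$. This is exactly the last condition for the $T$-normal form with final term $s_{n-1}=t_{n-1}$. The remaining inequalities match as before, and the constraints $t_1,\dots,t_{n-2}\neq 1_T$ match as well. The canonical-form condition $h_n\notin X$ (when $n\ge 2$) becomes $t_{n-1}\neq 1_T$, which is the extra clause in the statement. Again $e_i\neq 1_X$ is automatic from strictness.

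The main obstacle is bookkeeping at the edge cases. For $n=1$ both sides are trivially true, since $h_1$ is always canonical and $t_0e_1$, resp.\ $t_0$, is in $T$-normal form. We must also check that $(t_ie_{i+1})^+$ in $\mathcal{P}_{\ell}(T,X)$ equals $t_i\cdot e_{i+1}$, which is Theorem~\ref{P1}. Finally, the identification of $\overline{T},\overline{X}$ with $T,X$ must respect $1_T=1_X$ in the monoid, so that $1_T^+=1_X$.
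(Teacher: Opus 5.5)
Your proposal is correct and takes essentially the same approach as the paper. You use Lemma~\ref{lem:HH} to translate $h_i^*=e_i$, $h_{i+1}^+=(t_ie_{i+1})^+$ and $h_i\notin X\Leftrightarrow t_{i-1}\neq 1_T$, then match these against the conditions of Remark~\ref{rem:TTN}, getting $e_i\neq 1_X$ for $i<n$ from strictness and using $1_T^+=1_X$ for the final term; your explicit check of the $n=1$ case is a small detail the paper leaves implicit.
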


\begin{proof}
Suppose that $\alpha= t_0e_1t_1 \dots t_{n-1}e_n$ is in 
$H^{\mathcal{P}}$-canonical form.  Then
\[h_1^*<h_2^+,\, h_2^*<h_3^+\,\dots,\, h_{n-1}^*<h_n^+\]
and $h_i\notin X$ for $2\leq i\leq n$. 
Since $h_i\notin X$ for $1<i\leq n$, we have $t_{i-1}\neq 1_T$ for $1< i\leq n$, so that $t_1,\dots, t_{n-1}\in T\setminus \{ 1_T\}$. On the other hand, if $e_i=1_X$ for $1\leq i<n$, then $h_i^*=1_X$, contradicting $h_i^*<h_{i+1}^+$.
Thus $e_1,\dots, e_{n-1}\in X\setminus\{ 1_X\}$. For $1\leq i<n$ we get $e_i=h_i^*<h_{i+1}^+=(t_ie_{i+1})^+$.

If $e_n\neq 1_X$, then $e_n<1_T^+=1_X$, hence using 
Remark~\ref{rem:TTN}
we obtain \, $t_0e_1t_1\dots e_n1_T$  in $T$-normal form. Also,  if $e_n=1_X$,  we get $t_0e_1\dots e_{n-1}t_{n-1}$ in $T$-normal form for similar reasons, and if $n\geq 2$ then $t_{n-1}\neq 1_T$.

Conversely, suppose that $e_n \neq 1_X$ and $t_0e_1t_1e_2\dots t_{n-1}e_n1_T$ is in $T$-normal form. Then
\[e_1, \dots, e_{n} \in X\setminus \{1_X\},\,   t_1, \dots, t_{n-1} \in T\setminus \{1_T\},\]
and for $1\leq i\leq n$, we have $e_i<(t_ie_{i+1})^+$
and $e_n<1_T^+=1_X$. 
 By Lemma~\ref{lem:HH}, we obtain $h_i\notin X$ for $2\leq i\leq n$. Further, $h_i^*=e_i<(t_ie_{i+1})^+=h_{i+1}^+$ for $1\leq i<n$. Thus
 $h_1\dots h_n$ is in $H^{\mathcal{P}}$-canonical form.

 Finally, suppose that $e_n=1_X$ and $t_0e_1t_1\dots e_{n-1}t_{n-1}$ is in $T$-normal form, where if $n\geq 2$ then $t_{n-1}\neq 1_T$. Hence $e_1,\,\dots, e_{n-1}\in X\setminus\{ 1_X\}$ and $t_1,\dots, t_{n-2}\in T\setminus \{ 1_T\}$. Lemma~\ref{lem:HH} gives  $h_i\notin X$ for $2\leq i\leq n$. Further, for
 $1\leq i<n$ we obtain  $h_i^*=e_i<(t_ie_{i+1})^+
 =h_{i+1}^+$ so that again, $h_1\dots h_n$ is in $H^{\mathcal{P}}$-canonical form. 
\end{proof}

\begin{cor}\label{cor:uniquenesshsim}  The monoid 
$\mathcal{P}_{\ell}(T,X)$ has $H^{\mathcal{P}}$-canonical forms.
\end{cor}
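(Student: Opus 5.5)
To prove Corollary~\ref{cor:uniquenesshsim} we must show two things: every element of $\mathcal{P}_{\ell}(T,X)$ has an $H^{\mathcal{P}}$-canonical form, and that form is unique. The plan is to transfer both properties from uniqueness of $T$-normal forms (Theorem~\ref{P1}), using Lemma~\ref{T-normal vs T-Y notmal} as a dictionary between $H^{\mathcal{P}}$-canonical forms and $T$-normal forms.

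\emph{Existence.} Let $\alpha\in\mathcal{P}_{\ell}(T,X)$ have $T$-normal form $t_0e_1t_1\dots e_nt_n$. There are two cases.
\begin{itemize}
\item If $n\geq 1$ and $t_n=1_T$, set $h_i=t_{i-1}e_i$ for $1\leq i\leq n$. Then $\alpha=h_1\dots h_n$ and $e_n\neq 1_X$.
\item Otherwise, set $h_i=t_{i-1}e_i$ for $1\leq i\leq n$ and $h_{n+1}=t_n1_X$. Then $\alpha=h_1\dots h_{n+1}$, the last idempotent is $1_X$, the $T$-part $t_0e_1\dots e_nt_n$ is in $T$-normal form, and when $n\geq 1$ we have $t_n\neq 1_T$.
\end{itemize}
In both cases the converse direction of Lemma~\ref{T-normal vs T-Y notmal} shows that the product is in $H^{\mathcal{P}}$-canonical form. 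The case $n=0$ gives the single factor $t_0 1_X$, which is trivially canonical.

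\emph{Uniqueness.} Suppose $\alpha=h_1\dots h_n=k_1\dots k_m$ are two $H^{\mathcal{P}}$-canonical forms, with $h_i=t_{i-1}e_i$ and $k_j=s_{j-1}f_j$. By the forward direction of Lemma~\ref{T-normal vs T-Y notmal}, each yields a $T$-normal form of $\alpha$: either $t_0e_1\dots e_n1_T$ when $e_n\neq 1_X$, or $t_0e_1\dots e_{n-1}t_{n-1}$ when $e_n=1_X$, and similarly for the $k_j$. Uniqueness of $T$-normal forms then forces these two sequences to coincide.

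The main obstacle is to exclude a mixed case, where $e_n\neq 1_X$ but $f_m=1_X$, and then to recover $m=n$ and $h_i=k_i$. Use Theorem~\ref{rightAdequate}. In the first case $\alpha^*=e_n\neq 1_X$, since the $T$-normal form ends in $1_T$ with at least one idempotent. In the second case, if $m\geq 2$ then $s_{m-1}\neq 1_T$, so $\alpha^*=1_X$. If $m=1$ then $\alpha=s_0\in T$ and again $\alpha^*=1_X$. So the two types cannot mix, and this can equally be read off directly from the shape of the $T$-normal form.

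Within each type, equality of $T$-normal forms gives equal lengths and componentwise equal entries $t_i=s_i$ and $e_i=f_i$. Hence $m=n$ and $h_i=k_i$ for all $i$; one may also invoke Lemma~\ref{lem:HH}$(iv)$ here. A small point to check is the length-one case $n=1$, $e_1=1_X$, where the $T$-normal form is just $t_0$; this is covered by the same matching.
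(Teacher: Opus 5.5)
Your proof is correct and follows essentially the same route as the paper. You translate both $H^{\mathcal{P}}$-canonical forms into $T$-normal forms via Lemma~\ref{T-normal vs T-Y notmal}, split according to whether the last idempotent is $1_X$, and exclude the mixed case by uniqueness of $T$-normal forms; your alternative via $\alpha^*$ from Theorem~\ref{rightAdequate} is an equivalent repackaging of that step. Your existence argument is slightly more careful than the paper's, which only cites the discussion opening Subsection~\ref{sub:h-simple} and does not separately treat the case $t_n=1_T$, where the trailing factor $t_n1_X$ must be dropped.
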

\begin{proof} 
The existence of $H^{\mathcal{P}}$-canonical forms is guaranteed by Theorem~\ref{P1} and the discussion at the beginning of 
Subsection~\ref{sub:h-simple}. 
Suppose that 
\[
\alpha=h_1\dots h_n=k_1\dots k_m\]
are two expressions for $\alpha$ in $H^{\mathcal{P}}$-canonical form.  Let $h_i=t_{i-1}e_i$ and  $k_j=s_{j-1}f_j$ for
$1\leq i\leq n$ and $1\leq j\leq m$. We now call upon Lemma~\ref{T-normal vs T-Y notmal}. 

If $e_n\neq 1_X$ and $f_m\neq 1_X$, then
\[\alpha=t_0e_1t_1e_2\dots e_n1_T=s_0f_1s_1f_2\dots f_m1_T\]
are two expressions for $\alpha$ in $T$-normal form. By uniqueness of $T$ normal forms we have $n=m$, $t_i=s_i$ for $0\leq i\leq n-1$ and $e_i=f_i$ for $1\leq i\leq n$, so that $h_i=k_i$ for $1\leq i\leq n$. A similar argument holds if $e_n=f_m=1_X$. 

If $e_n\neq 1_X$ and $f_m=1_X$ then 
\[\alpha=t_0e_1t_1e_2\dots e_n1_T=s_0f_1s_1\dots f_{m-1}s_{m-1} \]
are two expressions for $\alpha$ in $T$-normal form. If $m=1$, then $\alpha=s_0$ which is in $T$-normal form, contradicting the first expression for $\alpha$ as such. Thus $m\geq 2$ so that $s_{m-1}\neq 1_T$. But this contradicts uniqueness of $T$-normal forms. Hence this case, and the dual, cannot happen.
\end{proof}

\begin{lem}\label{prop:atomicH^P} 
The subset $H^{\mathcal{P}}$ is atomic.\end{lem}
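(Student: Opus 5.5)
We verify (H1)--(H5) directly, with Lemma~\ref{lem:HH} as the main tool. First, $H^{\mathcal{P}}$ is a $*$-subset. By Theorem~\ref{rightAdequate}, $\mathcal{P}_{\ell}(T,X)$ is $*$-left Ehresmann, so every element has a least right identity from $X$. By Lemma~\ref{lem:HH}$(i)$, this least right identity is $(te)^*=e$ for $h=te$.

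For (H1), note that $e=1_Te\in H^{\mathcal{P}}$ for every $e\in X$. For (H2), take $h=te$ and $f\in X$. Then $hf=t(ef)\in H^{\mathcal{P}}$, and Lemma~\ref{lem:HH}$(i)$ gives $(hf)^*=ef=h^*f$.

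For (H3), let $h=te$ and $k=sf\notin X$, so $s\neq 1_T$ by Lemma~\ref{lem:HH}$(iii)$. Suppose $e\geq k^+=s\cdot f=(sf)^+$. Then $hk=te(sf)^+sf=t(sf)^+sf=tsf$, since $e(sf)^+=(sf)^+$ and $(sf)^+sf=sf$. Hence $hk=(ts)f\in H^{\mathcal{P}}$ and $(hk)^*=f=k^*$.

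For (H4), Theorem~\ref{P1} says every $\sigma$-class of $\mathcal{P}_{\ell}(T,X)$ contains some $t\in T$. Since $t=t1_X\in H^{\mathcal{P}}$, each $m$ is $\sigma$-related to an element of $H^{\mathcal{P}}$.

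For (H5), take $h=te$, $k$ and $w$ in $H^{\mathcal{P}}$. Put $u=t=t1_X$. Then $u\,\sigma\,h$, because $e\,\sigma\,1_X$. Also $u^*=1_X\geq k^+$. So (H5) holds trivially and the hypotheses $hk\,\sigma\,w$ and $k^*=w^*$ are not needed, exactly as in Example~\ref{ex:frela}.

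The only step needing care is (H3): there we must check that $e\geq (sf)^+$ lets the projection $e$ be absorbed, which is the identity $x^+x=x$ of Definition~\ref{defn:lefte}, and that the resulting element is written as $(ts)f$, so that Lemma~\ref{lem:HH} applies to compute its $*$. The other conditions follow immediately from Lemma~\ref{lem:HH} and Theorem~\ref{P1}.
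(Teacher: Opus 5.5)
Your proof is correct and follows the paper's route. Both check (H1)--(H5) directly from Lemma~\ref{lem:HH}: (H3) comes from $tesf=tsf$ when $e\geq (sf)^+$, (H4) from each $\sigma$-class containing an element of $T$ (Theorem~\ref{P1}), and (H5) trivially with $u=t$ and $t^*=1_X$. You also verify explicitly, via Theorem~\ref{rightAdequate}, that $H^{\mathcal{P}}$ is a $*$-subset, which the paper leaves implicit.
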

\begin{proof} Since $X\cup T\subseteq H^{\mathcal{P}}$ it is clear that (H1) holds.  If $h=te$ and $f\in X$, then
$hf=tef\in H^{\mathcal{P}}$ and $(hf)^*=ef=h^*f$, thus (H2) holds.  Suppose now that $h=te,k=sf\in H^{\mathcal{P}}$. If $e=h^*\geq k^+$, then $hk=tesf=tsf\in H^{\mathcal{P}}$ and, by Lemma~\ref{lem:HH}, we get 
$(hk)^*=f=k^*$ whence (H3) holds. Condition (H4) follows from the fact that regarding $T$ as a submonoid of  $\mathcal{P}_{\ell}(T,X)$ we have  $T/\sigma=\mathcal{P}_{\ell}(T,X)/\sigma$. Finally, (H5) is in this case immediate, since for any $h=te\in H^{\mathcal{P}}$ we have $h\,\sigma\, t$ and $t^*=1_X$. Thus $H^{\mathcal{P}}$ is atomic. 
\end{proof}

 We draw together the results of this subsection.

\begin{them}\label{them:newform} 
Let $T$ be a monoid  and let $(X,\cdot)$ be an order-preserving action of $T$ on the left of a semilattice $X$. The $*$-left Ehresmann monoid $\mathcal{P}_{\ell}(T,X)$ has proper basis  $H^{\mathcal{P}}=\{ te:t\in T, e\in X\}$. 

For  any $\alpha\in \mathcal{P}_{\ell}(T,X)$ where $\alpha= h_1\dots h_n$ in $H^{\mathcal{P}}$-canonical form, 
\[\alpha^+=h_1^+\mbox{ and }\alpha^*=h_n^*.\]
\end{them}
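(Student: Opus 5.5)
The plan is to assemble the results of this subsection, since each ingredient of a proper basis has been checked separately. First, $\mathcal{P}_{\ell}(T,X)=\langle H^{\mathcal{P}}\rangle_{(2)}$: we have $X\cup T\subseteq H^{\mathcal{P}}$ and $\mathcal{P}_{\ell}(T,X)=\langle X\cup T\rangle_{(2)}$ by Theorem~\ref{P1}. Next, $H^{\mathcal{P}}$ is a $*$-subset. By Theorem~\ref{P1}, $\mathcal{P}_{\ell}(T,X)$ has uniqueness of $T$-normal forms, so Theorem~\ref{rightAdequate} makes it $*$-left Ehresmann. Hence every element, and in particular every $te\in H^{\mathcal{P}}$, has a least right identity from $X$, namely $(te)^*=e$ by Lemma~\ref{lem:HH}$(i)$.

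With this in place, Lemma~\ref{prop:atomicH^P} shows that $H^{\mathcal{P}}$ is atomic. Corollary~\ref{cor:uniquenesshsim} shows that $\mathcal{P}_{\ell}(T,X)$ has $H^{\mathcal{P}}$-canonical forms. By Definition~\ref{defn:basis}, $H^{\mathcal{P}}$ is therefore a basis, and Corollary~\ref{cor:proper} shows it is proper. This proves the first assertion.

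For the formulas, let $\alpha=h_1\dots h_n$ be in $H^{\mathcal{P}}$-canonical form. Since $H^{\mathcal{P}}$ is in particular a pre-basis, Lemma~\ref{prop:extra}$(i)$ gives $\alpha^+=h_1^+$ directly.

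For $\alpha^*$ there is one point to check, and it is the main obstacle. Theorem~\ref{prop:total} defines a $*$-operation by $\alpha^*=h_n^*$, whereas the $*$ we are using on $\mathcal{P}_{\ell}(T,X)$ comes from Theorem~\ref{rightAdequate}. Both are characterised as the least right identity of $\alpha$ from $X$ (by Lemma~\ref{lem:wra} and the proof of Theorem~\ref{prop:total}), so they agree, and $\alpha^*=h_n^*$.

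I would also verify this directly with Lemma~\ref{T-normal vs T-Y notmal}. Write $h_i=t_{i-1}e_i$.
\begin{itemize}
\item If $e_n\neq 1_X$, then $\alpha=t_0e_1\dots e_n1_T$ is in $T$-normal form with last $T$-term $1_T$, so Theorem~\ref{rightAdequate} gives $\alpha^*=e_n=h_n^*$.
\item If $e_n=1_X$, then $\alpha=t_0e_1\dots e_{n-1}t_{n-1}$ is in $T$-normal form. Here $t_{n-1}\neq 1_T$ when $n\ge 2$, and $\alpha=t_0$ when $n=1$; in both cases Theorem~\ref{rightAdequate} gives $\alpha^*=1_X=h_n^*$.
\end{itemize}
The only subtle case is $n=1$ with $t_0=1_T$. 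There $\alpha=1$, and Theorem~\ref{rightAdequate} gives $\alpha^*=1_X=e_1=h_1^*$, so the formula still holds.
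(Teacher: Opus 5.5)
Your proposal is correct and follows the paper's proof, which likewise gets the first assertion from Corollary~\ref{cor:proper}, Corollary~\ref{cor:uniquenesshsim} and Lemma~\ref{prop:atomicH^P}, and cites Lemma~\ref{prop:extra}$(i)$ for $\alpha^+=h_1^+$. The paper does not spell out $\alpha^*=h_n^*$; it comes from Theorem~\ref{prop:total}, whose $*$ agrees with that of Theorem~\ref{rightAdequate} because both give the least right identity from $X$. Your explicit agreement argument, and your direct check via Lemma~\ref{T-normal vs T-Y notmal}, supply that step.
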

\begin{proof}The first statement follows from Corollary~\ref {cor:proper}, Corollary~\ref{cor:uniquenesshsim} and Proposition~\ref{prop:atomicH^P}. The final statement 
is $(i)$ in Lemma~\ref{prop:extra}.
\end{proof}

\subsection{Monoids $\QQ(T, X, Y)$ } \label{subsec:Q}

 We now fix a  monoid $\mathcal{P}_{\ell}(T,X)$, and 
examine  some particular monoid subsemigroups, denoted by  $\QQ(T, X, Y)$. These monoids  transpire to also possess a proper  basis, which will be a subset of $H^{\mathcal{P}}$.

\begin{defn} Consider a monoid $\mathcal{P}_{\ell}(T,X)$.   Let  $Y$ be a subsemilattice of $X$ with identity $1_Y$.  Suppose that
the following conditions hold for all $t\in T$ and $e,f\in Y$:

(A) if  $e \leq f$, then $t \cdot f \in Y$ implies $t \cdot e \in Y$;

(B) there exists $g \in Y$ such that $t \cdot g \in Y$.

Let 
\[H^{\mathcal{Q}}=\{ te:t\in T, e\in Y\mbox{ and }t\cdot e\in Y\}\]
and \[\QQ(T, X, Y)=\langle H^{\mathcal{Q}}\rangle_{(2)}. \]
\end{defn}

To simplify the notation, we may write simply $\QQ$ instead of $\QQ(T, X, Y)$.

\begin{rem}\label{rem:restriction} In the above context, 
if  $Y$ is {\em any} subsemilattice of $X$, 
then $(Y,\cdot)$ is a partial action of $T$ on the {\em set} $Y$, where in this case $\exists t\cdot y$ for $y\in Y$ if and only if $t\cdot y\in Y$. Since this is the {\em restriction} to $Y$ of the action $(X,\cdot)$ it follows from Theorem ~\ref{thm:strong} that this 
 partial 
action is strong. Condition (A) above tells us that $(Y,\cdot)$ is an order-preserving partial action of $T$, since the domains of the 
partial
action are order-ideals of $Y$.  Condition (B) says that the partial action is full.

Conversely, let $Y$ be a semilattice with identity and let $(Y, \cdot)$ be an order-preserving full partial action of a monoid $T$. By Theorem~\ref{them:slglobal} there is a globalisation $(\kappa,\X,\bullet)$ of $(Y, \cdot)$ where $\X$ is a semilattice with identity and $(\X,\bullet)$ is an order-preserving action of $T$.  We may then construct the  monoid $\mathcal{P}_\ell(T,\X,\Y)$. Since $(\kappa,\X,\bullet)$ is a globalisation of $(Y,\cdot)$ it follows that Conditions (A) and (B) hold for the restriction of the action $(\X,\bullet)$ to $\Y=Y\kappa$. We may then construct the subsemigroup $\QQ(T,\X,\Y)$ of $\mathcal{P}_\ell(T,\X,\Y)$ where $\Y$ is $Y\kappa$. 
\end{rem}

Notice that $H^{\mathcal{Q}}\subseteq H^{\mathcal{P}}$. Further, $Y\subseteq \QQ$ and if $h=te\in H^{\mathcal{Q}}$, then 
$h^*=e\in Y$ and $h^+=t\cdot e\in Y$. Moreover, it follows from Lemma~\ref{lem:HH} that  for $h\in H^{\mathcal{Q}}$ we have  $h\in X$ if and only if $t=1_T$ if and only if $h\in Y$.

\begin{lem}\label{lem:lucky} Let $\alpha\in \QQ$. Then 
the $H^{\mathcal{P}}$-canonical form of $\alpha$ is a product of elements of $H^{\mathcal{Q}}$. Further,
$H^{\mathcal{P}}\cap \QQ=H^{\mathcal{Q}}$ and $X\cap \QQ=Y$. 
\end{lem}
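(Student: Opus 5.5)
The plan is to run the reduction algorithm of Lemma~\ref{h+m-Hormal form} inside $\mathcal{P}_{\ell}(T,X)$, using the basis $H^{\mathcal{P}}$, and to check that it never leaves $H^{\mathcal{Q}}$. Since $H^{\mathcal{P}}$ is a basis (Theorem~\ref{them:newform}), the resulting $H^{\mathcal{P}}$-canonical form is \emph{the} $H^{\mathcal{P}}$-canonical form of $\alpha$.

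Let $\alpha=k_1\cdots k_r$ with $k_i\in H^{\mathcal{Q}}$. I would argue by induction on $r$, working from right to left as in the algorithm following Lemma~\ref{h+m-Hormal form}. For $r=1$, the single element $k_1$ is already in $H^{\mathcal{P}}$-canonical form. For the inductive step, let $h=te\in H^{\mathcal{Q}}$, so that $e\in Y$ and $t\cdot e\in Y$. Let $m=h_1\cdots h_n$ be in $H^{\mathcal{P}}$-canonical form with every $h_i\in H^{\mathcal{Q}}$, and write $h_1=sf$ with $f\in Y$ and $s\cdot f\in Y$. Lemma~\ref{h+m-Hormal form} says $hm$ has canonical form either $(hh_1)h_2\cdots h_n$ or $(hh_1^+)h_1\cdots h_n$. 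It then suffices to show that the new first factor lies in $H^{\mathcal{Q}}$.

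I would treat the three cases of Lemma~\ref{h+m-Hormal form} separately, in each case using the identities $h^+=t\cdot e$ and $h^*=e$ from Lemma~\ref{lem:HH}.

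\emph{Case $h_1\in X$.} Here $h_1=f\in Y$ and $hh_1=t(ef)$. We have $ef\in Y$, and $ef\le e$ with $t\cdot e\in Y$, so condition (A) gives $t\cdot(ef)\in Y$.

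\emph{Case $h_1\notin X$ and $e\ge h_1^+=s\cdot f$.} The paper's calculation in Lemma~\ref{prop:atomicH^P} gives $hh_1=tsf$. Here $f\in Y$ and $ts\cdot f=t\cdot(s\cdot f)$. Since $s\cdot f\in Y$, $s\cdot f\le e$ and $t\cdot e\in Y$, condition (A) gives $t\cdot(s\cdot f)\in Y$.

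\emph{Case $h_1\notin X$ and $e(s\cdot f)<s\cdot f$.} Here $hh_1^+=t\bigl(e(s\cdot f)\bigr)$. We have $e(s\cdot f)\in Y$ because $s\cdot f\in Y$, and $e(s\cdot f)\le e$, so condition (A) again puts $t\cdot\bigl(e(s\cdot f)\bigr)$ in $Y$.

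The point needing most care is the second case, where one must remember that $h_1^+=s\cdot f$ lies in $Y$ precisely because $h_1\in H^{\mathcal{Q}}$. This is exactly why the inductive hypothesis must carry membership in $H^{\mathcal{Q}}$ for all the factors, and not merely membership in $\mathcal{Q}_\ell$.

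For the remaining claims, clearly $H^{\mathcal{Q}}\subseteq H^{\mathcal{P}}\cap\mathcal{Q}_\ell$. Conversely, suppose $te\in H^{\mathcal{P}}\cap\mathcal{Q}_\ell$. As a single element of $H^{\mathcal{P}}$ it is its own $H^{\mathcal{P}}$-canonical form. By uniqueness of that form and the first part, it equals an element of $H^{\mathcal{Q}}$, and Lemma~\ref{lem:HH}$(iv)$ then gives $te\in H^{\mathcal{Q}}$. Finally, $Y\subseteq H^{\mathcal{Q}}$, since each $e\in Y$ can be written $e=1_Te$ with $1_T\cdot e=e\in Y$. Conversely, if $e\in X\cap\mathcal{Q}_\ell$, then $e=1_Te\in H^{\mathcal{P}}\cap\mathcal{Q}_\ell=H^{\mathcal{Q}}$, which forces $e\in Y$.
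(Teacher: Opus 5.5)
Your proposal is correct and follows essentially the same route as the paper: induction on the number of $H^{\mathcal{Q}}$-factors via the right-to-left reduction of Lemma~\ref{h+m-Hormal form}, with condition (A) in each case keeping the new first factor in $H^{\mathcal{Q}}$, followed by uniqueness of $H^{\mathcal{P}}$-canonical forms and Lemma~\ref{lem:HH}$(iv)$ for $H^{\mathcal{P}}\cap\mathcal{Q}_\ell=H^{\mathcal{Q}}$ and $X\cap\mathcal{Q}_\ell=Y$. The differences are cosmetic. The paper first proves $X\cap H^{\mathcal{Q}}=Y$ to handle the case $h_1\in X$, whereas you get $h_1=f$ directly (implicitly via Lemma~\ref{lem:HH}$(iii)$). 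The paper also separates the strictly-below and incomparable subcases, which you treat together.
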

\begin{proof}

We start by showing that $X\cap H^{\mathcal{Q}}=Y$. Clearly $Y\subseteq X\cap H^{\mathcal{Q}}$. 
If $e\in  X\cap H^{\mathcal{Q}}$, then $e=1_Te=tf$ for some $f\in Y$. By 
Lemma~\ref{lem:HH} $(iv)$ we have that $e=f$, so that 
  $e\in Y$.

Let $\alpha=h_1\dots h_n$ where $h_i\in H^{\mathcal{Q}}$. We show by induction that $\alpha$ has $H^{\mathcal{P}}$-canonical form $\alpha=k_1\dots k_m$
where $k_i\in H^{\mathcal{Q}}$ for $1\leq i\leq m$.

If $n=1$ the result is clear, since $H^{\mathcal{Q}}\subseteq H^{\mathcal{P}}$. 

Suppose now that the result is true for all elements of $\QQ$ that are products of $n-1$ elements of $H^{\mathcal{Q}}$. Then
$h_2\dots h_n=k_2\dots k_m$ say, where $k_2\dots k_m$
is in $H^{\mathcal{P}}$-canonical form and $k_i\in H^{\mathcal{Q}}$ for $2\leq i\leq m$. 

We have  $\alpha=h_1(k_2\dots k_m)$.
 Observe that  $k_2\in X$ if and only if $k_2\in Y$, since $X\cap H^{\mathcal{Q}}=Y$ and $k_2\in H^{\mathcal{Q}}$.
 We now discuss the various cases given by  Lemma~\ref{h+m-Hormal form}. Let $h_1=te$ for $t\in T$ and $e\in Y$, so that also $t\cdot e\in Y$ as $h_1\in H^{\mathcal{Q}}$.

If $k_2\notin Y$ and $h_1^*<k_2^+$ then $h_1k_2\dots k_m$  is in $H^{\mathcal{P}}$-canonical form.

If $k_2\in Y$ or $k_2\notin Y$ and $e=h_1^*\geq k_2^+$, then
$\alpha=(h_1k_2)k_3\dots k_m$ is in $H^{\mathcal{P}}$-canonical form.  If $k_2=f\in Y$,
then as $ef\in Y$, $ef\leq e$ and $t\cdot e\in Y$, we obtain $t\cdot ef\in Y$
by (A), so that $h_1k_2=tef\in H^{\mathcal{Q}}$. On the other hand, if $k_2\notin Y$, say $k_2=sf$ with $s\cdot f\in Y$, as   $e=h_1^*\geq k_2^+=s\cdot f$, we have $h_1k_2=tek_2=tk_2=tsf$. Now
$ts\cdot f=t\cdot (s\cdot f)$ with  $s\cdot f\in Y$, $t\cdot e\in Y$ and  $s\cdot f\leq e$, so that again $t\cdot (s\cdot f)\in Y$, hence $h_1k_2\in H^{\mathcal{Q}}$. 

Finally, if $k_2\notin Y$ and $h_1^*,k_2^+$ are incomparable, then $\alpha=(h_1k_2^+)k_2\dots k_m$  is in $H^{\mathcal{P}}$-canonical form, and  $h_1k_2^+=tek_2^+$.  Again
$ek^+_2\in Y$ and $ek^+_2\leq e$ and as $t\cdot e\in Y$ we have $t\cdot ek_2^+\in Y$, so that $tek_2^+\in H^{\mathcal{Q}}$. This completes the proof of the first statement.

It is clear that  
$H^{\mathcal{Q}}\subseteq H^{\mathcal{P}}\cap \QQ$. For the converse, consider $h\in H^{\mathcal{P}}\cap \QQ$. Then $h$ has an $H^{\mathcal{P}}$-canonical form which is a product of elements of $H^{\mathcal{Q}}$. Since $h$ is already in $H^{\mathcal{P}}$-canonical form it follows that $h\in H^{\mathcal{Q}}$. Thus $H^{\mathcal{Q}}\supseteq H^{\mathcal{P}}\cap \QQ$ and hence we have equality. 

Clearly $Y\subseteq  X\cap \QQ$.
If $e\in X\cap \QQ$, then $e\in H^{\mathcal{P}}\cap \QQ$ so that $e\in X\cap H^{\mathcal{Q}}=Y$, therefore  $Y= X\cap \QQ$.
\end{proof}

\begin{lem}\label{lem:biunarysubsemigroup} The semigroup $\QQ$ is a biunary subsemigroup of $\mathcal{P}_{\ell}(T,X)$ and is a monoid with identity $1_Y$. Consequently, $\QQ$ is a 
$*$-left Ehresmann monoid with semilattice of   projections $Y$.
\end{lem}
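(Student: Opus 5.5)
We must show three things: $\QQ$ is closed under $+$ and $*$ (as computed in $\mathcal{P}_{\ell}(T,X)$), it has identity $1_Y$, and hence it is $*$-left Ehresmann with projections $Y$. The plan is to use Lemma~\ref{lem:lucky} throughout, which says every $\alpha\in\QQ$ has its $H^{\mathcal{P}}$-canonical form made of elements of $H^{\mathcal{Q}}$. It is a subsemigroup by definition.

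\textbf{Closure under the unary operations.} Let $\alpha\in\QQ$ with $H^{\mathcal{P}}$-canonical form $\alpha=h_1\dots h_n$, where each $h_i\in H^{\mathcal{Q}}$. By Theorem~\ref{them:newform}, $\alpha^+=h_1^+$ and $\alpha^*=h_n^*$. For $h=te\in H^{\mathcal{Q}}$ we have $h^+=t\cdot e\in Y$ and $h^*=e\in Y$, as noted after the definition of $H^{\mathcal{Q}}$. Hence $\alpha^+,\alpha^*\in Y\subseteq\QQ$. Here $Y\subseteq\QQ$ because $e=1_Te\in H^{\mathcal{Q}}$ for $e\in Y$, since $1_T\cdot e=e\in Y$. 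Thus $\QQ$ is a biunary subsemigroup.

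\textbf{Identity.} For $\alpha\in\QQ$, $\alpha^+\in Y$ and $1_Y\geq\alpha^+$ (as $1_Y$ is the identity of $Y$). So $1_Y\alpha=1_Y\alpha^+\alpha=\alpha^+\alpha=\alpha$. Dually, $\alpha 1_Y=\alpha\alpha^*1_Y=\alpha\alpha^*=\alpha$, using $\alpha\alpha^*=\alpha$ from the $*$-left Ehresmann structure of $\mathcal{P}_{\ell}(T,X)$ (Theorem~\ref{rightAdequate}). So $1_Y$ is an identity of $\QQ$. Note that $1_Y$ need not equal $1_X$, which is why $\QQ$ is only a monoid subsemigroup.

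\textbf{Conclusion.} $*$-left Ehresmann semigroups form a variety of type $(2,1,1)$, as remarked after Definition~\ref{defn:starE}, so the biunary subsemigroup $\QQ$ satisfies all the defining identities. Since it has an identity, it is a $*$-left Ehresmann monoid. Its projections are $\{\alpha^+:\alpha\in\QQ\}$. This set is contained in $Y$ by the closure step and contains $Y$ since $e^+=e$ for $e\in Y$. Hence it equals $Y$, consistent with $X\cap\QQ=Y$ from Lemma~\ref{lem:lucky}.

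The only delicate point is the first step, which depends on knowing that the canonical-form factors lie in $H^{\mathcal{Q}}$. That is supplied by Lemma~\ref{lem:lucky}, so I expect no real obstacle.
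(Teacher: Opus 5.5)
Your proof is correct and follows essentially the same route as the paper. Both arguments obtain closure under $+$ and $*$ from Lemma~\ref{lem:lucky} and Theorem~\ref{them:newform}, and then inherit the $*$-left Ehresmann identities from the variety; the one difference is that the paper checks $1_Y$ is an identity directly on generators $te\in H^{\mathcal{Q}}$ (using $e1_Y=e$ and $(te)^+=t\cdot e\in Y$) before proving closure, whereas you deduce it for all $\alpha$ from $\alpha^+,\alpha^*\in Y$, and both work.
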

\begin{proof} We have $1_Y\in H^{\mathcal{Q}}$. Let 
$h=te\in H^{\mathcal{Q}}$. Clearly $h1_Y=h$.
On the other hand, $h^+=t\cdot e\in Y$ so that
$1_Y h=h$. Thus $1_Y$ is a two-sided identity for the elements of the generating set $H^{\mathcal{Q}}$  of  $\QQ$, and hence it is the identity for $\QQ$. 

Let $\alpha=h_1\dots h_n\in \QQ$ be in $H^{\mathcal{P}}$-canonical form, where $h_i\in H^{\mathcal{Q}}$ for $1\leq i\leq n$. By Theorem~\ref{them:newform} we have 
$\alpha^*=h_m^*$ and
$\alpha^+=h_1^+$. It follows from Lemma~\ref{lem:HH} that
$\alpha^*, \alpha^+\in Y$. 

The final statements hold since  $*$-left Ehresmann semigroups form a variety of biunary semigroups and $Y\subseteq H^{\mathcal{Q}}$.
\end{proof}

 To argue    that $H^{\mathcal{Q}}$ is a proper basis for $\QQ$, we must first determine $\sigma$ in $\QQ$.  For the purposes of the next lemma we denote the congruence 
    $\sigma$ in $\mathcal{P}$ (respectively $\QQ$) by
    $\sigma^{\mathcal{P}}$ (respectively $\sigma^{\mathcal{Q}}$).

\begin{lem}\label{aSigmac(a)e}
If $\alpha = h_1h_2\dots h_n\in \QQ$ is in $H^{\mathcal{P}}$-canonical form, where
$h_i=s_ie_i\in H^{\mathcal{Q}} $ for $1\leq i \leq n$, then 
$\alpha\, \sigma^{\mathcal{Q}}s_1\dots s_ne$ for some $e \in Y$, where also $s_1s_2\dots s_n \cdot e \in Y$.
\end{lem}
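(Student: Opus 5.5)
The plan is to argue by induction on $n$, peeling off $h_1$ and keeping every intermediate element inside $\QQ$. Two facts drive this. First, by Lemma~\ref{lem:biunarysubsemigroup} the projections of $\QQ$ are exactly $Y$, so $\sigma^{\mathcal{Q}}$ is the least congruence on $\QQ$ containing $Y\times Y$. Hence for $\beta,\gamma\in\QQ$ and $f,g\in Y$ we have $\beta f\gamma\ \sigma^{\mathcal{Q}}\ \beta g\gamma$, and in particular $\beta\ \sigma^{\mathcal{Q}}\ \beta g$. Second, whenever $t\in T$, $e\in Y$ and $t\cdot e\in Y$, the element $te$ lies in $H^{\mathcal{Q}}\subseteq\QQ$. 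The canonical-form hypothesis is not really needed; it suffices that each $h_i\in H^{\mathcal{Q}}$.

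For $n=1$ take $e=e_1$; then $s_1\cdot e_1\in Y$ because $h_1\in H^{\mathcal{Q}}$. For $n>1$, apply induction to $h_2\dots h_n$ (a product of $n-1$ elements of $H^{\mathcal{Q}}$). This gives $h_2\dots h_n\ \sigma^{\mathcal{Q}}\ se'$, where $s=s_2\dots s_n$, $e'\in Y$ and $s\cdot e'\in Y$. Since $h_1\in\QQ$ and $se'\in H^{\mathcal{Q}}$, compatibility gives $\alpha\ \sigma^{\mathcal{Q}}\ s_1e_1se'$.

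Next I shrink $e'$ so that the final $T$-part acts into $Y$. By condition (B) choose $g\in Y$ with $s_1s\cdot g\in Y$, and put $e=e'g\in Y$. Then $e\le g$ gives $s_1s\cdot e\in Y$ by (A), and $e\le e'$ gives $s\cdot e\in Y$ by (A). Using $\beta\ \sigma^{\mathcal{Q}}\ \beta g$ with $\beta=s_1e_1se'$, we get $\alpha\ \sigma^{\mathcal{Q}}\ s_1e_1se$. In $\mathcal{P}_\ell(T,X)$ we have $se=(se)^+se=(s\cdot e)se$ and projections commute, so
\[s_1e_1se=s_1(s\cdot e)\,e_1\,(se).\]

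The main obstacle is deleting $e_1$. The point is that $s_1$ alone need not lie in $\QQ$, so $e_1$ cannot be replaced by $1_Y$ with left context $s_1$. The remedy is the factorisation just displayed. The left factor $s_1(s\cdot e)$ lies in $H^{\mathcal{Q}}$, because $s\cdot e\in Y$ and $s_1\cdot(s\cdot e)=s_1s\cdot e\in Y$. The right factor $se$ also lies in $H^{\mathcal{Q}}$. So the pair $(e_1,1_Y)\in Y\times Y$ may be used with both contexts inside $\QQ$, giving
\[s_1(s\cdot e)\,e_1\,(se)\ \sigma^{\mathcal{Q}}\ s_1(s\cdot e)(se)=s_1se=s_1s_2\dots s_ne.\]
By transitivity this proves the lemma, with $e\in Y$ and $s_1\dots s_n\cdot e\in Y$.
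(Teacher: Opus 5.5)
Your proof is correct and is essentially the paper's argument. The paper also inducts on $n$. It shrinks the projection by an element supplied by (B), using (A) to keep both $s_2\dots s_n\cdot e$ and $s_1\dots s_n\cdot e$ in $Y$. It then rewrites $s_1e_1(s_2\dots s_ne)=s_1(s_2\dots s_n\cdot e)\,e_1\,(s_2\dots s_ne)$, so that $e_1$ can be deleted between two factors lying in $H^{\mathcal{Q}}$.

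Your remark that only $h_i\in H^{\mathcal{Q}}$ is needed, not the canonical form, is also right. The paper uses the canonical-form hypothesis only to apply induction to the suffix. The lemma is in fact invoked for arbitrary products of elements of $H^{\mathcal{Q}}$ in Corollary~\ref{cor:PSigma=QSigma} and in the proof of Theorem~\ref{them:Q}.
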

\begin{proof} We proceed by induction. If $n=1$ then the result is clear. Suppose for induction that the result is true for $n-1$ where $n >1$. Then \[s_2e_2\dots s_ne_n\, \sigma^{\mathcal{Q}} \, s_2s_3\dots s_n e\] for some $e \in Y$ where $s_2s_3\dots s_n \cdot e \in Y$. Notice that
$s_2\dots s_ne\in H^{\mathcal{Q}}\subseteq \QQ$. By Condition (B) there exists $f \in Y$ such that $s_1s_2\dots s_n \cdot f \in Y$, whence
$s_1s_2\dots s_n  f\in H^{\mathcal{Q}}\subseteq \QQ$.
Now as $ef \leq e$ and $ef \leq f$, by Condition (A) we get $s_2s_3\dots s_n \cdot ef \in Y$ and $s_1s_2\dots s_n\cdot ef \in Y$, thus $s_2s_3\dots s_n ef,  s_1s_2\dots s_n ef \in H^{\mathcal{Q}}$. Then
\[
\begin{aligned}
 \alpha=(s_1e_1)(s_2e_2\dots s_ne_n)\,& \sigma^{\mathcal{Q}}\, (s_1e_1)(s_2s_3\dots s_ne)\\
&\sigma^{\mathcal{Q}}\, (s_1e_1)(s_2s_3\dots s_ne)f\\
&=(s_1e_1)(s_2s_3\dots s_nef)\\
&=(s_1e_1)(s_2s_3\dots s_nef)^+(s_2s_3\dots s_nef)\\
&=s_1(s_2s_3\dots s_nef)^+e_1(s_2s_3\dots s_nef).\end{aligned}
\]
Next notice that
\[s_1\cdot (s_2\dots s_nef)^+=
s_1\cdot (s_2\dots s_n\cdot ef)=
s_1\dots s_n\cdot ef\in Y\]
and so $s_1(s_2\dots s_nef)^+\in H^{\mathcal{Q}}$. Therefore
\[\alpha\,\sigma^{\mathcal{Q}}\,s_1(s_2s_3\dots s_nef)^+(s_2s_3\dots s_nef) =s_1\dots s_nef.\]
The result follows by induction.
\end{proof}

\begin{cor}\label{cor:PSigma=QSigma}
If $\alpha, \beta\in \QQ$ then $\alpha\, \sigma^{\mathcal{Q}}\, \beta$ if and only if $\alpha\, \sigma^{\mathcal{P}}\, \beta$.
\end{cor}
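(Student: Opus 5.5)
The forward direction is immediate. Since $\QQ$ is a subsemigroup of $\mathcal{P}_{\ell}(T,X)$ and $Y\subseteq X$, every pair of projections of $\QQ$ is a pair of projections of $\mathcal{P}_{\ell}(T,X)$. Hence any elementary step $c\,e\,d$, $c\,f\,d$ with $c,d\in\QQ$ and $e,f\in Y$ is also an elementary step in $\mathcal{P}_{\ell}(T,X)$. By the description of $\sigma_E$ following Definition~\ref{defn:sigma}, we get $\sigma^{\mathcal{Q}}\subseteq\sigma^{\mathcal{P}}\cap(\QQ\times\QQ)$.

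For the converse, we reduce to the canonical representatives given by Lemma~\ref{aSigmac(a)e}. By Lemma~\ref{lem:lucky}, write $\alpha$ in $H^{\mathcal{P}}$-canonical form as $\alpha=(s_1e_1)\cdots(s_ne_n)$ with each $s_ie_i\in H^{\mathcal{Q}}$. Then Lemma~\ref{aSigmac(a)e} gives $\alpha\,\sigma^{\mathcal{Q}}\,se$, where $s=s_1\cdots s_n$ and $e\in Y$ with $s\cdot e\in Y$. Similarly $\beta\,\sigma^{\mathcal{Q}}\,tf$ with $f\in Y$ and $t\cdot f\in Y$. By the forward direction, $\alpha\,\sigma^{\mathcal{P}}\,se\,\sigma^{\mathcal{P}}\,s$ and likewise $\beta\,\sigma^{\mathcal{P}}\,t$. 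So if $\alpha\,\sigma^{\mathcal{P}}\,\beta$, then $s\,\sigma^{\mathcal{P}}\,t$. Theorem~\ref{P1} says every $\sigma$-class of $\mathcal{P}_{\ell}(T,X)$ contains a unique element of $T$, so $s=t$.

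It remains to show $se\,\sigma^{\mathcal{Q}}\,sf$ whenever $e,f\in Y$ with $s\cdot e,\ s\cdot f\in Y$. Here $ef\le e$, so by Condition (A) we have $s\cdot ef\in Y$, and hence $sef\in H^{\mathcal{Q}}\subseteq\QQ$. Since $s e$ lies in $\QQ$ and $e,ef\in Y$, we get
\[se=(se)1_Y\;\sigma^{\mathcal{Q}}\;(se)f=sef.\]
Symmetrically, $sf\,\sigma^{\mathcal{Q}}\,sfe=sef$. Thus $se\,\sigma^{\mathcal{Q}}\,sf$, and therefore $\alpha\,\sigma^{\mathcal{Q}}\,\beta$.

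The only delicate point is ensuring that every intermediate element genuinely lies in $\QQ$ and that every identified pair of idempotents lies in $Y$. Condition (A) guarantees this, and Lemma~\ref{aSigmac(a)e} has already absorbed the substantive work. Note that the identity of $\QQ$ is $1_Y$, which need not equal $1_X$; the multiplication by $f$ above is therefore justified using $1_Y$ as the right-hand factor, not $1_X$.
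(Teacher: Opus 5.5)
Your proof is correct and follows the paper's argument: you reduce $\alpha,\beta$ via Lemma~\ref{aSigmac(a)e} to $se$ and $tf$ with $s=t$, and then join them through $sef$. The paper obtains $s=t$ from the morphism $c_T$ of Proposition~\ref{M/Sigma-T}, whereas you use the uniqueness of the $T$-representative in Theorem~\ref{P1}. One small remark: Condition (A) is not needed to see that $sef\in\QQ$, since $sef=(se)f$ is already a product of elements of $\QQ$.
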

\begin{proof}
 Suppose that $\alpha =h_1\dots h_n$ and $\beta=k_1\dots k_m$ where
 $h_i=s_ie_i$ and $k_j=t_jf_j$ for 
 $1\leq i\leq n$ and $1\leq j\leq m$ are elements of $H^{\mathcal{Q}}$.  Assume that  $\alpha\, \sigma^{\mathcal{P}}\, \beta$. It follows from Theorem~\ref{P1} and Proposition~\ref{M/Sigma-T}  that $s_1s_2\dots s_n = t_1t_2\dots t_m$, which we write as $w$. By Lemma~\ref{aSigmac(a)e} we have $\alpha\, \sigma^{\mathcal{Q}}\, we$ and $\beta\, \sigma^{\mathcal{Q}}\, wf$ for some $e, f \in Y$ where $w \cdot e \in Y$ and $w \cdot f \in Y$. Now \[we\,\sigma^{\mathcal{Q}}\, wef=wfe\,\sigma^{\mathcal{Q}}\, wf\]
 and so $\alpha \, \sigma^{\mathcal{Q}}\, \beta$. The converse is clear.
\end{proof}

\begin{them}\label{them:Q} 
Let $T$ be a monoid  and  $(X,\cdot)$  an order-preserving action of $T$ on the left of a semilattice $X$.  Let  $Y$ be a subsemilattice of $X$, with identity.  Suppose that
the following conditions hold for all $t\in T$ and $e,f\in Y$:

(A) if  $e \leq f$, then $t \cdot f \in Y$ implies $t \cdot e \in Y$;

(B) there exists $g \in Y$ such that $t \cdot g \in Y$.

Let 
\[H^{\mathcal{Q}}=\{ te:t\in T, e\in Y\mbox{ and }t\cdot e\in Y\}\]
and  \[\QQ(T, X, Y)=\langle H^{\mathcal{Q}}\rangle_{(2)}. \]

Then $\QQ(T, X, Y)$ is a $*$-left Ehresmann 
 biunary  monoid subsemigroup of \, 
 $\mathcal{P}_{\ell}(T,X)$, and $H^{\mathcal{Q}}$ is a proper  basis of  $\QQ(T, X, Y)$. 
\end{them}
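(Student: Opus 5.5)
The first claim, that $\QQ(T,X,Y)$ is a $*$-left Ehresmann biunary monoid subsemigroup of $\mathcal{P}_{\ell}(T,X)$, is exactly Lemma~\ref{lem:biunarysubsemigroup}. What remains is to show that $H^{\mathcal{Q}}$ is a $*$-subset, is atomic, is proper, and that $\QQ$ has $H^{\mathcal{Q}}$-canonical forms. Throughout, the strategy is to transfer each property from the corresponding one for $H^{\mathcal{P}}$ in Theorem~\ref{them:newform}. The transfer rests on three facts.

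\begin{enumerate}
\item[(a)] The operations $^+$ and $^*$ on $\QQ$ are the restrictions of those on $\mathcal{P}_{\ell}(T,X)$, by Lemma~\ref{lem:biunarysubsemigroup}.
\item[(b)] We have $\sigma^{\mathcal{Q}}=\sigma^{\mathcal{P}}\cap(\QQ\times\QQ)$, by Corollary~\ref{cor:PSigma=QSigma}.
\item[(c)] We have $X\cap\QQ=Y$ and $H^{\mathcal{P}}\cap\QQ=H^{\mathcal{Q}}$, by Lemma~\ref{lem:lucky}.
\end{enumerate}

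For $h=te\in H^{\mathcal{Q}}$ we have $h^*=e\in Y$. Since $e$ is the least right identity of $h$ from $X\supseteq Y$, it is also least among right identities from $Y$, so $H^{\mathcal{Q}}$ is a $*$-subset.

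\emph{Properness.} Suppose $h,k\in H^{\mathcal{Q}}$ with $h^*=k^*$ and $h\,\sigma^{\mathcal{Q}}\,k$. Then $h\,\sigma^{\mathcal{P}}\,k$, and Corollary~\ref{cor:proper} gives $h=k$.

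\emph{Canonical forms.} By Lemma~\ref{lem:lucky}, every $\alpha\in\QQ$ has its $H^{\mathcal{P}}$-canonical form built from elements of $H^{\mathcal{Q}}$. The defining conditions of a canonical form ($h_i^*<h_{i+1}^+$ and $h_i\notin E$) are identical in both settings, because $^*$ and $^+$ agree by (a) and $h\in Y$ if and only if $h\in X$ for $h\in H^{\mathcal{Q}}$. Hence this expression is an $H^{\mathcal{Q}}$-canonical form. Conversely, any $H^{\mathcal{Q}}$-canonical form is an $H^{\mathcal{P}}$-canonical form, so uniqueness follows from Corollary~\ref{cor:uniquenesshsim}.

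\emph{Axioms (H1)--(H3).} (H1) holds since $Y\subseteq H^{\mathcal{Q}}$. For (H2), take $te\in H^{\mathcal{Q}}$ and $f\in Y$. Then $ef\le e$, and condition (A) gives $t\cdot ef\in Y$, so $tef\in H^{\mathcal{Q}}$ with $(tef)^*=ef$. For (H3), take $h=te$ and $k=sf\notin Y$ with $e\ge s\cdot f$. Then $hk=tsf$, and since $t\cdot e\in Y$, condition (A) gives $ts\cdot f=t\cdot(s\cdot f)\in Y$. This is precisely the computation already made in the proof of Lemma~\ref{lem:lucky}, and it yields $(hk)^*=f=k^*$.

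\emph{Axiom (H4).} This is Lemma~\ref{aSigmac(a)e}: every $\alpha\in\QQ$ is $\sigma^{\mathcal{Q}}$-related to some $we$ with $w\cdot e\in Y$, which lies in $H^{\mathcal{Q}}$.

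\emph{Axiom (H5).} I expect this to be the main obstacle, since the trivial witness $u=t$ used for $H^{\mathcal{P}}$ need not lie in $H^{\mathcal{Q}}$. Let $h=te$, $k=sf$ and $w=rg$ in $H^{\mathcal{Q}}$ with $hk\,\sigma\,w$ and $k^*=w^*$, so $f=g$. By Proposition~\ref{M/Sigma-T} applied in $\mathcal{P}_{\ell}(T,X)$, we get $ts=r$. I will take $u=t(s\cdot f)$. Its second coordinate $s\cdot f=k^+$ lies in $Y$, and
\[
t\cdot(s\cdot f)=ts\cdot f=r\cdot g\in Y,
\]
since $w\in H^{\mathcal{Q}}$. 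So $u\in H^{\mathcal{Q}}$, $u\,\sigma\,t\,\sigma\,h$ (by (b)), and $u^*=s\cdot f=k^+$. This gives (H5). Notably, the hypothesis $k^*=w^*$ is exactly what makes $u$ land in $H^{\mathcal{Q}}$.

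Together these show $H^{\mathcal{Q}}$ is a proper basis for $\QQ(T,X,Y)$.
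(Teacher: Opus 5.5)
Your proof is correct and follows essentially the same route as the paper. You transfer properness and uniqueness of canonical forms from $H^{\mathcal{P}}$ via Corollary~\ref{cor:PSigma=QSigma} and Lemma~\ref{lem:lucky}, take (H4) from Lemma~\ref{aSigmac(a)e}, and use the same witness $t(sf)^+$ for (H5); the only cosmetic difference is that you verify (H2) and (H3) directly from condition (A), whereas the paper deduces them from $H^{\mathcal{P}}\cap\QQ=H^{\mathcal{Q}}$.
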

\begin{proof} We have shown in Lemma~\ref{lem:biunarysubsemigroup}  that   $\QQ$  is $*$-left Ehresmann biunary monoid subsemigroup of $\mathcal{P}_{\ell}(T,X)$, with semilattice of  projections $Y$. It remains to show that $H^{\mathcal{Q}}$ has the properties claimed.

Certainly $Y\subseteq H^{\mathcal{Q}}$ so that (H1) holds. In view of Lemma~\ref{lem:lucky} conditions (H2) and (H3) hold since they are true for $H^{\mathcal{P}}$. For (H4), if
$\alpha=h_1\dots h_n$ where $h_i=s_ie_i$, $1\leq i\leq n$, then by Lemma~\ref{aSigmac(a)e}, we have 
$\alpha\,\sigma^{\mathcal{Q}}\,\ s_1\dots s_ne$, for some $e\in Y$ with $s_1\dots s_n\cdot e\in Y$, that is, 
$s_1\dots s_ne\in H^{\mathcal{Q}}$.  

For (H5), let $h=te, k=sf$ and $w=ug$ be elements of $H^{\mathcal{Q}}$ with $hk\,\sigma^{\mathcal{Q}}\, w$ and $k^*=w^*$.  By Lemma~\ref{lem:HH} we get $f=g$.  Corollary~\ref{cor:PSigma=QSigma} gives  $tesf=hk\,\sigma^{\mathcal{P}}\, w=ug$, and from Theorem~\ref{P1} it follows that  $ts=u$. Then 
$t\cdot (sf)^+=t\cdot (s\cdot f)= ts\cdot f=u\cdot f =u\cdot g\in Y$, whence  
$t(sf)^+\in H^{\mathcal{Q}}$ 
and $(t(sf)^+)^*=(sf)^+=k^+$. Further,
$te\, \sigma^{\mathcal{P}}\, t(sf)^+$ and again by Corollary~\ref{cor:PSigma=QSigma}, we obtain 
$te\, \sigma^{\mathcal{Q}}\, t(sf)^+$.  Thus  (H5) holds, and therefore $H^{\mathcal{Q}}$ is  atomic. 

The fact that $\QQ$ is $H^{\mathcal{Q}}$-proper follows from Theorem~\ref{them:newform}, which gives that $\mathcal{P}_{\ell}(T,X)$ is  $H^{\mathcal{P}}$-proper, together with Corollary~\ref{cor:PSigma=QSigma}. 

Let $\alpha\in \QQ$. It is clear 
from Lemma~\ref{lem:lucky}  that the unique $H^{\mathcal{P}}$-canonical form of  $\alpha$ is an $H^{\mathcal{Q}}$-canonical form. On the other hand, if $\alpha=h_1\dots h_n$ is in $H^{\mathcal{Q}}$-canonical form then this will be in $H^{\mathcal{P}}$-canonical form unless some $h_i\in X$, for $2\leq i\leq n$. But if $h_i\in X$, then $h_i\in X\cap \QQ=Y$, by Lemma~\ref{lem:lucky}, contradicting $h_1\dots h_n$ being an $H^{\mathcal{Q}}$-canonical form. We conclude  that $\QQ$ has   $H^{\mathcal{Q}}$-canonical forms. \end{proof}

\section{A structure theorem }\label{sec:structtheorem}

The aim of this section is to establish the converse to Theorem~\ref{them:Q}.  To this end, let $Q$
be a left Ehresmann monoid  such that $Q=\langle H\rangle_{(2)}$ for a proper basis $H$.  We show that $Q$ is isomorphic to  a  biunary monoid subsemigroup  $\QQ(T,\X,\Y)$ of some $\mathcal{P}_{\ell}(T,\X)$ where $T = Q/\sigma$ and $\Y$ is isomorphic to  the semilattice of projections $E$ of $Q$. We begin by remarking that from Theorem~\ref{prop:total}  we have that $Q$ is $*$-left Ehresmann with $*$ obtained as in that theorem.

We must first construct the semilattice $\mathcal{X}$.  To do so we show that $T$ acts partially and strongly on $E$ by order-preserving maps, and then call upon the results of Section~\ref{sec:actions}. 

We define a relation $\cdot$  by the rule that for all $[m] \in T$ and $e \in E$,
\[ \exists [m] \cdot e  \mbox{ if } \exists h \in H\mbox{ such that }\ h\, \sigma\, m\ \mbox{and}\ h^\ast \geq e\]
and then
\[[m] \cdot e=(he)^+.\]
We show this is the required partial  action of $T$  on $E$. 

\begin{lem}\label{me+=ue+}
Let  $h, k \in H$ and $e \in E$  such  that $h\, \sigma\, k$ and $h^\ast, k^\ast \geq e$.  Then $he = ke$. Consequently, $\cdot$ is well defined. 
\end{lem}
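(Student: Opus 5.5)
The plan is to reduce the claim to properness of $H$ via Lemma~\ref{lem:proper}, using condition (H2) to keep everything inside $H$. Let $h,k\in H$ with $h\,\sigma\,k$ and $e\le h^*,k^*$. By (H2), both $he$ and $ke$ lie in $H$, and
\[(he)^*=h^*e=e=k^*e=(ke)^*,\]
since $e\le h^*$ and $e\le k^*$. Moreover $\sigma$ is a congruence, so $h\,\sigma\,k$ gives $he\,\sigma\,ke$. Properness of $H$ (Definition~\ref{defn:hproper}) applied to the pair $he,ke\in H$ then yields $he=ke$.

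Alternatively, one can apply Lemma~\ref{lem:proper} directly. Since $h\,\sigma\,k$, it gives $hk^*=kh^*$. Multiplying on the right by $e$ and using $k^*e=e=h^*e$ gives $he=hk^*e=kh^*e=ke$. Either route is immediate, and I would present the first one, which uses only the definition of proper together with (H2).

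For the ``consequently'' part, suppose $[m]=[m']$ in $T$, and suppose $h,k\in H$ are two witnesses for $\exists [m]\cdot e$. That is, $h\,\sigma\,m$, $k\,\sigma\,m'$ and $h^*,k^*\ge e$. Since $m\,\sigma\,m'$, transitivity gives $h\,\sigma\,k$. The first part then gives $he=ke$, hence $(he)^+=(ke)^+$. So the value $[m]\cdot e$ depends neither on the representative $m$ of the $\sigma$-class nor on the choice of witness $h$, and the existence condition itself depends only on $[m]$.

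I expect no real obstacle here. The only point needing care is to check that $he$ and $ke$ genuinely belong to $H$ and that their $*$-values are computed by (H2) as $h^*e$ and $k^*e$. That is exactly what (H2) supplies, and it is what allows the properness hypothesis, which is stated only for elements of $H$, to be invoked.
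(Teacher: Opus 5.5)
Your proposal is correct and your first route is essentially the paper's own proof: you use (H2) to place $he,ke$ in $H$ with $(he)^*=e=(ke)^*$, note that $he\,\sigma\,ke$, and conclude $he=ke$ by properness of $H$. Your explicit check that the value of $[m]\cdot e$ is independent of both the representative $m$ and the witness $h$ usefully spells out the well-definedness claim that the paper leaves implicit.
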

\begin{proof}  We have  $he \, \sigma\, h\, \sigma\, k \ \sigma\ ke$, and so $he\, \sigma\, ke$. By (H2) we get $he, ke \in H$ and  $(he)^{\ast}  = h^\ast e = e$ as $h^\ast \geq e$. Similarly, $(ke)^{\ast} = e$. Since $H$ is proper $he = ke$ follows.
\end{proof}

\begin{lem}\label{stronglypartialaction2} The pair $(E,\cdot)$  is a strong partial action of  $T$  on the set $E$.
\end{lem}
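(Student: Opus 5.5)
We must verify three things: that $[1]\cdot e$ exists and equals $e$; that if $[m]\cdot e$ and $[n]\cdot([m]\cdot e)$ both exist then so does $[nm]\cdot e$, with $[n]\cdot([m]\cdot e)=[nm]\cdot e$; and strongness, that if $[m]\cdot e$ and $[nm]\cdot e$ exist then $[n]\cdot([m]\cdot e)$ exists. Well-definedness is Lemma~\ref{me+=ue+}. Throughout we use the facts that $hh^*=h$, that $he\in H$ with $(he)^*=h^*e$ by (H2), and that $\sigma$ is a congruence.

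\textbf{Identity.} Take $h=1\in E\subseteq H$ by (H1). Then $h^*=1\geq e$, so $[1]\cdot e$ exists and equals $(1e)^+=e$.

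\textbf{Composition.} Suppose $h\,\sigma\,m$ with $h^*\geq e$, and put $f=(he)^+$. Suppose also $k\,\sigma\,n$ with $k^*\geq f$. Set $h'=he$, so $h'\in H$ and $h'^*=e$ by (H2). There are two cases.
\begin{itemize}
\item If $h'\notin E$, then $k^*\geq f=h'^+$, so (H3) gives $kh'\in H$ with $(kh')^*=h'^*=e$. Since $kh'\,\sigma\,nm$, the element $[nm]\cdot e$ exists and equals $(kh'e)^+=(kh')^+$.
\item If $h'\in E$, then $h'=h'^*=e$ and $f=e$. Now $kh'=ke\in H$ by (H2), with $(ke)^*=k^*e=e$, and the same conclusion follows.
\end{itemize}
In both cases we compare with $[n]\cdot f=(kf)^+$. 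Since $kh'=kfh'$, we get $(kh')^+=(kf h')^+=(k(fh')^+)^+=(kf)^+$, because $(h')^+=f$ (this uses the identity $(xy)^+=(xy^+)^+$). So $[n]\cdot([m]\cdot e)=[nm]\cdot e$.

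\textbf{Strongness.} This is the main obstacle. Suppose $h\,\sigma\,m$ with $h^*\geq e$, and $w\,\sigma\,nm$ with $w^*\geq e$. We need some $u\in H$ with $u\,\sigma\,n$ and $u^*\geq f=(he)^+$. The plan is to apply (H5) to the triple $(k,h',w')$, where $h'=he$, $w'=we\in H$, and $k$ is any element of $H$ that is $\sigma$-related to $n$; such a $k$ exists by (H4). We have $kh'\,\sigma\,nm\,\sigma\,w'$ and $h'^*=e=w'^*$. Hence (H5) yields $u\in H$ with $u\,\sigma\,k\,\sigma\,n$ and $u^*\geq h'^+=f$, as required.

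There is a degenerate case when $h'\in E$, since then $f=e$. Here (H5) still applies as stated, because it places no restriction on membership of $E$. So the argument goes through, and $[n]\cdot f$ exists. The equality of values then follows from the composition step.
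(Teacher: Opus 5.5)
Your proof is correct and follows the paper's argument essentially step for step: the identity axiom via $1\in E\subseteq H$, composition via (H2) and (H3) applied to $he$, and strongness via (H4) followed by (H5) applied to the triple $(k,he,we)$. Your separate treatment of the case $he\in E$ is a small gain in rigour over the paper. In that case $he=e$, and you use (H2) instead of (H3), whose hypothesis requires the second factor to lie outside $E$; the paper invokes (H3) without checking this.
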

\begin{proof} The identity of $T$ is $[1]$ and as $E\subseteq H$ and $1=1^*\geq e$ for any $e\in E$, we see that  $[1]\cdot e$ exists and $[1]\cdot e=(1e)^+=e^+=e$. 

If $\exists [t] \cdot e$ and $\exists [s] \cdot ([t] \cdot e)$ for $[s], [t] \in T$ and $e \in E$, then there exist $h, u \in H$ such that $h \, \sigma\, t$, $u\, \sigma\, s$, $h^\ast \geq e$ and $u^\ast \geq [t] \cdot e = (he)^+$. By (H2) we have $he \in H$ and by (H3) we get $uhe \in H$ and  $(uhe)^\ast = (he)^\ast = h^\ast e = e$. Clearly $uhe\, \sigma\, st$. Hence $\exists [st] \cdot e$  and $[st]\cdot e=(uhee)^+=(uhe)^+$.  Further, we have  $[s] \cdot ([t] \cdot e) = [s] \cdot (he)^+ = (u(he)^+)^+ = (uhe)^+ = [st] \cdot e$.

Conversely, if $\exists [t] \cdot e$ and $\exists [st] \cdot e$ for $[s], [t] \in T$ and $e \in E$, then there exist $h, w \in H$ such that
 \[h \, \sigma\, t,\,  h^\ast \geq e\mbox{ and } [t] \cdot e = (he)^+,\]
 and 
 \[w\, \sigma\, st,\,  w^\ast \geq  e\mbox{ and } [st]\cdot e = (we)^+.\]
 By (H2) we know that $he,we\in H$ and $(he)^{\ast} = h^{\ast}e = e = w^{\ast}e = (we)^{\ast}$.  By (H4) there exists $u \in H$ such that $u\, \sigma\, s$.  Then $u(he)\, \sigma\, st\, \sigma\, we $. Moreover, by (H5),   we obtain that there exists $v \in H$ such that $v \, \sigma\,\ u$ and $v^{\ast} \geq (he)^+$, that is, $v \, \sigma\, s$ and $v^{\ast} \geq [t]\cdot e$, which implies that $\exists [s] \cdot ([t] \cdot e)$ as required.
\end{proof}

In Lemma~\ref{stronglypartialaction2}
we explicitly see the role of (H5) in ensuring the partial action is strong.

\begin{lem}\label{stronglypartialaction}  The  pair $(E,\cdot)$  is a strong order-preserving partial action of  $T$  on the semilattice $E$.   For any $[t]\in T$ there is an $e\in E$ such that $\exists [t]\cdot e$.
\end{lem}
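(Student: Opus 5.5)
Given Lemma~\ref{stronglypartialaction2}, which shows that $(E,\cdot)$ is a strong partial action of $T$ on the set $E$, two things remain. The first is the order-preserving condition of Definition~\ref{defn:opaction}. The second is fullness: every $[t]\in T$ acts on some element of $E$.

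For the order-preserving condition, let $e,f\in E$ with $e\leq f$, and suppose $\exists [m]\cdot f$. Then there is $h\in H$ with $h\,\sigma\, m$ and $h^*\geq f$. Since $h^*\geq f\geq e$, the same $h$ witnesses that $\exists [m]\cdot e$. By Lemma~\ref{me+=ue+} we may compute both values with this $h$, so $[m]\cdot e=(he)^+$ and $[m]\cdot f=(hf)^+$. Because $e=fe$, we have $he=hfe=(hf)e$. The left Ehresmann identity $(xy)^+=(xy^+)^+$ and $e=e^+$ then give
\[(he)^+=((hf)e)^+=(hf)^+ e' \]
for the appropriate projection $e'$; more concretely, in a left Ehresmann monoid $(ae)^+\leq a^+$ for any projection $e$, since $a^+ae=ae$ and $(ae)^+$ is the least projection left identity for $ae$ (Lemma~\ref{lem:folk}). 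Applying this with $a=hf$ yields $[m]\cdot e=(he)^+\leq (hf)^+=[m]\cdot f$. I expect this step to be routine. The only care needed is to use the well-definedness from Lemma~\ref{me+=ue+}, so that a single witness $h$ serves both $e$ and $f$.

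For fullness, let $[t]\in T$. By (H4) there is $h\in H$ with $h\,\sigma\, t$, so $[h]=[t]$. Put $e=h^*\in E$. Then $h^*\geq e$ trivially, hence $\exists [t]\cdot e$, with $[t]\cdot e=(hh^*)^+=h^+$.

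There is no real obstacle here. The one point to get right is that the witness $h$ in the definition of $\cdot$ may be chosen uniformly, which Lemma~\ref{me+=ue+} guarantees. Together with Lemma~\ref{stronglypartialaction2}, these two facts give the statement.
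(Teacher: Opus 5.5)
Your proof is correct and matches the paper's argument. The witness $h$ for $[m]\cdot f$ also witnesses $[m]\cdot e$, and fullness follows by taking $h\,\sigma\,t$ from (H4) and acting on $h^*$. The only difference is cosmetic: you verify $(he)^+\leq(hf)^+$ directly from the minimality of $(hfe)^+$ as a projection left identity, whereas the paper cites Lemma~\ref{lem:naturalaction}, which says the natural action $e\mapsto (he)^+$ is order-preserving. The intermediate line involving the unspecified projection $e'$ is superfluous and can be deleted.
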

\begin{proof} 
Suppose that $[t]\in T$ and $e, f\in E$ with $e \leq f$ and  $\exists [t] \cdot f$. Then there exists $h \in H$ such that $t\, \sigma\, h$ and $h^\ast \geq f$.   Thus   $h^{\ast} \geq e$ and so $\exists [t] \cdot e$.

Further,  we have $[t] \cdot f \geq [t] \cdot e$ since $[t] \cdot f = (hf)^+ \geq (he)^+ =[t] \cdot e$, by Lemma~\ref{lem:naturalaction}.

To see that the final statement holds, let $[t]\in T$. There exists $h\in H$ with $t\,\sigma\, h$ and clearly
$[t]\cdot h^*$ exists.
\end{proof}

From Theorem~\ref{them:slglobal}  the partial action $(E, \cdot)$ admits a globalisation $(\kappa, \mathcal{X}, \bullet)$ where $\mathcal{X}$ is a semilattice with identity, $\kappa:E\rightarrow \X$ is a semilattice embedding,   and $(\mathcal{X},\bullet)$ is an order-preserving action  of $T$. We maintain the notation of Theorem~\ref{them:slglobal}, so that $e\kappa=(1,e)^\omega$.
For convenience, we remind the reader that since  $(\kappa, \mathcal{X}, \bullet)$ is a globalisation of $(E,\cdot)$, and utilising the definition of $\kappa$, the following statements are equivalent for all $[t]\in T$ and $e\in E$:

\[
\exists [t]\cdot e;\,\,\,  [t]\bullet e\kappa\in E\kappa;\,\,\, 
[t]\bullet (1,e)^\omega=(1,f)^\omega\mbox{ , for some }f\in E\]
and then if any of these statements are true
\[ ([t]\cdot e)\kappa=(1, [t]\cdot e)^\omega=[t]\bullet (1,e)^\omega,\]
so that $f=[t]\cdot e.$

Let $\Y=E\kappa$.
 Lemma~\ref{lem:andb}  follows from Lemma~\ref{stronglypartialaction} and  the fact that $(\kappa, \mathcal{X}, \bullet)$ is a globalisation of $(E, \cdot)$.

\begin{lem}\label{lem:andb}
The action of $T$ on $\mathcal{X}$ with respect to the subsemilattice $\Y$ satisfies conditions (A) and (B).
\end{lem}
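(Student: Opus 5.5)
The plan is to verify (A) and (B) directly for the subsemilattice $\Y=E\kappa$ of $\X$, using the equivalences recorded just before the statement. Since $\kappa$ is a semilattice embedding, $\Y$ is a subsemilattice of $\X$. It has identity $1\kappa=(1,1)^\omega$, because $1$ is the identity of $E$ and $\kappa$ preserves meets. The one fact I will use repeatedly is that for $[t]\in T$ and $e\in E$,
\[
[t]\bullet e\kappa\in \Y \iff \exists\, [t]\cdot e, \quad\text{and then}\quad [t]\bullet e\kappa=([t]\cdot e)\kappa .
\]
This is exactly the globalisation property from Theorem~\ref{them:slglobal}.

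For (A), take $[t]\in T$ and elements of $\Y$, which I write as $e\kappa, f\kappa$ with $e,f\in E$. Suppose $e\kappa\leq f\kappa$ and $[t]\bullet f\kappa\in \Y$. Since $\kappa$ is a semilattice embedding, it is an order-embedding, so $e\leq f$ in $E$. From $[t]\bullet f\kappa\in\Y$ the equivalence gives $\exists[t]\cdot f$. Lemma~\ref{stronglypartialaction} says the partial action is order-preserving, so its domains are order ideals and $\exists[t]\cdot e$. Applying the equivalence in the other direction gives $[t]\bullet e\kappa\in\Y$, which is (A).

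For (B), take $[t]\in T$. The final statement of Lemma~\ref{stronglypartialaction} supplies $e\in E$ with $\exists[t]\cdot e$, for instance $e=h^*$ where $h\in H$ and $h\,\sigma\, t$. The equivalence then gives $[t]\bullet e\kappa\in\Y$, so $g=e\kappa$ works.

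The only point needing care is that $\kappa$ reflects the order, which is needed in (A). This follows because $\kappa$ is an injective meet-preserving map: if $e\kappa\leq f\kappa$ then $(ef)\kappa=e\kappa\wedge f\kappa=e\kappa$, so $ef=e$ by injectivity. It is also the content of Lemma~\ref{EYiso}. Nothing else seems to be an obstacle.
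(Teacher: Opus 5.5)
Your proof is correct and takes the same approach as the paper. The paper just says the lemma follows from Lemma~\ref{stronglypartialaction} (the partial action is order-preserving and full) together with the fact that $(\kappa,\X,\bullet)$ is a globalisation of $(E,\cdot)$. You have written out those two steps explicitly, including the check that $\kappa$ reflects the order.
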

We may now define $\mathcal{P}:=\mathcal{P}_{\ell}(T,\X)$ 
and the $*$-left Ehresmann monoid  
$$\QQ:=\QQ(T, \X,\Y)=\langle H^{\mathcal{Q}}\rangle_{(2)}$$
where 
\[H^{\mathcal{Q}}=\{ [t](1,e)^\omega: (1,e)^\omega,   [t]\bullet (1,e)^\omega \in \mathcal{Y}\}=\{[t](1,e)^\omega: \exists [t]\cdot e\}. \]

\begin{lem}\label{lem:htrick} We have that
\[H^{\mathcal{Q}}=\{ [h](1,h^*)^\omega:h\in H\},\]
and for $h\in H,$ 
\[([h](1,h^*)^\omega)^+=h^+\kappa\mbox{ and }
([h](1,h^*)^\omega)^*=h^*\kappa.\]
\end{lem}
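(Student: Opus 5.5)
We prove the two inclusions separately and then compute the unary operations using Lemma~\ref{lem:HH}$(i)$ inside $\mathcal{P}=\mathcal{P}_{\ell}(T,\X)$. Throughout we identify $[h]$ with the element $[h]\in T=Q/\sigma$, regarded as lying in $\mathcal{P}$.

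\emph{The inclusion $\supseteq$.} Let $h\in H$. By definition of the partial action, $[h]\cdot h^*$ exists, witnessed by $h$ itself, since $h\,\sigma\,h$ and $h^*\geq h^*$. Hence $[h](1,h^*)^\omega\in H^{\mathcal{Q}}$ by the second description of $H^{\mathcal{Q}}$.

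\emph{The inclusion $\subseteq$.} Let $[t](1,e)^\omega\in H^{\mathcal{Q}}$, so that $\exists[t]\cdot e$. By definition there is $k\in H$ with $k\,\sigma\,t$ and $k^*\geq e$. Put $h=ke$. By (H2) we have $h\in H$ and $h^*=k^*e=e$. Also $h=ke\,\sigma\,k\,\sigma\,t$, so $[h]=[t]$. Therefore $[t](1,e)^\omega=[h](1,h^*)^\omega$, which gives equality of the two sets.

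\emph{The unary operations.} For $h\in H$, the element $[h](1,h^*)^\omega$ has the form $te$ in $H^{\mathcal{P}}$, with $t=[h]$ and $e=(1,h^*)^\omega=h^*\kappa$. By Lemma~\ref{lem:HH}$(i)$ its $*$ is $h^*\kappa$, and its $+$ is $[h]\bullet h^*\kappa$. Since $\exists[h]\cdot h^*$, the globalisation property gives
\[
[h]\bullet h^*\kappa=([h]\cdot h^*)\kappa=(hh^*)^+\kappa=h^+\kappa .
\]
Here we use that $h$ itself may serve as the witness in the definition of $[h]\cdot h^*$, and Lemma~\ref{me+=ue+} shows the value does not depend on the witness.

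The unary operations in $\QQ$ are the restrictions of those in $\mathcal{P}$, by Lemma~\ref{lem:biunarysubsemigroup}, so these computations hold in $\QQ$. The only delicate point is consistently identifying $[t](1,e)^\omega$ as an element $te$ of $H^{\mathcal{P}}$ so that Lemma~\ref{lem:HH} applies. This is immediate, since $T$ and $\X$ embed in $\mathcal{P}$ by Theorem~\ref{P1}.
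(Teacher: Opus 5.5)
Your proof is correct and follows the paper's argument essentially step for step. The witness $h$ itself gives $\supseteq$, and the element $h=ke$ obtained via (H2) gives $\subseteq$. The unary operations come from Lemma~\ref{lem:HH}$(i)$ together with the globalisation identity $[h]\bullet h^*\kappa=([h]\cdot h^*)\kappa=h^+\kappa$. Your extra remarks, on independence of the witness (Lemma~\ref{me+=ue+}) and on $\QQ$ inheriting its unary operations from $\mathcal{P}$, make explicit what the paper leaves implicit.
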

\begin{proof} Let $h\in H$; certainly $[h]\cdot h^*$ is defined so that $[h](1,h^*)^\omega\in H^{\mathcal{Q}}$.
Conversely, if $[t](1,e)^\omega\in H^{\mathcal{Q}}$ then
$\exists [t]\cdot e$ and so $t\,\sigma\, k$ for some $k\in H$ with $k^*\geq e$. Then for 
$h=ke$ we have  $t\,\sigma\, h$  and  $h^*=k^*e=e$. Hence $[t](1,e)^\omega=
[h](1,h^*)^\omega$ and the first statement holds. 

Now let  $h\in H$. Observe that $[h]\cdot h^*=(hh^*)^+=h^+$. Then
\[([h](1,h^*)^\omega)^+=[h]\bullet (1,h^*)^\omega=
(1,[h]\cdot h^*)^\omega=(1,h^+)^\omega=h^+\kappa.\]
Finally, 
\[([h](1,h^*)^\omega)^*=(1,h^*)^\omega=h^*\kappa,\]  
as required.
\end{proof}

\begin{lem}\label{lem:theta} Define $\theta:H\rightarrow H^{\mathcal{Q}}$ by  $h\theta=[h](1,h^*)^\omega$.
Then $\theta$ is a bijection such that 
$h^*\theta=(h\theta)^*$ and $h^+\theta=(h\theta)^+$.
\end{lem}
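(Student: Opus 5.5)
Lemma~\ref{lem:htrick} already shows that $\theta$ maps onto $H^{\mathcal{Q}}$ and computes $(h\theta)^+$ and $(h\theta)^*$. So what remains is injectivity, together with the identities involving $h^*\theta$ and $h^+\theta$.

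For injectivity, suppose $h\theta=k\theta$ for $h,k\in H$, that is, $[h](1,h^*)^\omega=[k](1,k^*)^\omega$ in $\mathcal{P}_{\ell}(T,\X)$. By Lemma~\ref{lem:HH}$(iv)$, applied in $\mathcal{P}_{\ell}(T,\X)$ with $T=Q/\sigma$, this gives $[h]=[k]$ in $T$ and $(1,h^*)^\omega=(1,k^*)^\omega$. The first equality means $h\,\sigma\,k$ in $Q$. The second says $h^*\kappa=k^*\kappa$, and since $\kappa$ is injective (Lemma~\ref{EYiso}), $h^*=k^*$. Because $H$ is proper, $h=k$. Hence $\theta$ is a bijection $H\to H^{\mathcal{Q}}$.

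For the unary operations, first note that any $e\in E$ lies in $H$ by (H1). Since $e\sigma 1$, we have $[e]=[1]$, the identity of $T$, and $e^*=e$. Therefore
\[e\theta=[1](1,e)^\omega=(1,e)^\omega=e\kappa,\]
using our convention that the identity $[1]$ of $T$ is the identity of $\mathcal{P}_{\ell}(T,\X)$, and $[1](1,e)^\omega$ is identified with $(1,e)^\omega\in\X$. Taking $e=h^*$ and $e=h^+$ gives $h^*\theta=h^*\kappa$ and $h^+\theta=h^+\kappa$. By Lemma~\ref{lem:htrick} these equal $(h\theta)^*$ and $(h\theta)^+$ respectively.

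The only delicate step is the identification $[1]x=x$ for $x\in\X$ inside $\mathcal{P}_{\ell}(T,\X)$. This holds because $1_T\sim 1_X$ in Theorem~\ref{P1}, and both serve as the identity. No other obstacle is expected.
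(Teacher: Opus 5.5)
Your proposal is correct and follows essentially the same route as the paper. Injectivity comes from Lemma~\ref{lem:HH}$(iv)$, the injectivity of $\kappa$ and the properness of $H$; then $e\theta=e\kappa$ for $e\in E$ combined with Lemma~\ref{lem:htrick} gives surjectivity and the identities for $+$ and $*$.
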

\begin{proof} To see that $\theta$ is one-one, suppose that
$h\theta=k\theta$ for some $h,k\in H$. Then
$[h](1,h^*)^\omega=[k](1,k^*)^\omega$. From Lemma~\ref{lem:HH} we deduce that $[h]=[k]$ so that $h\,\sigma\, k$, and $(1,h^*)^\omega=(1,k^*)^\omega$. The latter statement says that $h^*\kappa=k^*\kappa$ and as $\kappa$ is an embedding, we  obtain $h^*=k^*$. The fact that $h=k$ now comes from $H$ being proper. 

For any $e\in E$ notice that 
\[e\theta=[e](1,e)^\omega=(1,e)^\omega=e\kappa.\]
The lemma now follows from Lemma~\ref{lem:htrick}.\end{proof}

\begin{cor}\label{cor:nf} An element $h_1\dots h_n$ of $Q$ is in $H$-canonical form if and only if 
$(h_1\theta)\dots (h_n\theta)$ is in $H^{\mathcal{Q}}$-canonical form.
\end{cor}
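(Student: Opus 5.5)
The plan is to check that $\theta$ matches each defining condition of an $H$-canonical form with the corresponding condition of an $H^{\mathcal{Q}}$-canonical form. By Definition~\ref{defn:hnormal}, $h_1\dots h_n$ is in $H$-canonical form when $h_i^*<h_{i+1}^+$ for $1\leq i<n$ and $h_i\notin E$ for $2\leq i\leq n$. Likewise, $(h_1\theta)\dots(h_n\theta)$ is in $H^{\mathcal{Q}}$-canonical form when $(h_i\theta)^*<(h_{i+1}\theta)^+$ and $h_i\theta\notin\Y$, where $\Y$ is the semilattice of projections of $\QQ$ by Lemma~\ref{lem:biunarysubsemigroup}. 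Every element of $H^{\mathcal{Q}}$ is some $h\theta$ because $\theta$ is a bijection (Lemma~\ref{lem:theta}). So it suffices to prove the two conditions term by term and in both directions.

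For the order condition, use Lemma~\ref{lem:theta} (equivalently Lemma~\ref{lem:htrick}): $(h_i\theta)^*=h_i^*\kappa$ and $(h_{i+1}\theta)^+=h_{i+1}^+\kappa$. Since $\kappa$ is a semilattice embedding (Theorem~\ref{them:slglobal}), it preserves and reflects both $\leq$ and equality. Hence $h_i^*<h_{i+1}^+$ holds if and only if $h_i^*\kappa<h_{i+1}^+\kappa$.

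For the non-projection condition, I would show that for $h\in H$ we have $h\in E$ if and only if $h\theta\in\Y$.
\begin{itemize}
\item[] If $h=e\in E$, then $e\theta=e\kappa\in\Y$, as computed in the proof of Lemma~\ref{lem:theta}.
\item[] Conversely, suppose $h\theta=[h](1,h^*)^\omega\in\Y\subseteq\X$. Lemma~\ref{lem:HH}$(iii)$, applied in $\mathcal{P}_\ell(T,\X)$, gives $[h]=1_T$, that is, $h\,\sigma\,1$. Corollary~\ref{cor:hSigma1_M} then gives $h\in E$; it applies because $H$ is proper and satisfies (H2).
\end{itemize}

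The only delicate point is this second direction. It needs $1_T=[1]$ in $T=Q/\sigma$, so that $[h]=1_T$ really means $h\,\sigma\,1$, and it needs the identification of $\Y$ with the projections of $\QQ$. I expect no further obstacle; the rest is bookkeeping of the two definitions.
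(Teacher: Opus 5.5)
Your proposal is correct and is essentially the paper's own proof. The order condition transfers through Lemma~\ref{lem:theta} together with $\kappa$ being a semilattice embedding, and the non-projection condition transfers via $e\theta=e\kappa$ in one direction and Lemma~\ref{lem:HH} followed by Corollary~\ref{cor:hSigma1_M} in the other.
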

\begin{proof} First, observe that for $h,k\in H$, we have 
\[h^*<k^+\Leftrightarrow h^*\kappa<k^+\kappa\Leftrightarrow h^*\theta<k^+\theta\Leftrightarrow (h\theta)^*<(k\theta)^+,\]
 using Lemma~\ref{lem:theta}. 

 If $h\in E$, then $h\theta=h\kappa\in \Y$. On the other hand, if $h\theta=[h](1,h^*)^\omega\in \Y$, then by Lemma~\ref{lem:HH} we get that $[h]$ is the  identity of $Q/\sigma$, that is, $[h]=[1]$. Hence  
 Corollary~\ref{cor:hSigma1_M} gives that $h\in E$. The result follows.
    \end{proof}

 We are now in a position to state the main result of this section.

\begin{them}\label{them:main}   Let $Q$
be a left Ehresmann monoid with proper  basis $H$. Then $Q$ is isomorphic to  a  biunary monoid subsemigroup  $\QQ(T,\X,\Y)$ of some $\mathcal{P}_{\ell}(T,\X)$. 

\end{them}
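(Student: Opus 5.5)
We extend $\theta$ to all of $Q$ using $H$-canonical forms. Define $\Theta:Q\rightarrow \QQ$ by
\[(h_1\dots h_n)\Theta=(h_1\theta)\dots(h_n\theta)\]
whenever $h_1\dots h_n$ is the (unique) $H$-canonical form of an element of $Q$. This is well defined because $H$ is a basis, so $H$-canonical forms exist (Corollary~\ref{cor:short}) and are unique. By Corollary~\ref{cor:nf}, the image is in $H^{\mathcal{Q}}$-canonical form. Since $\QQ$ has $H^{\mathcal{Q}}$-canonical forms (Theorem~\ref{them:Q}) and $\theta$ is a bijection $H\rightarrow H^{\mathcal{Q}}$ (Lemma~\ref{lem:theta}), Corollary~\ref{cor:nf} read in both directions shows that $\Theta$ is a bijection. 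Moreover, $\Theta$ preserves both unary operations. For the $+$ operation, Lemma~\ref{prop:extra}(i) together with Theorem~\ref{them:newform} gives $(m\Theta)^+=(h_1\theta)^+=h_1^+\theta=h_1^+\kappa=m^+\Theta$. For the $*$ operation, Theorem~\ref{prop:total} gives $(m\Theta)^*=(h_n\theta)^*=h_n^*\theta=m^*\Theta$. The identity is also preserved, since $1\Theta=1\kappa=(1,1)^\omega$, which is the identity $1_{\Y}$ of $\QQ$.

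It remains to show that $\Theta$ is multiplicative, and this is the main obstacle. It suffices to prove $(hm)\Theta=(h\theta)(m\Theta)$ for $h\in H$ and $m\in Q$; induction on the canonical length then handles general products. Let $m=h_1\dots h_n$ be in canonical form. The canonical form of $hm$ is given by Lemma~\ref{h+m-Hormal form}, and the canonical form of $(h\theta)(m\Theta)$ in $\QQ$ is given by the same lemma. The case distinctions agree under $\theta$: we have $h_1\in E$ if and only if $h_1\theta\in\Y$, and the comparisons between $h^*h_1^+$ and $h_1^+$ transfer because $\kappa$ is a semilattice embedding and $\theta$ commutes with $^*$ and $^+$. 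Thus multiplicativity reduces to the following single claim:
\begin{quote}
if $h,k\in H$ and $hk\in H$ (arising via (H2) or (H3)), then $(hk)\theta=(h\theta)(k\theta)$.
\end{quote}
The case $hh_1^+$ of Lemma~\ref{h+m-Hormal form} is also an instance of (H2), so it is covered by this claim.

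To verify the claim, write $h\theta=[h](1,h^*)^\omega$ and $k\theta=[k](1,k^*)^\omega$. In $\mathcal{P}_\ell(T,\X)$, the element $(1,h^*)^\omega$ is a projection $h^*\kappa$.

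In case (H3), where $h^*\geq k^+$ and $k\notin E$, we have $h^*\kappa\geq k^+\kappa=(k\theta)^+$. Hence
\[(h\theta)(k\theta)=[h](k\theta)=[h][k](1,k^*)^\omega=[hk](1,(hk)^*)^\omega=(hk)\theta,\]
using $(hk)^*=k^*$.

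In case (H2), where $k=e\in E$, we get
\[(h\theta)(e\kappa)=[h](1,h^*)^\omega(1,e)^\omega=[h](1,h^*e)^\omega.\]
Since $he\,\sigma\, h$ and $(he)^*=h^*e$, this equals $(he)\theta$.

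Finally, $\Theta$ is an isomorphism of $Q$ onto $\QQ(T,\X,\Y)$. By Theorem~\ref{them:Q}, $\QQ(T,\X,\Y)$ is a biunary monoid subsemigroup of $\mathcal{P}_\ell(T,\X)$, which is what the statement requires.
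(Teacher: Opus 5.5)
Your proposal is correct and follows essentially the same route as the paper: you extend $\theta$ via $H$-canonical forms and get bijectivity from Corollary~\ref{cor:nf} together with uniqueness of canonical forms in $Q$ and in $\QQ$. You then prove multiplicativity by using Lemma~\ref{h+m-Hormal form} to reduce to the cases where $hk\in H$ arises from (H2) or (H3), which is exactly the paper's $n=2$ base case followed by the same induction; the only cosmetic differences are that you phrase the induction as $(hm)\Theta=(h\theta)(m\Theta)$ and verify preservation of the identity directly.
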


\begin{proof} Let  $\QQ=\mathcal{Q}(T, \X , \Y)$ and $\theta:H\rightarrow H^{\mathcal{Q}}$ be the bijection defined as above. We extend the domain of $\theta$ to $Q$ by  defining 
\[(h_1\dots h_n)\theta=(h_1\theta)\dots (h_n\theta)\]
where $h_1\dots h_n$ is in $H$-canonical form. Lemma~\ref{lem:theta}  and Corollary~\ref{cor:nf} together with the fact that $H$ is a basis for $Q$ and $H^{\mathcal{Q}}$ is a basis for $\QQ$ give  us that this extended $\theta$ is also a bijection.

To show that $\theta$ is a  semigroup morphism, we proceed by induction to show  that $(h_1\dots h_n)\theta=(h_1\theta)\dots (h_n\theta)$ for any $h_i \in H$, $i = 1, \dots, n$, without the restriction that $h_1\dots h_n$ is in $H$-canonical form.

  Clearly, the statement  holds for $n =1$.  We now prove the case $n=2$, that is, we verify $(h_1h_2)\theta = (h_1\theta)( h_2\theta)$ for any $h_1,h_2\in H$.  By Lemma~\ref{h+m-Hormal form}, there are three  different possibilities when looking for the $H$-canonical form for $h_1h_2$

  (1) Suppose that $h_2\in E$. Then $[h_2]$ is the identity of $\QQ$,  and  by (H2)
we have $(h_1h_2)^* = h_1^*h_2= h_1^* h_2^*$ . Notice that  $$(1, h_1^*)^\omega(1, h_2^*)^\omega =(1, h_1^*h_2^*)^\omega$$ 
since $\kappa$ is a semilattice embedding. 
Hence
\[
\begin{aligned}
(h_1\theta)( h_2\theta) &= [h_1](1, h_1^*)^\omega[h_2](1, h_2^*)^\omega=[h_1](1, h_1^*)^\omega(1, h_2^*)^\omega=[h_1](1, h_1^*h_2^*)^\omega\\
        &=[h_1][h_2](1, h_1^*h_2^*)^\omega=[h_1h_2](1,(h_1h_2)^*)^\omega=(h_1h_2)\theta.\
\end{aligned}
\]

(2)
If $h_2\notin E$  and $h_1^* h_2^+= h_2^+$  then $h_1h_2 \in H$ and $(h_1h_2)^* = h_2^*$ by (H3). In the following have in mind that by Lemma~\ref{lem:htrick}  we  get $([h_2](1, h_2^*)^\omega)^+ = (1,h_2^+)^\omega$, and $\kappa$ is a semilattice embedding. We have   
\[
\begin{aligned}
(h_1h_2)\theta & = [h_1h_2](1, (h_1h_2)^*)^\omega
               =[h_1][h_2](1, h_2^*)^\omega=[h_1]([h_2](1, h_2^*)^\omega)^+[h_2](1, h_2^*)^\omega\\
               &=[h_1](1,h_2^+)^\omega[h_2](1, h_2^*)^\omega=[h_1](1,h_1^*)^\omega(1,h_2^+)^\omega[h_2](1, h_2^*)^\omega\\
               &=[h_1](1,h_1^*)^\omega[h_2](1, h_2^*)^\omega=(h_1\theta )(h_2\theta).
\end{aligned}
\]

(3)  If $h_2 \notin E$ and  $h_1^* h_2^+< h_2^+$,  then  Lemma~\ref{h+m-Hormal form} tells that  $(h_1h_2^+)h_2$ is in $H$-canonical form.  Using  the definition of $\theta$ and the case (1),   we have  \[(h_1h_2)\theta = ((h_1h_2^+)h_2)\theta =((h_1h_2^+)\theta )(h_2\theta)=(h_1\theta)(h_2^+\theta)(h_2\theta)\]
and then 
by Lemma~\ref{lem:theta},
\[(h_1h_2)\theta=(h_1\theta)(h_2\theta)^+(h_2\theta)=(h_1\theta)(h_2\theta).\]

Suppose for induction that  the statement holds  for $n-1$, where $n\geq 2$,  and consider a product
$h_1\dots h_n$, where $h_i\in H$ for $1\leq i\leq n$. 
Let $h_2\dots h_n = k_2\dots k_m$ where $k_2\dots k_m$  is in $H$-canonical form. Then $(h_1h_2\dots h_n)\theta = (h_1k_2\dots k_m)\theta$ and by the induction hypothesis,  $(h_2\dots h_n)\theta=(h_2\theta)\dots (h_n\theta)= (k_2\theta) \dots (k_m\theta)$. Now we look the $H$-canonical form of $h_1h_2\dots h_n=h_1k_2\dots k_m$. 
 
By Lemma~\ref{h+m-Hormal form}, this can be:

(1) $(h_1k_2)k_3\dots k_m$, in which case  
$(h_1h_2\dots h_n)\theta=  (h_1k_2)\theta (k_3\theta) \dots (k_m\theta)=(h_1\theta)( k_2\theta) (k_3\theta) \dots (k_m\theta)=(h_1\theta)(h_2\theta) \dots (h_n\theta)$ using the  $n=2$ case  and we are done too; or finally

(2) $(h_1k_2^+)k_2\dots k_m$, in which case 
$(h_1h_2\dots h_n)\theta=(h_1k_2^+)\theta (k_2\theta )\dots( k_m\theta)=(h_1\theta)(k_2^+\theta) (k_2\theta )\dots( k_m\theta)$ using the  $n=2$ case again. But $k_2^+\theta =(k_2\theta)^+$ so that $(h_1h_2\dots h_n)\theta= (h_1\theta) (k_2\theta )\dots( k_m\theta)=(h_1\theta)(h_2\theta) \dots (h_n\theta)$ as required.

It is now clear that $\theta$ is a semigroup isomorphism and as such  must preserve the identity. To show that $\theta$ respects the unary operation $+$, let $h_1h_2\dots h_n\in Q$ be in $H$-canonical form. By Lemma~\ref{prop} $(i)$, Lemma~\ref{lem:theta} and Corollary~\ref{cor:nf}, we have
\[
(h_1h_2\dots h_n)^+\theta= h_1^+\theta=(h_1\theta)^+=((h_1\theta)( h_2\theta) \dots (h_n\theta))^+= ((h_1h_2\dots h_n)\theta)^+.
\]
The argument that  $\theta$ respects the unary operation $*$ is similar.
\end{proof}

\section{Main theorem and further directions}\label{sec:main}

 We now  draw together the results of previous sections and proceed to point out a number of directions for further research. Note that (3) of Theorem~\ref{them:conclusion}  indicates how to find subsemigroups $\QQ(T,\X,\Y)$ of a $\mathcal{P}_{\ell}(T,\X)$ in terms of the  generating set $H^{\mathcal{P}}$ of  $\mathcal{P}_{\ell}(T,\X)$, rather than from the data given to construct $\QQ(T,\X,\Y)$ in Section~\ref{sec:Q}.

\begin{them}\label{them:conclusion} The following conditions are equivalent for a biunary monoid $Q$:
\begin{enumerate}
\item  $Q$
is  left Ehresmann  and $Q=\langle H\rangle_{(2)}$ for a proper basis $H$;
\item  $Q$ is isomorphic to a biunary monoid subsemigroup $\QQ(T,\X,\Y)$ of some $\mathcal{P}_{\ell}(T,\X)$;
\item $Q$ is isomorphic to a $*$-left Ehresmann monoid subsemigroup $\QQ=\langle H^{\mathcal{Q}}\rangle_{(2)}$ of some $\mathcal{P}_{\ell}(T,\X)$
where $H^{\mathcal{Q}}=\{ h\in H^{\mathcal{P}}: h^+,h^*\in \Y\}=H^{\mathcal{P}}\cap \QQ$ and $H^{\mathcal{Q}}/\sigma^{\mathcal{P}}=\mathcal{P}_{\ell}(T,\X)/\sigma^{\mathcal{P}}$.
\end{enumerate}
\end{them}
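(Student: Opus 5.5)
I would prove the cycle of implications $(1)\Rightarrow(2)\Rightarrow(3)\Rightarrow(1)$, drawing mostly on results already established in the paper.

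\emph{$(1)\Rightarrow(2)$.} This is exactly Theorem~\ref{them:main}. Given a left Ehresmann monoid $Q$ with proper basis $H$, we set $T=Q/\sigma$. The partial action of $T$ on $E$ is strong and order-preserving by Lemma~\ref{stronglypartialaction}. We globalise it via Theorem~\ref{them:slglobal} to obtain $(\kappa,\X,\bullet)$, put $\Y=E\kappa$, and use the isomorphism $\theta$ to identify $Q$ with $\QQ(T,\X,\Y)$. Since $\theta$ preserves $+$ and $*$, it is an isomorphism of biunary monoids.

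\emph{$(2)\Rightarrow(3)$.} By Theorem~\ref{them:Q}, $\QQ=\QQ(T,\X,\Y)$ is a $*$-left Ehresmann biunary monoid subsemigroup of $\mathcal{P}_\ell(T,\X)$. The identity $H^{\mathcal{P}}\cap\QQ=H^{\mathcal{Q}}$ is part of Lemma~\ref{lem:lucky}. For the identity $H^{\mathcal{Q}}=\{h\in H^{\mathcal{P}}:h^+,h^*\in\Y\}$ I use Lemma~\ref{lem:HH}(i): for $h=te$ we have $h^*=e$ and $h^+=t\cdot e$, so the condition reads $e\in\Y$ and $t\cdot e\in\Y$, which is precisely the definition of $H^{\mathcal{Q}}$. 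For the $\sigma$-statement, let $t\in T$. By condition (B) there is $g\in\Y$ with $t\cdot g\in\Y$, so $tg\in H^{\mathcal{Q}}$ and $tg\,\sigma^{\mathcal{P}}\,t$. Since every $\sigma^{\mathcal{P}}$-class contains an element of $T$ (Theorem~\ref{P1}), every class meets $H^{\mathcal{Q}}$, which I read as the meaning of $H^{\mathcal{Q}}/\sigma^{\mathcal{P}}=\mathcal{P}_\ell(T,\X)/\sigma^{\mathcal{P}}$.

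\emph{$(3)\Rightarrow(1)$.} This is the step needing most care: we must recover conditions (A) and (B) on $\Y$ from the hypotheses of (3), and then invoke Theorem~\ref{them:Q}. Since $\QQ$ is a $*$-left Ehresmann submonoid, its projections are the elements $\alpha^+$ for $\alpha\in\QQ$. First I would check that these projections lie in $\QQ\cap\X$ and that $\Y$ should be taken to be $\QQ\cap\X$. Each $e\in\Y$ satisfies $e^+=e^*=e$, so $e\in H^{\mathcal{Q}}$, and hence $\Y$ is a subsemilattice containing the identity of $\QQ$. Condition (B) follows from the $\sigma$-hypothesis: given $t$, pick $h=sf\in H^{\mathcal{Q}}$ with $h\,\sigma^{\mathcal{P}}\,t$. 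Then $s=t$, because distinct elements of $T$ are not $\sigma$-related, and $t\cdot f=h^+\in\Y$.

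For condition (A), take $e\le f$ in $\Y$ with $t\cdot f\in\Y$. Then $tf\in H^{\mathcal{Q}}\subseteq\QQ$, so $tfe=te\in\QQ\cap H^{\mathcal{P}}=H^{\mathcal{Q}}$, and therefore $t\cdot e=(te)^+\in\Y$. Here I use that $\QQ$ is closed under products and that $e\in\QQ$.

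With (A) and (B) in hand, $\QQ=\QQ(T,\X,\Y)$ by definition. Theorem~\ref{them:Q} then shows that $H^{\mathcal{Q}}$ is a proper basis, and this transfers to $Q$ through the isomorphism, giving (1). The main obstacle I expect is the bookkeeping that the subsemilattice $\Y$ in (3) genuinely coincides with the projection semilattice $\X\cap\QQ$, so that (A) and (B) are stated for the correct set. I would settle this using $h^+,h^*\in\Y$ for generators together with Theorem~\ref{them:newform}.
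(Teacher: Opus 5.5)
Your proposal is correct and follows the paper's own argument. The paper proves $(1)\Leftrightarrow(2)$ via Theorems~\ref{them:Q} and~\ref{them:main}, proves $(2)\Rightarrow(3)$ exactly as you do, and proves $(3)\Rightarrow(2)$ by recovering (A) and (B) with the same computations ($tfe=te\in H^{\mathcal{P}}\cap\QQ$, and the $\sigma^{\mathcal{P}}$-representative of $t$ being of the form $te$); your $(3)\Rightarrow(1)$ is just that step followed by Theorem~\ref{them:Q}.

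The bookkeeping you worry about is automatic, with no need for Theorem~\ref{them:newform}. For $e\in\Y$, writing $e=1_Te$ shows $e\in H^{\mathcal{Q}}\subseteq\QQ$. Conversely, $\X\cap\QQ\subseteq H^{\mathcal{P}}\cap\QQ=H^{\mathcal{Q}}$, so each $e\in\X\cap\QQ$ satisfies $e=e^*\in\Y$. Hence $\Y=\X\cap\QQ$.
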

\begin{proof} The equivalence of (1) and (2) is given by  Theorems~\ref{them:Q} and \ref{them:main}.

(2) implies (3). As observed,  $\QQ$  is a $*$-left Ehresmann monoid.   Further,   $\QQ=\langle H^{\mathcal{Q}}\rangle_{(2)}$ and, using Lemma~\ref{lem:HH}, we have
 \[H^{\mathcal{Q}}= \{ te\in H^{\mathcal{P}}: e\in\Y, \, t\cdot e\in \Y\}=\{ h\in H^{\mathcal{P}}: h^*, h^+ \in\Y\}\] 
where $H^{\mathcal{Q}}= H^{\mathcal{P}}\cap \QQ$ from Lemma~\ref{lem:lucky}.
 From Theorem~\ref{P1} we know that $\mathcal{P}_{\ell}(T,\X)/\sigma^{\mathcal{P}}=\overline{T}$,  where $\overline{T}$ consists of the $\sigma^{\mathcal{P}}$-classes of elements of $T$. For any $t\in T$ by condition (B) of Theorem~\ref{them:Q}, there exists a $te\in H^{\mathcal{Q}}$ where $e\in \Y$, and then
$te\,\sigma^{\mathcal{P}}\, t$.

(3) implies (2). Let $\QQ$ be as given. Then the semilattice of projections $\mathcal{Y}$ of $\QQ$ is a monoid subsemilattice of $\X$.
Let $t\in T$. Let $e,f\in \Y$ with $f\leq e$ and suppose that 
$ t\cdot e\in \Y$.  Then $te\in H^{\mathcal{Q}}\subseteq \QQ$, whence  $tef=tf\in \QQ$. Thus $tf\in H^{\mathcal{P}}\cap \QQ=H^{\mathcal{Q}}$, and so $(tf)^+=t\cdot f\in \Y$. Therefore (A) holds. By  the final assumption, $t\, \sigma^{\mathcal{P}}\, h$ for some $h\in H^{\mathcal{Q}}$.
But we must then have $h=te$ for some $e\in\Y$ where $t\cdot e\in \Y$. Thus (B) holds. Hence $\QQ(T,\X,\Y)$ is exactly our $\QQ$ as given.
\end{proof}

We have focused in this article entirely on left Ehresmann {\em monoids}. In many places, the existence of the identity is important in our arguments, not least in the construction of monoids $\mathcal{P}_\ell(T,X)$. However, there is always the device of adjoining an identity that can be employed.

\begin{question}\label{qn:semigroups} Develop a corresponding theory of left Ehresmann semigroups.
\end{question}

A left Ehresmann monoid $M$ acts naturally by order-preserving maps on its semilattice of projections. Insisting that this action is by morphisms corresponds to the identity $(xy^+z^+)^+=(xy)^+(xz)^+$ holding. Left Ehresmann monoids satisfying this identity are called {\em left weakly $E$-hedged} \cite{FGG2}.  If a left Ehresmann monoid $M$ satisfies the quasi-identities
$x^2=x\rightarrow x=x^+$ and $yx=zx\rightarrow yx^+=zx^+$ then it is called {\em left adequate}; a left adequate monoid satisfying
$(xy^+z^+)^+=(xy)^+(xz)^+$ is  called {\em left h-adequate} \cite{F4}.  If the ample identity $xy^+=(xy)^+x$ holds in a left Ehresmann monoid,   then  so does the identity $(xy^+z^+)^+=(xy)^+(xz)^+$. We remark that 
free left h-adequate monoids are determined in \cite{F4}; they do not satisfy the ample identity.

\begin{question}\label{qn:hsemigroups} Develop a corresponding theory for left Ehresmann monoids and semigroups where the natural action on the semilattice of projections is by morphisms.
\end{question}

A theory of (two-sided) Ehresmann semigroups is developed by Kudravtseva and Laan in \cite{KL}, where the authors declare they start from a different viewpoint from ours. 
 They develop a theory of {\em proper} Ehresmann semigroups using a notion of {\em matching factorisations} of what they call {\em proper} elements. As previously indicated, our notion of proper elements is close to theirs.  The  approach of \cite{KL} is based on categories. For an Ehresmann semigroup, the kernel of the unary operation $*$ is a right congruence, whereas in our case it is not. Nevertheless, we ask the following.

\begin{question}\label{qn:2sided} What are the deeper connections of our approach with that of \cite{KL}? 
\end{question}

\end{document}